\documentclass[a4paper]{amsart}
\usepackage{latexsym,bm,stmaryrd}
\usepackage{amsmath,amsthm,amsfonts,amssymb,mathrsfs,pb-diagram}

\usepackage{microtype}
\usepackage{mathtools}
\usepackage{mathbbol,wasysym}

\let\<=\langle
\let\>=\rangle
\def\({\big(}
\def\){\big)}

\def\Z{\mathbb{Z}}
\def\Q{\mathbb{Q}}

\def\N{\mathbb{N}}

\def\lam{\lambda}
\def\Lam{\Lambda}
\def\Sym{\mathfrak{S}}

\newcommand\RR{\mathscr{R}}
\newcommand\R{\RR^{\Lambda}}

\newcommand\HH{\mathscr{H}}

\def\bn[#1,#2]{\begin{bmatrix}#1\\#2\end{bmatrix}}

\def\wnu{\widetilde{\nu}}
\def\fb{\mathbf{b}}
\def\fg{\mathfrak{g}}

\DeclareMathOperator\Hom{Hom}

\DeclareMathOperator\cha{char}

\DeclareMathOperator\id{id}

\DeclareMathOperator\Mod{Mod}
\DeclareMathOperator\rp{Rep}
\DeclareMathOperator\ONH{ONH}
\DeclareMathOperator\skp{SkPol}

\def\rps{\rp_{\rm{super}}}

\def\wk{\tilde{K}}
\def\mcu{\mathcal{U}}
\def\mv{\mathcal{V}}
\def\rd{{\rm{d}}}
\def\rmp{{\rm{p}}}

\def\Mg{\Mod^G}
\def\bk{{\mathbf{k}}}
\def\Ms{\Mod_{\rm{super}}}
\def\Ps{{\rm{Proj}}_{\rm{super}}}
\def\Rs{{\rm{Rep}}_{\rm{super}}}

\begin{document}

\title[]
{graded dimensions and monomial bases for the cyclotomic quiver Hecke superalgebras}
\subjclass[2010]{20C08, 16G99, 06B15}
\keywords{Cyclotomic quiver Hecke superalgebras, supercategorification}
\author{Jun Hu}\address{MIIT Key Laboratory of Mathematical Theory and Computation in Information Security\\
  Beijing Institute of Technology\\
  Beijing, 102488, P.R. China}
\email{junhu404@bit.edu.cn}
\author{Lei Shi}\address{School of Mathematical and Statistics\\
  Beijing Institute of Technology\\
  Beijing, 100081, P.R. China}
\email{3120195738@bit.edu.cn}

\numberwithin{equation}{section}
\newtheorem{prop}[equation]{Proposition}
\newtheorem{thm}[equation]{Theorem}
\newtheorem{thma}[equation]{Theorem A}
\newtheorem{thmb}[equation]{Theorem B}
\newtheorem{thmc}[equation]{Theorem C}
\newtheorem{cor}[equation]{Corollary}
\newtheorem{conj}[equation]{Conjecture}
\newtheorem{hcond}[equation]{Homogeneous Admissible Condition}
\newtheorem{lem}[equation]{Lemma}
\newtheorem{examp}[equation]{Example}
\newtheorem{problem}[equation]{Conjecture}
\newtheorem{ques}[equation]{Question}
\theoremstyle{definition}
\newtheorem{dfn}[equation]{Definition}
\theoremstyle{remark}
\newtheorem{rem}[equation]{Remark}

\begin{abstract} In this paper we derive a closed formula for the $(\Z\times\Z_2)$-graded dimension of the cyclotomic quiver Hecke superalgebra $\RR^\Lam(\beta)$ associated to an {\it arbitrary} Cartan superdatum $(A,P,\Pi,\Pi^\vee)$, polynomials $(Q_{i,j}({\rm x}_1,{\rm x}_2))_{i,j\in I}$, $\beta\in Q_n^+$ and $\Lam\in P^+$. As applications, we obtain a necessary and sufficient condition for which $e(\nu)\neq 0$ in $\RR^\Lam(\beta)$. We construct an explicit monomial basis for the bi-weight space $e(\wnu)\RR^\Lam({\beta})e(\wnu)$, where $\wnu$ is a certain specific $n$-tuple defined in (\ref{wnu0}). In particular, this gives rise to a monomial basis for the cyclotomic odd nilHecke algebra. Finally, we consider the case when $\beta=\alpha_{1}+\alpha_{2}+\cdots+\alpha_{n}$ with $\alpha_1,\cdots,\alpha_n$ distinct. We construct an explicit monomial basis of $\RR^\Lam(\beta)$ and show that it is indecomposable in this case.
\end{abstract}

\maketitle

\section{Introduction}

The quiver Hecke algebras (or, KLR algebras) and their cyclotomic quotients were introduced in the work of Khovanov-Lauda (\cite{KL1}, \cite{KL2}) and of Rouquier (\cite{Rou1}). They play an important role in the categorification of quantum groups and their integrable highest weight modules (\cite{KK}). In recent years there have been many remarkable applications of these algebras in the modular representation theory of symmetric groups and Hecke algebras, low-dimensional topology and other areas, see \cite{BK:GradedKL}, \cite{HM}, \cite{HuMathas:SeminormalQuiver}, \cite{Kle2}, \cite{Rou2}, \cite{SVV}, \cite{VV}, \cite{Web} and the references therein.

In order to generalise the KLR's construction to the super case, several new families of algebras including the quiver Hecke superalgebras and quiver Hecke Clifford algebras were introduced in \cite{KKT}. To define the quiver Hecke superalgebras, one has to start with a generalised Cartan matrix $A$ (\cite{Kac}) parameterized by an index set $I=I_{\rm{even}}\sqcup I_{\rm{odd}}$ subject to some natural conditions. Then one has to manipulate a mixture of the usual nilHecke algebra and the odd nilHecke algebra in \cite{EKL} (or, the spin nilHecke algebra initially studied in \cite{W} and \cite{KW}). That says, to each $i\in I_{\rm{even}}$ is attached the usual nilHecke algebra, while to each $i\in I_{\rm{odd}}$ is attached the odd nilHecke algebra. If $I_{\rm{odd}}=\emptyset$, then the construction in \cite{KKT} reduces to the original KLR construction. The resulting quiver Hecke superalgebras and their cyclotomic quotients are both $(\Z\times\Z_2)$-graded algebras.

In \cite{HS} the authors used Kang-Kashiwara's categorification of integral highest weight modules over quantum groups (\cite{Lu}) to derive a closed formula for the $\Z$-graded dimension of the usual quiver Hecke algebras. In the current paper, we generalise this formula to the super case to derive a closed formula for the $(\Z\times\Z_2)$-graded dimension of the cyclotomic quiver Hecke superalgebra. To state our main results, we need some notations.

Let $\bigl({\rm{A}}=(a_{ij})_{i,j\in I},P,\Pi,\Pi^\vee\bigr)$ be a Cartan superdatum in the sense of \cite[\S4.1]{KKO2}. Let $x,q$ be two indeterminates over $\Z$. Consider the quotient ring $\Z[x]/\<x^4-1\>$. We define $$
\pi:=x^2+\<x^4-1\>,\quad\, \sqrt{\pi}:=x+\<x^4-1\> .
$$
Then $\Z[x]/\<x^4-1\>=\Z[\sqrt{\pi}]$. For any ring $R$, we set $R^\pi:=R\otimes_{\Z}\Z[\pi]$.


Let $\RR^\Lam(\beta)$ be the cyclotomic quiver Hecke superalgebra over a field $\mathbf{k}$ associated to the Cartan superdatum $\bigl({\rm{A}}=(a_{ij})_{i,j\in I},P,\Pi,\Pi^\vee\bigr)$,  a family of polynomials $\{Q_{ij}({\rm x}_1,{\rm x}_2)|i,j\in I\}$, $\beta\in Q_n^+$ and $\Lam\in P^+$ as defined in \cite{KKT}. Kang, Kashiwara and Oh (\cite{KKO1}, \cite{KKO2}) studied the supercategorification of quantum Kac-Moody algebras via the cyclotomic quiver Hecke superalgebra (see \cite{HW} for a closely related work). To this end they have introduced in \cite{KKO2} several families of quantum superalgebras (also see \cite{BKM}). Among them the algebra $\mathcal{U}(\fg)$ over $\Q(q)^\pi$ corresponding to the Cartan superdatum $\bigl({\rm{A}},P,\Pi,\Pi^\vee\bigr)$ (introduced in \cite{KKO2}) is directly related with the supercategorification theorem via the cyclotomic quiver Hecke superalgebras $\RR^\Lam(\beta)$. More precisely, let $\mv(\Lam)$ be the $P$-weighted $\mathcal{U}_q(\mathfrak{g})$-module generated by its vector $v_\Lam$ of weight $\Lam$ as defined in \cite[(5.4)]{KKO2}. Let $\mv_{\mathbb{A}^\pi}(\Lam):=\mcu_{\mathbb{A}^\pi}(\fg)v_\Lam$ be the $\mathbb{A}^\pi$-form of $\mv(\Lam)$, where $\mathbb{A}^\pi:=\Z[q,q^{-1}]^\pi$. They showed in \cite[Theorem 8.14]{KKO2} that the category $\Ps(\R(\beta))$ of $(\Z\times\Z_2)$-graded projective $\R(\beta)$-modules finitely dimensional over $\bk$ gives a supercategorification of $\mv_{\mathbb{A}^\pi}(\Lam)$. In particular, there is a $\mcu_{\mathbb{A}^\pi}(\fg)$-module isomorphism between the Grothendieck group $[\Ps(\RR^\Lam)]$ and $\mv_{\mathbb{A}^\pi}(\Lam)$.

Let $\Rs\R(\beta)$ be the category of $(\Z\times\Z_2)$-graded $\R(\beta)$-modules finitely dimensional over $\bk$.

\begin{dfn} For any $M=\oplus_{a\in\Z}(M_{a,\overline{0}}\oplus M_{a,\overline{1}})\in\rps(\R(\beta))$, we define its $(q,\pi)$-dimension as: $$
\dim_q^{\pi}(M):=\sum_{a\in\Z}\bigl(\dim_{\bk}M_{a,\overline{0}}+\pi\dim_{\bk}M_{a,\overline{1}}\bigr)q^a\in\Z[q^{\pm1}]^\pi .
$$
\end{dfn}

The following theorem, which generalize \cite[Theorem 1.1]{HS} in the non-super case, is the first main result of this paper, where we refer the readers to Section 2, (\ref{Gauss2}), Definition \ref{keydfn1} and (\ref{pdef}) for unexplained notations used here.

\begin{thm}\label{mainthmA} Let $\Lam\in P^+$ and $\beta\in Q_n^+$. Let $\nu=(\nu_1,\cdots,\nu_n),\nu'=(\nu'_1,\cdots,\nu'_n)\in I^\beta$. Then $$
\dim_q^{\pi}e(\nu)\RR^\Lam(\beta)e(\nu')=\sum_{\substack{w\in\Sym(\nu,\nu')}}\prod_{t=1}^{n}\Bigl([N^{\Lam}(w,\nu,t)]^\pi_{\nu_t}
q_{\nu_t}^{N^{\Lam}(1,\nu,t)-1}\pi_{\nu_t}^{\rmp(w,\nu,t)}\Bigr).
$$
\end{thm}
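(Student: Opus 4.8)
The plan is to follow the strategy of \cite{HS} in the non-super case, upgrading every step to keep track of the extra $\Z_2$-grading and the resulting powers of $\pi$. The starting point is the supercategorification theorem of Kang--Kashiwara--Oh: the Grothendieck group $[\Ps(\R)]$ is isomorphic, as a $\mcu_{\A^\pi}(\fg)$-module, to $\mv_{\A^\pi}(\Lam)$, and under this isomorphism the class of the projective module $\R(\beta)e(\nu)$ corresponds (up to a grading shift that must be pinned down carefully) to the monomial $f_{\nu_n}\cdots f_{\nu_1}v_\Lam$ acting in $\mv_{\A^\pi}(\Lam)$. Consequently $\dim_q^\pi e(\nu)\R(\beta)e(\nu')$ is computed as a $(q,\pi)$-bilinear pairing $\langle f_{\nu'_n}\cdots f_{\nu'_1}v_\Lam,\, f_{\nu_n}\cdots f_{\nu_1}v_\Lam\rangle$ with respect to the Shapovalov-type form on $\mv(\Lam)$ normalized so that $\langle v_\Lam,v_\Lam\rangle=1$. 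So the first step is to set up this pairing precisely in the super setting and reduce the theorem to an identity purely inside the quantum superalgebra module $\mv_{\A^\pi}(\Lam)$.

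The second step is to compute that pairing by repeatedly commuting the lowering operators $f_i$ past the raising operators $e_j$ using the super analogue of the $[e_i,f_j]=\delta_{ij}\frac{K_i-K_i^{-1}}{q_i-q_i^{-1}}$ relation in $\mcu_q(\fg)$, which in the $\pi$-deformed setting carries signs/powers of $\pi$ governed by the parities of $i\in I_{\mathrm{odd}}$. Each time an $f_i$ meets a matching $e_i$ it produces a factor $[\langle h_i,\Lam-\text{(weight so far)}\rangle]_{q_i}$ times a power of $q_i$ and a power of $\pi_i$; bookkeeping all the ways the indices can be matched up is exactly what produces the sum over $w\in\Sym(\nu,\nu')$, and the accumulated exponents are what the paper has packaged into the quantities $N^\Lam(w,\nu,t)$ and $\rmp(w,\nu,t)$. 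Thus I would prove by induction on $n$ that the pairing equals the claimed product, verifying the base case $n=0$ (both sides equal $1$) and then isolating, in the inductive step, the contribution of $f_{\nu_1}$ (equivalently $e_{\nu_1}$ on the other side): commuting it through yields $N^\Lam(1,\nu,t)$-type factors from the Cartan part and a sum over which later slot it cancels against, and one checks that after reindexing this matches the combinatorics in Definition \ref{keydfn1}. The definitions of $[N]^\pi_i$, $q_i$, $\pi_i$ in Section 2 and (\ref{pdef}) should be tailored to make this match work verbatim.

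The main obstacle is the careful sign/parity bookkeeping: in the super world the quantum Serre-type commutation relations and the module structure introduce $\pi$'s (the formal square $\sqrt{\pi}$ lives in $\Z[x]/\langle x^4-1\rangle$), and one must verify that every such factor ends up exactly as $\pi_{\nu_t}^{\rmp(w,\nu,t)}$ with the correct total exponent $\rmp(w,\nu,t)$ — in particular that the $\Z_2$-degree of each basis element of $e(\nu)\R(\beta)e(\nu')$ is recorded correctly, since an error of a single power of $\pi$ is invisible to the ordinary dimension but fatal here. A secondary subtlety is the grading-shift normalization in the identification of $[\R(\beta)e(\nu)]$ with its image in $\mv_{\A^\pi}(\Lam)$: one needs the precise convention from \cite[\S8]{KKO2} so that the exponent $N^\Lam(1,\nu,t)-1$ of $q_{\nu_t}$ comes out right rather than shifted. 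Once these normalizations are fixed, the induction itself is a routine, if lengthy, manipulation of $q$-integers and $\pi$-powers, entirely parallel to the non-super computation in \cite{HS}.
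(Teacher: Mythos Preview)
Your proposal is correct and follows essentially the same route as the paper: reduce via Kang--Kashiwara--Oh supercategorification to the identity $e_{\nu_1}\cdots e_{\nu_n}f_{\nu'_n}\cdots f_{\nu'_1}v_\Lam=\bigl(\dim_q^\pi e(\nu)\R(\beta)e(\nu')\bigr)v_\Lam$, then prove the claimed formula by induction on $n$ using the $e_if_j$ commutation relation in $\mcu(\fg)$ and tracking the $\pi$-powers. Two small discrepancies: the paper does not phrase this as a Shapovalov pairing but works directly with the scalar in front of $v_\Lam$ (your version is equivalent), and in the inductive step the paper peels off the \emph{innermost} raising operator $e_{\nu_n}$ rather than $e_{\nu_1}$---this is the natural choice since $e_{\nu_n}$ is the one adjacent to the $f$'s, and it is what makes the recursion clean (Lemma~\ref{action2} in the paper).
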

The proof of the above theorem is based on Kang-Kashiwara-Oh's supercategorification (\cite[\S8.4]{KKO2}) of $\mv_{\mathbb{A}^\pi}(\Lam)$ via the cyclotomic quiver Hecke superalgebras $\RR^\Lam(\beta)$.
In particular, the above theorem implies that the $\Z$-graded dimension and the (ungraded) dimension of $\RR^\Lam(\beta)$ do not depend on the choices of the decomposition $I=I_{\rm{even}}\sqcup I_{\rm{odd}}$. As a first application of Theorem \ref{mainthmA},
we obtain the following criterion for $e(\nu)\neq 0$ in $\RR^\Lam(\beta)$ which is valid for {\it arbitrary} Cartan superdatum.

\begin{cor}\label{maincorA1} Let $\Lam\in P^+$, $\beta\in Q^+$ and $\nu=(\nu_1,\cdots,\nu_n)\in I^\beta$. Then $e(\nu)\neq 0$ in $\RR^\Lam(\beta)$ if and only if $$
\sum\limits_{w\in\Sym(\nu,\nu)}\prod\limits_{t=1}^{n}N^\Lam(w,\nu,t)\neq 0 .
$$
\end{cor}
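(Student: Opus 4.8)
The plan is to derive Corollary \ref{maincorA1} directly from Theorem \ref{mainthmA} by specializing $\nu'=\nu$ and tracking which terms can vanish. First I would write down the graded dimension formula
\[
\dim_q^{\pi}e(\nu)\RR^\Lam(\beta)e(\nu)=\sum_{w\in\Sym(\nu,\nu)}\prod_{t=1}^{n}\Bigl([N^{\Lam}(w,\nu,t)]^\pi_{\nu_t}\,q_{\nu_t}^{N^{\Lam}(1,\nu,t)-1}\,\pi_{\nu_t}^{\rmp(w,\nu,t)}\Bigr),
\]
and observe that $e(\nu)\neq 0$ in $\RR^\Lam(\beta)$ if and only if this Laurent polynomial in $q$ (with coefficients in $\Z[\pi]$, hence ultimately in $\Z$ after the specialization $\pi\mapsto 1$, or by looking at $q$-degrees) is nonzero. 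The key algebraic point is that each quantum integer $[N^{\Lam}(w,\nu,t)]^\pi_{\nu_t}$ is, up to an invertible monomial in $q$ and $\pi$, a sum of the form $1+(\text{positive powers})$ when $N^{\Lam}(w,\nu,t)>0$, and is $0$ precisely when $N^{\Lam}(w,\nu,t)=0$; in particular $[m]^\pi_{\nu_t}=0\iff m=0$ for the relevant non-negative integers $m$ (here one must recall from Section 2 / (\ref{Gauss2}) that $N^{\Lam}(w,\nu,t)\ge 0$ always, so no cancellation inside a single bracket occurs).

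The main step is then a positivity/non-cancellation argument across the sum over $w$. Each summand $\prod_{t=1}^{n}\bigl([N^{\Lam}(w,\nu,t)]^\pi_{\nu_t}q_{\nu_t}^{N^{\Lam}(1,\nu,t)-1}\pi_{\nu_t}^{\rmp(w,\nu,t)}\bigr)$ is, whenever all the $N^{\Lam}(w,\nu,t)$ are nonzero, a nonzero element of $\Z_{\ge 0}[q^{\pm1}]^\pi$ — a Laurent polynomial with non-negative coefficients — because quantum integers $[m]_{\nu_t}$ have non-negative coefficients and the prefactors are monomials with coefficient $1$. Summing finitely many such non-negative-coefficient Laurent polynomials can never produce $0$ unless every summand is $0$. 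Hence the whole sum is nonzero if and only if at least one $w\in\Sym(\nu,\nu)$ contributes a nonzero summand, which happens if and only if $\prod_{t=1}^{n}N^{\Lam}(w,\nu,t)\neq 0$ for that $w$; equivalently $\sum_{w\in\Sym(\nu,\nu)}\prod_{t=1}^{n}N^{\Lam}(w,\nu,t)\neq 0$, since this latter sum is itself a sum of non-negative integers and vanishes exactly when each term does.

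I expect the main obstacle to be purely bookkeeping: one must confirm from the definitions in Section 2 and (\ref{pdef}) that (i) $N^{\Lam}(w,\nu,t)\ge 0$ for all $w,\nu,t$, so that the quantum brackets genuinely have non-negative coefficients and vanish only in the obvious way, and (ii) the powers $\pi_{\nu_t}^{\rmp(w,\nu,t)}$ and $q_{\nu_t}$-monomials are honest units in $\Z[q^{\pm1}]^\pi$ with coefficient $+1$ (here one uses that $\pi$ is a unit in $\Z[\pi]=\Z[x]/\langle x^4-1\rangle$ with $\pi^2=1$, and that $q_{\nu_t}=q^{(\alpha_{\nu_t},\alpha_{\nu_t})/2}$ is a unit). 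Granting these two facts, the equivalence is immediate: the nonvanishing of a sum of non-negative-coefficient Laurent polynomials is detected termwise, and each term's nonvanishing is detected by the product of the corresponding integers $N^{\Lam}(w,\nu,t)$. One subtlety worth a sentence in the writeup is to note that different $w$ may contribute monomials in overlapping $q$-degrees, but since all coefficients are non-negative there is still no cancellation, so the argument goes through without needing to separate degrees.
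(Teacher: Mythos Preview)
Your argument has a genuine gap at step (i): the claim that $N^{\Lam}(w,\nu,t)\ge 0$ for all $w,\nu,t$ is \emph{false}. For instance, take $I=\{0\}$, $\Lam=\Lam_0$, $\beta=2\alpha_0$, $\nu=(0,0)$. Then for $w=1$ one has $J_1^{<2}=\{1\}$ and
\[
N^\Lam(1,\nu,2)=\langle h_0,\Lam-\alpha_0\rangle=1-2=-1.
\]
In this example $\Sym(\nu,\nu)=\Sym_2$, the term for $w=1$ contributes $1\cdot(-1)=-1$, the term for $w=s_1$ contributes $1\cdot 1=1$, and the two cancel to give $0$, consistent with $\ONH_2^1=0$. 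So negative factors and genuine cancellation among the summands do occur, and your ``sum of non-negative-coefficient Laurent polynomials'' argument collapses: once $N^{\Lam}(w,\nu,t)<0$ the quantum bracket $[N^{\Lam}(w,\nu,t)]^\pi_{\nu_t}$ picks up a sign, and different $w$ can cancel against each other.

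The paper's route avoids this entirely. Rather than arguing termwise positivity in the $(q,\pi)$-graded formula, it first specializes $q\mapsto 1$, $\pi\mapsto 1$ in Theorem~\ref{mainthmA} to obtain the \emph{ungraded} dimension formula (Corollary~\ref{smaedimcor}):
\[
\dim_{\bk} e(\nu)\RR^\Lam(\beta)e(\nu)=\sum_{w\in\Sym(\nu,\nu)}\prod_{t=1}^{n}N^\Lam(w,\nu,t).
\]
Now the left side is simply the dimension of a vector space, so $e(\nu)\neq 0$ iff that dimension is nonzero, i.e.\ iff the right-hand integer is nonzero. No positivity of individual summands is needed; the identity itself does all the work. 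Your proof can be repaired in exactly this way: drop the non-cancellation argument and instead specialize to the ungraded dimension.
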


Our second application of Theorem \ref{mainthmA} is about the monomial bases of $\RR^\Lam(\beta)$. We fix $p\in\N$, $\fb:=(b_1,\cdots,b_p)\in\N^p$ and $\nu^1,\cdots,\nu^p\in I$ such that $\nu^i\neq\nu^j$ for any $1\leq i\neq j\leq p$ and $\sum_{i=1}^{p}b_i=n$. We define \begin{equation}\label{wnu0}
\wnu=(\wnu_1,\cdots,\wnu_n):=\bigl(\underbrace{\nu^1,\cdots,\nu^1}_{\text{$b_1$ copies}},\cdots,\underbrace{\nu^p,\cdots,\nu^p}_{\text{$b_p$ copies}}\bigr)\in I^\beta ,
\end{equation}
where $\beta=\sum_{i=1}^{p}b_i\alpha_{\nu^i}\in Q_n^+$. Set $b_0:=0, c_t:=\sum_{i=0}^{t}b_i$ for any $0\leq t\leq p$. The following theorem is the second main result of this paper, where we refer the readers to (\ref{sb}) and (\ref{nwvk}) for the definitions of $\Sym_{\fb}$ and $N^\Lam(\wnu,k)$.

\begin{thm}\label{mainthmB} Let $\Lam\in P^+$. Let $\wnu$ be given as in (\ref{wnu0}). Let $\beta\in Q_n^+$ such that $\wnu\in I^\beta$.
The following set \begin{equation}\label{base1B}
\Bigl\{e(\wnu)\prod_{k=1}^{n}x_{k}^{r_{k}}\tau_{w }\Bigm|\begin{matrix}\text{$w\in\Sym_{\fb}$, for any $1\leq i\leq p$, $c_{i-1}<k\leq c_{i}$,}\\
\text{$0\leq r_{k}<N^\Lam(\wnu,k)$}\end{matrix}\Bigr\}
\end{equation} forms a $K$-basis of $e(\wnu)\RR^\Lam({\beta})e(\wnu)$.
\end{thm}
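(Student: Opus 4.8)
The plan is to prove Theorem \ref{mainthmB} in two steps: first establish that the set \eqref{base1B} spans $e(\wnu)\RR^\Lam(\beta)e(\wnu)$, then show the cardinality of \eqref{base1B} equals the dimension given by Theorem \ref{mainthmA}, so that a spanning set of the right size must be a basis. For the spanning statement, I would start from the general presentation of the (non-cyclotomic) quiver Hecke superalgebra $\RR(\beta)$: every element of $e(\wnu)\RR(\beta)e(\wnu)$ is a $\bk$-linear combination of monomials $e(\wnu)x_1^{a_1}\cdots x_n^{a_n}\tau_{i_1}\cdots\tau_{i_\ell}$. Using the defining braid-like and commutation relations among the $\tau$'s and $x$'s, together with the fact that $\wnu$ has the special block form \eqref{wnu0} with $\nu^i$ pairwise distinct, I would argue that one may always straighten such a word so that the $\tau$-part is $\tau_w$ for a reduced expression of some $w$, and moreover $w$ must lie in $\Sym_\fb$ (the Young-subgroup-type set defined at \eqref{sb}) since any $\tau_k$ with $\wnu_k\neq\wnu_{k+1}$ is forced by the quadratic relation $\tau_k^2 = Q_{\wnu_k,\wnu_{k+1}}(x_k,x_{k+1})$ to be absorbed into the polynomial part (because $\nu^i\neq\nu^j$ makes the relevant $Q_{ij}$ behave like the ``separating'' case). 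This reduces a general word to the form $e(\wnu)(\text{polynomial in }x_1,\dots,x_n)\tau_w$ with $w\in\Sym_\fb$.

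The second ingredient for spanning is to bound the exponents $r_k$. Here I would invoke the cyclotomic relations: the defining ideal of $\RR^\Lam(\beta)$ includes $x_1^{\langle\Lam,\alpha_{\wnu_1}^\vee\rangle}e(\wnu)=0$, and then use the standard inductive argument (as in the non-super KLR setting, e.g. \cite{KK}) moving this relation through the $\tau$'s to deduce that in $e(\wnu)\RR^\Lam(\beta)e(\wnu)$ each $x_k^{N^\Lam(\wnu,k)}$ can be rewritten in terms of lower powers of $x_k$ times other standard monomials; the quantity $N^\Lam(\wnu,k)$ defined at \eqref{nwvk} is precisely the bound that emerges, counting the contribution $\langle\Lam,\alpha_{\wnu^i}^\vee\rangle$ plus correction terms from the earlier blocks via the Cartan matrix. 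Combining the $\tau$-straightening with this exponent bound shows \eqref{base1B} spans $e(\wnu)\RR^\Lam(\beta)e(\wnu)$.

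For the dimension count I would apply Theorem \ref{mainthmA} with $\nu=\nu'=\wnu$. Because $\wnu$ has the block form, $\Sym(\wnu,\wnu)$ is exactly $\Sym_\fb$ (a product of symmetric groups $\Sym_{b_1}\times\cdots\times\Sym_{b_p}$), and for each $w\in\Sym_\fb$ and each $t$ the integers $N^\Lam(w,\wnu,t)$ and the parity exponents simplify: crossings within a block of equal entries $\nu^i$ contribute the odd/even structure of the relevant (odd) nilHecke algebra, while $N^\Lam(1,\wnu,t)$ reduces to $N^\Lam(\wnu,t)$. One then checks that $\sum_{w\in\Sym_\fb}\prod_t\bigl([N^\Lam(w,\wnu,t)]^\pi_{\wnu_t}q_{\wnu_t}^{N^\Lam(1,\wnu,t)-1}\pi_{\wnu_t}^{\rmp(w,\wnu,t)}\bigr)$, after specialising $q=\pi=1$ to get the ungraded dimension, equals $|\Sym_\fb|\cdot\prod_{i=1}^p\prod_{c_{i-1}<k\le c_i}N^\Lam(\wnu,k)$, which is exactly the cardinality of \eqref{base1B}. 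Since \eqref{base1B} spans a space of this dimension, it is a basis.

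I expect the main obstacle to be the $\tau$-straightening step over the super base: unlike the classical nilHecke case, the odd nilHecke relations introduce signs, and one must be careful that pushing $\tau_w$ past polynomials in the $x_k$'s — and in particular re-expressing non-reduced or ``out-of-$\Sym_\fb$'' words — does not generate monomials outside the proposed set. The key technical point making this work is that $\nu^1,\dots,\nu^p$ are pairwise distinct, so whenever two adjacent strands carry different colours the quadratic relation lets us eliminate the crossing entirely (no genuinely new $\tau$-monomials appear), confining all combinatorial complexity to the within-block odd nilHecke computations, where the bound $N^\Lam(\wnu,k)$ on $x$-exponents is exactly tight.
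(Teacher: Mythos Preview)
Your overall strategy---show the set \eqref{base1B} spans and then match its cardinality against the dimension furnished by Theorem~\ref{mainthmA}---is exactly the paper's. Two points deserve correction, however.

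First, the restriction to $w\in\Sym_{\fb}$ is simpler than you suggest: by Proposition~\ref{stdBasis}, $e(\wnu)\RR(\beta)e(\wnu)$ already has a basis of monomials $e(\wnu)x^{\mathbf a}\tau_w$ with $w\in\Sym_n$, and since $\tau_w e(\wnu)=e(w\wnu)\tau_w$, the idempotent sandwich kills every $w\notin\Sym(\wnu,\wnu)=\Sym_{\fb}$ directly. No appeal to the quadratic relation is needed, and a single $\tau_k$ with $\wnu_k\neq\wnu_{k+1}$ is not ``absorbed'' by $\tau_k^2=Q_{\wnu_k,\wnu_{k+1}}$. Also, $N^\Lam(1,\wnu,t)$ does \emph{not} reduce to $N^\Lam(\wnu,t)$: for $c_{i-1}<t\le c_i$ one has $N^\Lam(1,\wnu,t)=N_i^\Lam(\wnu)-2(t-c_{i-1}-1)$ while $N^\Lam(\wnu,t)=N_i^\Lam(\wnu)-(t-c_{i-1}-1)$. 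The ungraded dimension count still comes out correctly (it is Corollary~\ref{symdim}), but your description of it is inaccurate.

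Second, and more substantively, the exponent-bound step is where the actual work lies, and your appeal to ``the standard inductive argument (as in the non-super KLR setting)'' is precisely what the paper warns against: the introduction states explicitly that the argument from \cite{HS} for the non-super case does not transfer, and a new approach is required. What the paper actually does is construct, for each block $i$ and each position $c_{i-1}<k\le c_i$, an explicit relation
\[
\sum_{\underline{e}} q_{\underline{e},k}\,\tau_{\underline{\widetilde{S}_{k-1}}^{\underline{e}}}=0\quad\text{in }e(\wnu)\RR^\Lam(\beta)e(\wnu),
\]
in which the $\underline{e}=(0,\dots,0)$ term is a skew polynomial in $x_k$ of degree exactly $N^\Lam(\wnu,k)$ with invertible leading coefficient (Lemmas~\ref{annihilator poly} and~\ref{keylem3}). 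The starting point is not the raw cyclotomic relation on $x_1$ but the element $\tau_{c_{i-1}}\cdots\tau_1\,a^\Lam_{\nu^i}(x_1)e(\widehat\nu)\,\tau_1\cdots\tau_{c_{i-1}}$, which yields an annihilator that is a degree-$N_i^\Lam(\wnu)$ polynomial in $x_{c_{i-1}+1}$; one then pushes into the block by left-multiplying by successive $\tau_k$'s, tracking degrees and invertibility of leading coefficients through the skew-commutation. The spanning argument is then a triple induction (upward on the variable index $m$, downward on $\ell(w)$, upward on the exponent $t_m$), using these relations together with Lemma~\ref{multiply} to guarantee that the $\tau$-correction terms land at strictly longer length. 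Your sketch identifies the right shape of the argument but does not supply this construction, and that construction is the genuine content of the proof.
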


We remark that the above theorem is a {\it non-trivial} generalization of the corresponding result \cite[Theorem 5.8]{HS} for the non-super case. This is because the original argument in the proof of \cite[Theorem 5.8]{HS} actually does not work in the super case so we have to adopt a completely different approach to prove Theorem \ref{mainthmB}. Also due to the complexity of its super structure, we are currently unable to generalise \cite[Theorem 1.5]{HS} in its full generality to the super case.

The bi-weight space $e(\wnu)\RR^\Lam({\beta})e(\wnu)$ which we considered in Theorem \ref{mainthmB} contains the following special case: $$
\text{$p=1$ and $\nu^1\in I_{\rm{odd}}$, i.e., the cyclotomic odd nilHecke algebra case.}
$$
Thus Theorem \ref{mainthmB} yields a monomial basis for the cyclotomic odd nilHecke algebra $\ONH_n^\ell$. That is,

\begin{cor}\label{maincor1} Let $\ell,n\in\Z_{\geq 0}$. \begin{enumerate}
\item[1)] Let $\ell,n\in\Z_{\geq 0}$. Then $\ONH_n^\ell\neq 0$ if and only if $\ell\geq n$;
\item[2)] Assume $\ell\geq n$. Then the following set \begin{equation}\label{bases1C}
\Sigma:=\{x_{1}^{k_1}\cdots x_{n}^{k_n}\tau_w\,|\,w\in\Sym_n,\,\,0\leq k_i\leq\ell-i,\forall\,1\leq i\leq n\}
\end{equation} forms a $\bk$-basis of $\ONH_n^\ell$.
\end{enumerate}
\end{cor}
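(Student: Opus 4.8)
The plan is to obtain Corollary~\ref{maincor1} as the one-colour specialization of Theorem~\ref{mainthmB}, with Corollary~\ref{maincorA1} as an alternative route to the vanishing criterion. Fix $i\in I_{\rm{odd}}$ and $\ell\in\Z_{\geq 0}$, and choose $\Lam\in P^+$ with $\langle h_i,\Lam\rangle=\ell$. With $p=1$, $b_1=n$ and $\nu^1=i$ in the notation of (\ref{wnu0}) one has $\beta=n\alpha_i\in Q_n^+$ and $\wnu=(i,\dots,i)$. Since $Q_{ii}=0$, the algebra $\RR^\Lam(n\alpha_i)$ built in \cite{KKT} from the one-vertex datum $I=I_{\rm{odd}}=\{i\}$ is, by construction (cf.\ the discussion of the odd nilHecke algebra of \cite{EKL} in the Introduction), precisely the level-$\ell$ cyclotomic odd nilHecke algebra $\ONH_n^\ell$, the generators $x_1,\dots,x_n$ and $\tau_w$ ($w\in\Sym_n$) corresponding on the two sides. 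As $I^\beta=\{\wnu\}$ is a singleton we have $e(\wnu)=1$, hence $e(\wnu)\RR^\Lam(\beta)e(\wnu)=\RR^\Lam(\beta)=\ONH_n^\ell$.

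For part 2) it then suffices to evaluate the two ingredients of the basis (\ref{base1B}) in this case. By (\ref{sb}) the set $\Sym_{\fb}$ is the Young subgroup $\Sym_{b_1}\times\dots\times\Sym_{b_p}$ of $\Sym_n$ (equivalently, the stabiliser of $\wnu$), which for $p=1$ is the whole $\Sym_n$; and since every entry of $\wnu$ equals $i$, the number $N^\Lam(\wnu,k)$ of (\ref{nwvk}) unwinds to $\langle h_i,\Lam\rangle-(k-1)=\ell-k+1$ for $1\leq k\leq n$. Substituting these into Theorem~\ref{mainthmB} and using $e(\wnu)=1$, the constraint $0\leq r_k<N^\Lam(\wnu,k)$ becomes $0\leq r_k\leq\ell-k$, so the basis (\ref{base1B}) is exactly $\{x_1^{k_1}\cdots x_n^{k_n}\tau_w\mid w\in\Sym_n,\ 0\leq k_i\leq\ell-i\ \text{for }1\leq i\leq n\}$, which is the set $\Sigma$ of (\ref{bases1C}); the hypothesis $\ell\geq n$ is precisely what makes every bound $N^\Lam(\wnu,k)=\ell-k+1$ positive, so that no index set collapses. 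This proves 2).

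For part 1) I keep the same $\Lam$ and $\beta$ but allow arbitrary $\ell\geq 0$. Theorem~\ref{mainthmB} exhibits a basis of $\ONH_n^\ell=e(\wnu)\RR^\Lam(\beta)e(\wnu)$ indexed by the pairs $\bigl(w,(r_k)_{k=1}^n\bigr)$ with $w\in\Sym_n$ and $0\leq r_k<N^\Lam(\wnu,k)=\ell-k+1$; such a pair exists if and only if $\ell-k+1\geq 1$ for every $1\leq k\leq n$, i.e.\ if and only if $\ell\geq n$. When $\ell<n$ the indexing set is empty, so $\ONH_n^\ell=0$; when $\ell\geq n$ it contains $\bigl(1,(0,\dots,0)\bigr)$, giving the nonzero element $1=e(\wnu)$. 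This proves 1). (Alternatively one feeds this into Corollary~\ref{maincorA1}: $\ONH_n^\ell\neq 0$ iff $\sum_{w\in\Sym_n}\prod_{t=1}^n N^\Lam(w,\wnu,t)\neq 0$, and one checks that in the one-colour case each factor equals $\ell-t+1$ irrespective of $w$, so the sum is $n!\,\ell(\ell-1)\cdots(\ell-n+1)$, which vanishes exactly when $\ell<n$ since then the factor at $t=\ell+1$ is zero.)

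The only step that is more than a mechanical unravelling of definitions is the identification $\RR^\Lam(n\alpha_i)\cong\ONH_n^\ell$ in the first paragraph: one must check that the cyclotomic relation of \cite{KKT} specialises, for $\beta=n\alpha_i$, to $x_1^{\langle h_i,\Lam\rangle}e(\wnu)=0$, and that the signs appearing in the \cite{KKT} relations for a single odd vertex agree with those of the odd nilHecke algebra of \cite{EKL}, so that the monomials $x_1^{k_1}\cdots x_n^{k_n}\tau_w$ furnished by Theorem~\ref{mainthmB} become literally the standard odd nilHecke monomials of (\ref{bases1C}). Once this bookkeeping is settled, the computations of $\Sym_{\fb}$ and of $N^\Lam(\wnu,k)$ in the one-colour case are immediate, and I expect no further obstacle.
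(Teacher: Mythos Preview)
Your main argument---specialising Theorem~\ref{mainthmB} to the one-colour odd case---is correct and is exactly the route the paper announces in the Introduction, where Corollary~\ref{maincor1} is presented as the $p=1$, $\nu^1\in I_{\rm{odd}}$ instance of Theorem~\ref{mainthmB}. In Section~4, however, the paper reverses the order: it first gives a direct self-contained proof of Corollary~\ref{maincor1} (the nonvanishing criterion via the dimension formula of Corollary~\ref{smaedimcor}, then a spanning argument driven by the inductive Lemma~\ref{keylem2}), and only afterwards proves Theorem~\ref{mainthmB} (as Theorem~\ref{mainthm3}) by the same style of argument adapted to general~$\wnu$. Your top-down deduction is the clean way once Theorem~\ref{mainthmB} is in hand, while the paper's bottom-up treatment serves as a warm-up that exposes the mechanism of the general proof; neither route is circular, since the proof of Theorem~\ref{mainthm3} uses only Corollary~\ref{symdim}, Lemma~\ref{annihilator poly} and Lemma~\ref{keylem3}, none of which invoke Corollary~\ref{maincor1}.

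Your parenthetical alternative for part~1) contains a slip: in the one-colour case
\[
N^\Lam(w,\wnu,t)=\langle h_i,\Lam\rangle-\langle h_i,\alpha_i\rangle\,\lvert J_w^{<t}\rvert=\ell-2\lvert J_w^{<t}\rvert,
\]
which genuinely depends on $w$ (for $w=1$ it is $\ell-2(t-1)$, not $\ell-t+1$). The identity $\sum_{w\in\Sym_n}\prod_{t=1}^n N^\Lam(w,\wnu,t)=n!\,\ell(\ell-1)\cdots(\ell-n+1)$ you use is nevertheless true---it is precisely Corollary~\ref{symdim} (equivalently \cite[Corollary~3.22]{HS})---but not because each summand already equals $\prod_t(\ell-t+1)$. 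This does not affect your main proof.
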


The readers can find that the above corollary is an analogue of the corresponding result \cite[Theorem 2.34]{HuL} for the usual cyclotomic nilHecke algebra of type $A$. However, the original argument does not transfer to the
cyclotomic odd nilHecke case. In fact, we shall give a self-contained new argument for the proof of Corollary \ref{maincor1} (which also works for the original non-super case).

As a third application of our graded dimension formula in Theorem \ref{mainthmA}, we study the algebra $\RR^\Lam(\beta)$ in the case when $\beta=\alpha_{1}+\alpha_{2}+\cdots+\alpha_{n}$ with $\alpha_1,\cdots,\alpha_n$ distinct. We show that in this case $\R(\beta)$ is indecomposable and we construct an explicit monomial basis for it, which gives the third main result of this paper.

\begin{thm}\label{mainthmC}  Assume $\beta=\alpha_{1}+\alpha_{2}+\cdots+\alpha_{n}$, $\alpha_i\neq\alpha_j,\,\forall\,1\leq i\neq j\leq n$. \begin{enumerate}
\item[1)]  Let $\mu,\nu\in I^\beta$ such that $e(\nu)\RR^\Lam(\beta)e(\mu)\neq 0$. Then the elements in the following set $$
 \Bigl\{\tau_{d_{\mu,\nu}}\prod_{k=1}^{n}x_{k}^{r_{k}}e(\mu)\Bigm|\, 0\leq r_{k}<N^\Lam(d_{\mu,\nu},\mu,k)\Bigr\}.
$$
form a $\bk$-linear basis of $e(\nu)\RR^{\Lambda}(\beta)e(\mu)$, where $d_{\mu,\nu}$ is the unique element in $\Sym_n$ such that $d_{\mu,\nu}\mu=\nu$;
\item[2)] $\RR^\Lam(\beta)$ is indecomposable.\end{enumerate}
\end{thm}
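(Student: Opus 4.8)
The plan is to treat the two parts in sequence, using the graded dimension formula of Theorem~\ref{mainthmA} as the source of all numerical constraints and then leveraging the simplicity of the hypothesis $\beta=\alpha_1+\cdots+\alpha_n$ with distinct $\alpha_i$. Since all $\alpha_i$ are distinct, every element of $I^\beta$ is a rearrangement of the fixed tuple of content $\beta$, so for $\mu,\nu\in I^\beta$ with the same underlying multiset there is a \emph{unique} $d_{\mu,\nu}\in\Sym_n$ with $d_{\mu,\nu}\mu=\nu$; in particular $\Sym(\mu,\nu)=\{d_{\mu,\nu}\}$ is a singleton whenever it is nonempty. Feeding this into Theorem~\ref{mainthmA}, the dimension formula collapses to a single product: $\dim_q^\pi e(\nu)\RR^\Lam(\beta)e(\mu)=\prod_{t=1}^n\bigl([N^\Lam(d_{\mu,\nu},\mu,t)]^\pi_{\mu_t}q_{\mu_t}^{N^\Lam(1,\mu,t)-1}\pi_{\mu_t}^{\rmp(d_{\mu,\nu},\mu,t)}\bigr)$. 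Specializing $q=\pi=1$ gives $\dim_{\bk}e(\nu)\RR^\Lam(\beta)e(\mu)=\prod_{t=1}^n N^\Lam(d_{\mu,\nu},\mu,t)$, which is exactly the cardinality of the proposed spanning set in part~1). So for part~1) it suffices to show that the listed elements $\{\tau_{d_{\mu,\nu}}\prod_k x_k^{r_k}e(\mu)\mid 0\le r_k<N^\Lam(d_{\mu,\nu},\mu,k)\}$ span $e(\nu)\RR^\Lam(\beta)e(\mu)$; a dimension count then upgrades spanning to a basis. For the spanning statement I would argue as follows: a general element of $e(\nu)\RR^\Lam(\beta)e(\mu)$ is a $\bk$-combination of monomials $e(\nu)\tau_{w}\prod_k x_k^{a_k}e(\mu)$ with $w\in\Sym(\mu,\nu)=\{d_{\mu,\nu}\}$; using the defining relations of $\RR^\Lam(\beta)$ (the quiver Hecke superalgebra straightening relations, which here involve \emph{at most one} crossing since $\ell(d_{\mu,\nu})$ strands participate but there is no repetition to create $\tau_k^2$ complications, together with the cyclotomic relation at the end of the first strand) one pushes all the $x$-powers to the right of $\tau_{d_{\mu,\nu}}$ and reduces each exponent modulo the cyclotomic-type relation to land in the range $0\le r_k<N^\Lam(d_{\mu,\nu},\mu,k)$. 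The key input making this reduction terminate is precisely that $N^\Lam(d_{\mu,\nu},\mu,k)$ is built from the exponents forced by the cyclotomic relation and by the $Q_{ij}$-polynomials, so the "straightening" never reintroduces a forbidden monomial.

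For part~2), indecomposability, the strategy is to show that the only idempotents in $\RR^\Lam(\beta)$ are $0$ and $1$, equivalently that $\RR^\Lam(\beta)/\rad$ is a field (a division ring over the algebraically closed or at least suitable $\bk$), or more directly that $\RR^\Lam(\beta)$ is a \emph{local} ring — or at least that $1$ is a primitive idempotent. I would first identify the top-degree part of $\RR^\Lam(\beta)$ under the $\Z$-grading: from the product formula, the graded dimension $\dim_q^\pi e(\nu)\RR^\Lam(\beta)e(\mu)$ is a product of quantum integers $[N^\Lam(d_{\mu,\nu},\mu,t)]^\pi$ times monomials in $q$, so each $e(\nu)\RR^\Lam(\beta)e(\mu)$ is concentrated in a bounded range of degrees with a one-dimensional top degree (the top of a product of quantum integers). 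Let $\nu^{\mathrm{top}}\in I^\beta$ be the tuple where the one-dimensional socle-like top concentrates; a standard argument (as in the non-super nilHecke case, cf. the cited \cite{HuL}) shows the two-sided ideal generated by the top-degree elements, or rather the complement of the degree-$0$ part, is nilpotent, because $\RR^\Lam(\beta)$ is finite-dimensional and non-negatively (or boundedly) graded with $\RR^\Lam(\beta)_0$ spanned by the idempotents $e(\nu)$. Then I would show $\RR^\Lam(\beta)_0\cong\bk$ as follows: the degree-$0$ part is spanned by those $e(\nu)$ with $e(\nu)\ne0$, and one proves that all such $e(\nu)$ are connected to a single one via nonzero morphisms of degree $0$ in both directions — concretely, using part~1), whenever $e(\nu)\ne0$ the element $\tau_{d_{\mu,\nu}}e(\mu)$ (the $r_k=0$ member of the basis) is a nonzero degree-$|{\rm shift}|$ element, and one shows by a weight/parity bookkeeping argument combined with the $N^\Lam$-positivity that these give a chain of idempotent-conjugating maps forcing $1=\sum_\nu e(\nu)$ to be primitive. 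The cleanest route is: (i) $\RR^\Lam(\beta)$ is a finite-dimensional graded algebra with $\RR^\Lam(\beta)_{<0}=0$; (ii) $\RR^\Lam(\beta)_0$ is spanned by the $e(\nu)$, all of which become conjugate (hence the semisimple quotient is a single matrix block, in fact $\bk$) — this uses that each nonzero $e(\nu)$-block is a \emph{cyclic} module in an appropriate sense because of the one-dimensional top; (iii) conclude the Jacobson radical is $\RR^\Lam(\beta)_{>0}$ plus a nil part and $\RR^\Lam(\beta)/\rad\cong\bk$, whence $\RR^\Lam(\beta)$ is local and indecomposable.

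I expect the main obstacle to be \emph{not} the dimension count (which is essentially immediate from Theorem~\ref{mainthmA} once one notes $\Sym(\mu,\nu)$ is a singleton), but rather the spanning argument in part~1) in the super setting, and the identification $\RR^\Lam(\beta)_0\cong\bk$ in part~2). In the super case the straightening relations for $\tau_k$ acquire sign twists (the $\tau$-generators for odd $i$ behave like odd-degree elements and the relation $\tau_k x_{k+1}=x_k\tau_k\pm(\text{correction})$ picks up parity signs), so one must be careful that pushing $x$-powers past $\tau_{d_{\mu,\nu}}$ and reducing modulo the cyclotomic relation still lands in the stated range without the signs obstructing termination — but since distinctness of the $\alpha_i$ means $d_{\mu,\nu}$ is realized by a reduced word with \emph{no repeated colors crossing twice}, the correction terms are genuinely lower in a filtration and the induction closes. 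For the local-ring argument, the delicate point is showing that distinct nonzero $e(\nu)$ really do all become identified in $\RR^\Lam(\beta)/\rad$; here I would exploit that the projective $\RR^\Lam(\beta)e(\nu)$ all have the same (one-dimensional) head by comparing graded dimensions of $\Hom$-spaces via the formula, concluding there is a unique graded simple module up to shift, hence a unique indecomposable projective, hence $\RR^\Lam(\beta)$ — being itself projective over itself — is indecomposable.
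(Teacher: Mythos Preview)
Your dimension count for Part~1) is exactly right: since the $\alpha_i$ are distinct, $\Sym(\mu,\nu)=\{d_{\mu,\nu}\}$ and Corollary~\ref{smaedimcor} collapses to $\dim e(\nu)\RR^\Lam(\beta)e(\mu)=\prod_k N^\Lam(d_{\mu,\nu},\mu,k)$, matching the cardinality of the proposed set. The spanning argument, however, has a real gap. You correctly note that every element is a combination of $\tau_{d_{\mu,\nu}}\prod_k x_k^{a_k}e(\mu)$, and you propose to ``reduce each exponent modulo the cyclotomic-type relation''. But the cyclotomic relation only says $x_1^{\langle h_{\mu_1},\Lam\rangle}e(\mu)=0$; for $k>1$ there is no such relation available, and nothing in your sketch explains how to manufacture one of the correct degree $N^\Lam(d_{\mu,\nu},\mu,k)$. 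The paper fills this gap with Lemma~\ref{keylem4}: one moves strand $k$ leftward past exactly those strands $m<k$ with $d_{\mu,\nu}(m)<d_{\mu,\nu}(k)$, applies the cyclotomic relation at position $1$, and then uses $\tau_a^2 e(\cdot)=Q_{\cdot,\cdot}(x_a,x_{a+1})e(\cdot)$ on the way back to produce an explicit $p_k\in\mathcal{P}_{\mu_{\leq k}}$, monic in $x_k$ of degree $N^\Lam(d_{\mu,\nu},\mu,k)$, with $\tau_{d_{\mu,\nu}}p_k e(\mu)=0$. Your phrase ``built from the cyclotomic relation and the $Q_{ij}$-polynomials'' is the right intuition, but without this lemma the induction on exponents cannot start.

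For Part~2) there is a genuine error of strategy. You aim to show that $\RR^\Lam(\beta)$ is \emph{local} (equivalently $\RR^\Lam(\beta)/\rad\cong\bk$, or a unique simple, or $1$ primitive). This is false whenever more than one $e(\mu)$ is nonzero: the $e(\mu)$ are already orthogonal idempotents summing to $1$. Nor is there a unique simple in general: in type $A_2$ with $I_{\rm odd}=\emptyset$, $\Lam=\Lam_1+\Lam_2$, $\beta=\alpha_1+\alpha_2$, the weight space $\mv(\Lam)_{\Lam-\beta}$ is the zero weight space of the adjoint representation and has dimension $2$, so $\RR^\Lam(\beta)$ has two non-isomorphic simples. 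Indecomposability only asks for no nontrivial \emph{central} idempotent, which is strictly weaker. The paper's argument runs as follows. By Lemma~\ref{lem43} the center sits inside $\bigoplus_\mu e(\mu)\RR^\Lam(\beta)e(\mu)$; by Part~1) with $\mu=\nu$ each diagonal block is a non-negatively graded local ring with degree-zero part $\bk e(\mu)$, so every central idempotent has the form $e_J=\sum_{\mu\in J}e(\mu)$. Proposition~\ref{prop41} then shows, by an explicit induction using the numbers $N^\Lam(s_{t-1},s_{t-1}\nu,k)$ and Corollary~\ref{cor42}, that any two nonzero $e(\mu),e(\nu)$ are linked by a chain with $e(\mu^{[t-1]})\RR^\Lam(\beta)e(\mu^{[t]})\neq 0$ throughout; picking adjacent $\nu\in J$, $\nu'\notin J$ and $0\neq x\in e(\nu)\RR^\Lam(\beta)e(\nu')$ gives $0=xe_J=e_Jx=x$, ruling out proper $J$. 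You do mention that the $e(\nu)$ should be ``connected via nonzero morphisms'', which is exactly the combinatorial heart of Proposition~\ref{prop41}; the error is in the conclusion you draw from it.
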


The content of the paper is organised as follows. In Section 2, we give some preliminary definitions and notations for Cartan superdatum, the algebra $\mathcal{U}(\fg)$ and the quiver Hecke superalgebra $\R(\beta)$.
In Section 3, after recalling Kang-Kashiwara-Oh's supercategorification of $\mv_{\mathbb{A}^\pi}(\Lam)$ via the cyclotomic quiver Hecke superalgebras $\RR^\Lam(\beta)$, we give the proof of the first main result Theorem \ref{mainthmA} and its Corollary \ref{maincorA1}. In Section 4, we first give a self-contained proof of the monomial basis result Corollary \ref{maincor1} for the cyclotomic odd nilHecke algebra $\ONH_n^\ell$. Then we generalise this result to give a monomial basis for the bi-weight space  $e(\wnu)\RR^\Lam({\beta})e(\wnu)$, i.e., give a proof of our second main result Theorem \ref{mainthmB}. In Section 5, we apply Theorem \ref{mainthmA} to give a proof of our third main result Theorem \ref{mainthmC}.

\bigskip
\centerline{Acknowledgements}
\bigskip

The research was supported by the National Natural Science Foundation of China (No. 12171029).
\bigskip

\section{Preliminary}

Let $I$ be an indexing set. An integral matrix $(a_{ij})_{i,j\in I}$ is called a Cartan matrix if it satisfies: i) $a_{ii}=2$, ii) $a_{ij}\leq 0$ for $i\neq j$, iii) $a_{ij}=0$ if and only if $a_{ji}=0$. We say ${\rm{A}}$ is symmetrizable if there is a diagonal matrix ${\rm{D}}={\rm{diag}}(\rd_i\in\Z_{>0}|i\in I)$ such that ${\rm{DA}}$ is symmetric.

Let $\bigl({\rm{A}}=(a_{ij})_{i,j\in I},P,\Pi,\Pi^\vee\bigr)$ be a Cartan superdatum in the sense of \cite[\S4.1]{KKO2}. That means, \begin{enumerate}
\item[CS1)] ${\rm{A}}$ is a symmetrizable Cartan matrix;
\item[CS2)] $P$ is a free abelian group, which is called the weight lattice;
\item[CS3)] $\Pi=\{\alpha_i\in P|i\in I\}$, called the set of simple roots, is $\Z$-linearly independent;
\item[CS4)] $\Pi^\vee=\{h_i\in P|i\in I\}\subset P^\vee=\Hom_\Z(P,\Z)$, called the set of simple coroots, satisfies that $\<h_i,\alpha_j\>=a_{ij}$ for all $i,j\in I$;
\item[CS5)] there is a decomposition $I=I_{\rm{even}}\sqcup I_{\rm{odd}}$ such that \begin{equation}\label{evenodd}
a_{ij}\in 2\Z, \quad \text{for all $i\in I_{\rm{odd}}$ and $j\in I$.}
\end{equation}
\end{enumerate}
The diagonal matrix ${\rm{D}}$ gives rise to a symmetric bilinear form $(-|-)$ on $P$ which satisfies: $$
(\alpha_i|\lam)=\rd_i\<h_i,\lam\>\quad \text{for all $\lam\in P$.}
$$
In particular, we have $(\alpha_i|\alpha_j)=\rd_i a_{ij}$ and hence $\rd_i=(\alpha_i|\alpha_i)/2$ for each $i\in I$.

We define the root lattice $Q$ to be the abelian group $\oplus_{i\in I}\Z\alpha_i$. We call $Q^+:=\oplus_{i\in I}\Z_{\geq 0}\alpha_i$ the positive root lattice. For any $\alpha=\sum_{i\in I}k_i\alpha_i\in Q^+$, we define $|\alpha|:=\sum_{i\in I}k_i$. For any $n\in\Z_{\geq 0}$, we define $Q_n^+:=\{\alpha\in Q^+||\alpha|=n\}$. Let $P^+:=\{\lam\in P|\text{$\<h_i,\lam\>\in\Z_{\geq 0}$ for all $i\in I$}\}$. Any element $\lam\in P^+$ is called a dominant integral weight.

For a Cartan superdatum $({\rm{A}},P,\Pi,\Pi^\vee)$, we define the parity function $\rmp: I\rightarrow\{\overline{0},\overline{1}\}$ by \begin{equation}\label{pi1}
\rmp(i):=\begin{cases} \overline{1}, &\text{if $i\in I_{\rm{odd}}$;}\\
\overline{0}, &\text{if $i\in I_{\rm{even}}$.}
\end{cases}
\end{equation}


Let $q$ be an indeterminate over $\Z$. For each $i\in I$, we define \begin{equation}\label{Gauss2}
\pi_i:=\pi^{\rmp(i)},\,\,q_i:=q^{\rd_i},\,\,[n]_i^{\pi}:=\frac{(\pi_iq_i)^n-q_i^{-n}}{\pi_iq_i-q_i^{-1}},\,\,
[n]_i^{\pi}!:=\prod_{k=1}^{n}[k]_i^{\pi},\,\,\text{for $n\in\Z_{\geq 0}$}.
\end{equation}
In particular, the assumption (\ref{evenodd}) implies that $\pi_i^{a_{ij}}=1$ for any $i,j\in I$. For any ring $R$, we define $R^\pi:=R\otimes_{\Z}\Z[\pi]$.

\begin{dfn}[{\cite[Definition 5.1]{KKO2}}] \label{ug}Let $({\rm{A}},P,\Pi,\Pi^\vee)$ be a Cartan superdatum. Let $\mcu(\fg)$ be the unital associative $\Q(q)^\pi$-algebra with generators $e_i, f_i$ and $\wk_i^{\pm 1}$ ($i\in I$) and the following defining relations: $$\begin{aligned}
& \wk_i\wk_j=\wk_j\wk_i,\,\, \wk_ie_j\wk_i^{-1}=q_i^{2a_{ij}}e_j,\,\, \wk_if_j\wk_i^{-1}=q_i^{-2a_{ij}}f_j,\\
& e_if_j-\pi^{\rmp(i)\rmp(j)}q_i^{-a_{ij}}f_je_i=\delta_{ij}\frac{1-\wk_i}{1-q_i^2\pi_i}\,\,\,\,(i,j\in I),\\
&\sum_{k=0}^{1-a_{ij}}\bigl(-\pi^{\rmp(i)\rmp(j)}\bigr)^k\pi_i^{k(k-1)/2}f_i^{\{1-a_{ij}-k\}}f_jf_i^{\{k\}}=0\,\,\,\, (i\neq j),\\
&\sum_{k=0}^{1-a_{ij}}\bigl(-\pi^{\rmp(i)\rmp(j)}\bigr)^k\pi_i^{k(k-1)/2}e_i^{\{1-a_{ij}-k\}}e_je_i^{\{k\}}=0\,\,\,\, (i\neq j),
\end{aligned}
$$
where $f_i^{\{n\}}=f_i^n/[n]_i^{\pi}!$ and $e_i^{\{n\}}=e_i^n/[n]_i^{\pi}!$.
\end{dfn}

The algebra $\mcu(\fg)$ has an anti-involution $\tau$ given by $$
e_i\mapsto f_i,\quad f_i\mapsto e_i,\quad \tilde{K}_i\mapsto\tilde{K}_i,\,\,\forall\,i\in I .
$$
Set $$
{\mathbb{k}}:=\Q(q)^\pi,\,\,\mathbb{A}^\pi:=\Z[q,q^{-1}]^\pi .
$$
Following \cite[\S5]{KKO2}, we define the $\mathbb{A}^\pi$-form $\mcu_{\mathbb{A}^\pi}(\fg)$ of $\mcu(\fg)$ to be the $\mathbb{A}^\pi$-subalgebra of $\mcu(\fg)$ generated by the elements $e_i^{\{n\}}, f_i^{\{n\}},\tilde{K}_i^{\pm 1}$ for $i\in I$ and $n\in\Z_{\geq 0}$. We denote by $\mcu_{\mathbb{A}^\pi}^+(\fg)$ (resp., $\mcu_{\mathbb{A}^\pi}^{-}(\fg)$) the $\mathbb{A}^\pi$-subalgebra of $\mcu_{\mathbb{A}^\pi}(\fg)$ generated by the elements $e_i^{\{n\}}, i\in I, n\in\Z_{\geq 0}$ (resp., by the elements $f_i^{\{n\}}, i\in I, n\in\Z_{\geq 0}$).

It is clear that both $\mcu(\fg)$ and $\mcu_{\mathbb{A}^\pi}(\fg)$ are $Q$-graded such that $$
\deg e_i=\alpha_i,\,\,\,\deg f_i=-\alpha_i,\,\,\,\deg\wk_i=0,\,\,\forall\,i\in I .
$$
For each $\alpha\in Q$, we use $\mcu(\fg)_\alpha$ to denote the corresponding homogeneous component of $\mcu(\fg)$.

\begin{dfn}[{\cite[\S3, (5.1)]{KKO2}}] Let $G$ be a subset of $P$ such that $G+Q\subset G$. A $\mcu(\fg)$-module $V$ is called a $G$-weighted module if $V=\oplus_{\mu\in G}V_\mu$ such that $$
\mcu(\fg)_\alpha V_\mu\subseteq V_{\mu+\alpha},\,\,\wk_i|_{V_\mu}=q_i^{2\<h_i,\mu\>}\pi_i^{\<h_i,\mu\>}\id_{V_\mu} .
$$
\end{dfn}
We denote by $\Mg(\mcu(\fg))$ the category of $G$-weighted $\mcu(\fg)$-modules.
%

Throughout this paper, let $\bk$ be a field of characteristic different from $2$. Let $\bigl({\rm{A}},P,\Pi,\Pi^\vee\bigr)$ be a Cartan superdatum. Let ${\rm x}_1,\cdots,{\rm x}_n$ be $n$ indeterminates over $\bk$. For any $n\geq 2$ and $\nu\in I^n$, set $$
\mathcal{P}_\nu:=\bk\<{\rm x}_1,\cdots,{\rm x}_n\>/\<{\rm x}_a{\rm x}_b-(-1)^{\rmp(\nu_a)\rmp(\nu_b)}{\rm x}_b{\rm x}_a|1\leq a<b\leq n\> .
$$
Then $\mathcal{P}_\nu$ is a superalgebra if we endow the image of each ${\rm x}_k$ the parity $\rmp(\nu_k)$. We refer the readers to \cite[\S12]{Klesh:book} for general theory of superalgebras. For $i,j\in I$, we choose an element $Q_{ij}\in\mathcal{P}_{(ij)}$ which is of the form $$
Q_{ij}({\rm x}_1,{\rm x}_2)=\sum_{r,s\geq 0}t_{i,j;(r,s)}{\rm x}_1^r{\rm x}_2^s,
$$
where the coefficient satisfies that \begin{enumerate}
\item $t_{i,j;(r,s)}\neq 0$ only if $-2(\alpha_i|\alpha_j)-r(\alpha_i|\alpha_i)-s(\alpha_j|\alpha_j)=0$;
\item $t_{i,j;(r,s)}=t_{j,i;(s,r)}$, $t_{i,j;(-a_{ij},0)}\in\bk^\times$;
\item $t_{i,j;(r,s)}=0$ if either $i=j$ or $i\in I_{\rm{odd}}$ and $r$ is odd.
\end{enumerate}
In the definition of the quiver Hecke superalgebras and their cyclotomic quotients given below, we shall only use the element $Q_{i,j}(x_a,x_b)e(\nu)$ in the case when $\nu_a=i$ and $\nu_b=j$.
For any $\beta\in Q_n^+$, we define $$
I^\beta:=\{\nu=(\nu_1,\cdots,\nu_n)\in I^n|\sum_{s=1}^{n}\alpha_{\nu_s}=\beta\}. $$

\begin{dfn}\label{qhs} Let $({\rm{A}},P,\Pi,\Pi^\vee)$ be a Cartan superdatum, $\beta\in Q_n^+$ and $\{Q_{i,j}|i,j\in I\}$ be chosen as above. The associated degree $n$ quiver Hecke superalgebras $\RR(\beta)$ is the superalgebra over $\bk$ (with the identity element $e(\beta)$), which is defined by the generators $$
e(\nu)\, (\nu\in I^\beta), x_k\, (1\leq k\leq n),\, \tau_a (1\leq a\leq n-1),
$$
the parity $$
\rmp(e(\nu))=0,\quad \rmp(x_ke(\nu))=\rmp(\nu_k),\quad \rmp(\tau_ae(\nu))=\rmp(\nu_a)\rmp(\nu_{a+1}),
$$
and the following relations: $$\begin{aligned}
& e(\mu)e(\nu)=\delta_{\mu,\nu}e(\nu),\,\,\text{for $\mu,\nu\in I^\beta$}, \,\,e(\beta)=\sum_{\nu\in I^\beta}e(\nu),\\
& x_px_qe(\nu)=(-1)^{\rmp(\nu_p)\rmp(\nu_q)}x_qx_pe(\nu),\,\,\text{if $p\neq q$,}\\
&x_pe(\nu)=e(\nu)x_p,\,\,\,\tau_ae(\nu)=e(s_a\nu)\tau_a,\,\,\text{where $s_a=(a,a+1)$,}\\
& \tau_ax_pe(\nu)=(-1)^{\rmp(\nu_p)\rmp(\nu_a)\rmp(\nu_{a+1})}x_p\tau_ae(\nu),\,\,\text{if $p\neq a,a+1$,}\\
& \bigl(\tau_ax_{a+1}-(-1)^{\rmp(\nu_a)\rmp(\nu_{a+1})}x_a\tau_a\bigr)e(\nu)\\
&\qquad =\bigl(x_{a+1}\tau_a-(-1)^{\rmp(\nu_a)\rmp(\nu_{a+1})}\tau_ax_a\bigr)=\delta_{\nu_a,\nu_{a+1}}e(\nu),\\
&\tau_a^2e(\nu)=Q_{\nu_a,\nu_{a+1}}(x_a,x_{a+1})e(\nu),\\
&\tau_a\tau_be(\nu)=(-1)^{\rmp(\nu_a)\rmp(\nu_{a+1})\rmp(\nu_b)\rmp(\nu_{b+1})}\tau_b\tau_ae(\nu),\,\,\text{if $|a-b|>1$},\\
& (\tau_{a+1}\tau_a\tau_{a+1}-\tau_a\tau_{a+1}\tau_a)e(\nu)\\
&=\begin{cases}
\frac{Q_{\nu_a,\nu_{a+1}}(x_{a+2},x_{a+1})-Q_{\nu_a,\nu_{a+1}}(x_{a},x_{a+1})}{x_{a+2}-x_a}e(\nu), &\text{if $\nu_a=\nu_{a+2}\in I_{\rm{even}}$;}\\
(-1)^{\rmp(\nu_{a+1})}(x_{a+2}-x_a)\frac{Q_{\nu_a,\nu_{a+1}}(x_{a+2},x_{a+1})-Q_{\nu_a,\nu_{a+1}}(x_{a},x_{a+1})}{x_{a+2}^2-x_a^2}e(\nu), &\text{if $\nu_a=\nu_{a+2}\in I_{\rm{odd}}$;}\\
0, &\text{otherwise.}
\end{cases}
\end{aligned}
$$
\end{dfn}

\begin{prop}[{\cite[Corollary 3.15]{KKT}}]\label{stdBasis} Let $\beta\in Q^+_n$. For each $w\in\Sym_n$, we fix a reduced expression $w=s_{i_1}\cdots s_{i_l}$, and define $\tau_w:=\tau_{i_1}\cdots\tau_{i_l}$, then the following set $$\{e(\nu)x_1^{t_1}\cdots x_n^{t_n}\tau_w|t_i\in \Z_{\geq 0},\,i=1,\cdots n,\,w\in \Sym_n,\,\nu\in I^\beta\}
$$ forms a basis of the free $\bk$-module $\RR(\beta)$.
\end{prop}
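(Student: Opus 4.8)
The plan is to prove the two halves of the basis statement separately: first that the displayed set spans $\RR(\beta)$ over $\bk$, and then that it is $\bk$-linearly independent. Spanning will follow from a straightening algorithm that uses only the defining relations of Definition \ref{qhs}, while independence will be established by constructing a faithful ``super polynomial representation'' of $\RR(\beta)$ on which the standard monomials act by manifestly independent operators. Throughout, monomials $e(\nu)x_1^{t_1}\cdots x_n^{t_n}\tau_w$ (for the fixed choice of reduced expression of each $w$) are called \emph{standard}.

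For spanning, I would show that every word in the generators $e(\nu)$, $x_k$, $\tau_a$ rewrites as a $\bk$-linear combination of standard monomials. Using $x_p e(\nu)=e(\nu)x_p$ and $\tau_a e(\nu)=e(s_a\nu)\tau_a$ one first collects a single idempotent on the far left. Next one pushes all $x$-variables to the left of all $\tau$-factors: the relation $\tau_a x_p e(\nu)=(-1)^{\rmp(\nu_p)\rmp(\nu_a)\rmp(\nu_{a+1})}x_p\tau_a e(\nu)$ for $p\neq a,a+1$ handles the ``far'' cases, while the two relations governing $\tau_a x_{a+1}$ and $x_{a+1}\tau_a$ move an $x$ past a single $\tau_a$ at the cost of a term with one fewer $\tau$ (the $\delta_{\nu_a,\nu_{a+1}}e(\nu)$ correction). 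Finally one reduces the $\tau$-part to the chosen reduced word: whenever a subword is non-reduced one applies $\tau_a^2 e(\nu)=Q_{\nu_a,\nu_{a+1}}(x_a,x_{a+1})e(\nu)$ to trade two $\tau$'s for a polynomial in the $x$'s, and whenever two reduced words for the same $w$ must be compared one applies the braid relation of Definition \ref{qhs}, whose right-hand side contains strictly fewer $\tau$-factors. Ordering monomials first by the Coxeter length $\ell(w)$ and then by total $x$-degree, each rewriting step strictly decreases the order, so the procedure terminates and the standard monomials span $\RR(\beta)$.

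For independence I would let $\RR(\beta)$ act on $\mathrm{Pol}_\beta:=\bigoplus_{\nu\in I^\beta}\mathcal{P}_\nu$ via a homomorphism $\rho$, with $e(\nu)$ the projection onto $\mathcal{P}_\nu$, each $x_k$ acting as super-multiplication (of parity $\rmp(\nu_k)$) on the $\nu$-summand, and each $\tau_a$ sending the $\nu$-summand to the $s_a\nu$-summand as a super divided-difference operator: the usual Demazure operator $\partial_a$ twisted by $s_a$ when $\nu_a=\nu_{a+1}\in I_{\mathrm{even}}$, its odd analogue from \cite{EKL} when $\nu_a=\nu_{a+1}\in I_{\mathrm{odd}}$, and a sign-twist of $Q_{\nu_a,\nu_{a+1}}$ times $s_a$ otherwise. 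Once $\rho$ is checked to respect every relation of Definition \ref{qhs}, it is a genuine action, and the $\tau$-operators are triangular with respect to the Coxeter-length filtration (they strictly decrease length modulo a permutation of the summand index). Combining this with the polynomial grading, one reads off the summand index $\nu$, the permutation $w$, and the exponent vector $(t_1,\dots,t_n)$ from the image $\rho(e(\nu)x_1^{t_1}\cdots x_n^{t_n}\tau_w)$, so distinct standard monomials act by $\bk$-independent operators and are therefore linearly independent in $\RR(\beta)$. Together with spanning this shows the standard monomials form a $\bk$-basis, and in particular that $\RR(\beta)$ is free over $\bk$.

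The main obstacle I anticipate is the sign bookkeeping needed to make $\rho$ well-defined in the super setting. The commutation and mixed relations between $\tau_a$ and $x_p$ are routine, but verifying the deformed braid relation—with its separate even and odd cases and the factor $(-1)^{\rmp(\nu_{a+1})}(x_{a+2}-x_a)/(x_{a+2}^2-x_a^2)$ in the odd case—requires a delicate identity between (odd) divided-difference operators on the super-commutative ring $\mathcal{P}_\nu$. Establishing that identity, together with confirming that $\tau_a^2$ acts as $Q_{\nu_a,\nu_{a+1}}(x_a,x_{a+1})$ under these sign conventions, is the technical heart of the argument; once the odd nilHecke operators of \cite{EKL} are in place, everything else reduces to the corresponding non-super computation of Khovanov--Lauda.
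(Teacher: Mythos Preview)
The paper does not supply its own proof of this proposition: it is stated with a citation to \cite[Corollary~3.15]{KKT} and used as a black box thereafter. So there is nothing in the present paper to compare your argument against line by line.

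That said, your plan is essentially the standard one carried out in \cite{KKT}: a straightening argument for spanning, followed by linear independence via a faithful polynomial (super) representation on $\bigoplus_{\nu\in I^\beta}\mathcal{P}_\nu$ in which $\tau_a$ acts by (odd or even) divided-difference operators. In \cite{KKT} this representation is constructed explicitly and the relations are checked (with the odd nilHecke operators of \cite{EKL} handling the $I_{\rm odd}$ case), after which independence is read off from the leading term of $\rho(e(\nu)x_1^{t_1}\cdots x_n^{t_n}\tau_w)$. Your outline matches this strategy; the one place to be careful is exactly the point you flag, namely the sign bookkeeping in verifying the deformed braid relation in the odd case, which in \cite{KKT} is handled by working with the explicit formulas for the odd Demazure operators and the specific parity conventions on $\mathcal{P}_\nu$.
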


If $\Lam\in P^+,\,i\in I$ and $u$ is an indeterminate over $\Z$, then we define $$
a_i^\Lam(u)=u^{\<h_i,\Lam\>},\quad
a^\Lam(x_1):=\sum_{\nu\in I^\beta}x_1^{\<h_{\nu_1},\Lam\>}e(\nu) .
$$

\begin{dfn} Let $\beta\in Q_n^+$, $\Lam\in P^+$. The cyclotomic quiver Hecke superalgebra $\RR^\Lam(\beta)$ is defined to be the quotient algebra: $$
\RR^\Lam(\beta):=\RR(\beta)/\<a^\Lam(x_1)\> .
$$
\end{dfn}

Both the algebra $\RR(\beta)$ and $\RR^\Lam(\beta)$ are $\Z$-graded by setting $$
\deg_\Z(e(\nu))=0,\quad \deg_\Z(x_ke(\nu))=(\alpha_{\nu_k}|\alpha_{\nu_k}),\quad \deg_\Z(\tau_ae(\nu))=-(\alpha_{\nu_a}|\alpha_{\nu_{a+1}}).
$$
Similarly, $\RR^\Lam(\beta)$ inherits a $\Z_2$-grading from $\RR(\beta)$. That says, $\RR^\Lam(\beta)$ is a superalgebra too.

\begin{rem} By some abuse of notations, we shall use the same symbols to denote the generators of both $\RR(\beta)$ and $\RR^\Lam(\beta)$. By Proposition \ref{stdBasis}, for any $\nu\in I^\beta$, the $\bk$-subalgebra of $\RR(\beta)$ generated by $x_1e(\nu),\cdots,x_ne(\nu)$ is canonically isomorphic to $\mathcal{P}_\nu$ via the correspondence $x_ie(\nu)\mapsto{\rm x}_i$, $\forall\,1\leq i\leq n$. There are natural left (resp., right) actions of $\mathcal{P}_\nu$ on $e(\nu)\RR(\beta)$ (resp., $\RR(\beta)e(\nu)$) which are defined by multiplication followed with substituting each ${\rm x}_j$ with $x_j$. Similarly, there are natural left (resp., right) actions of $\mathcal{P}_\nu$ on $e(\nu)\RR^\Lam(\beta)$ (resp., $\RR^\Lam(\beta)e(\nu)$).
\end{rem}

\section{$(\Z\times\Z_2)$-graded dimensions}

Let $\Ms(\R(\beta))$ be the category of arbitrary $(\Z\times\Z_2)$-graded $\R(\beta)$-modules.\footnote{The notation $\Ms(\R(\beta))$ was used in \cite{KKO2} to denote the category of $\Z$-graded $\R(\beta)$-supermodules. We believe that they indeed mean $(\Z\times\Z_2)$-graded $\R(\beta)$-modules as otherwise they can not define the $(q,\pi)$-dimension in \cite[(8.8)]{KKO2} for any module in $\Rs(\R(\beta))$.} Let $\Ps(\R(\beta))$ and $\Rs(\R(\beta))$ be the full subcategory of $\Ms(\R(\beta))$ consisting of projective $\R(\beta)$-supermodules finitely dimensional over $\bk$ and $\R(\beta)$-supermodules finitely dimensional over $\bk$.

Let $q$ be the grading shift functor on $\Ms(\R(\beta))$. That means, $$(qM)_j=M_{j-1}$$ for any $M=\oplus_{\substack{j\in \Z}}M_j\in \Ms(\R(\beta))$. Let
$$\begin{aligned}\phi: \R(\beta)&\rightarrow\R(\beta)\\
a&\mapsto (-1)^ia, \quad \forall\,a\in\R(\beta)_i, \end{aligned}$$ be the parity involution of $\R(\beta)$.

Let $\Pi: \Ms(\R(\beta))\rightarrow\Ms(\R(\beta))$ be the parity changing functor. Then $$\begin{aligned}
\Pi(M)=\{\pi(x)|x\in M\},\quad \pi(x)+\pi(x')=\pi(x+x'),\\
a\pi(x):=\pi(\phi(a)x),\quad \forall\,a\in\R(\beta), x,x'\in M .\\
(\Pi M)_{i}:=\{\pi(x)|x\in M_{1-i}\},\,\,\,\forall\,i\in \Z_2 .
\end{aligned}
$$


For any $\beta\in Q_n^+$ and $i\in I$, we set $$
e(\beta,i):=\sum_{\nu=(\nu_1,\cdots,\nu_n)\in I^{\beta}}e(\nu_1,\cdots,\nu_n,i).
$$
Kang, Kashiwara and Oh have introduced restriction functors and induction functors in \cite{KKO2} as follows: $$\begin{aligned}
E_i^\Lam:\, \Ms(\RR^\Lam(\beta+\alpha_i))&\rightarrow \Ms(\RR^\Lam(\beta)),\\
N&\mapsto e(\beta,i)N=e(\beta,i)\RR^\Lam(\beta+\alpha_i)\otimes_{\RR^{\Lam}(\beta+\alpha_i)}N,\\
F_i^\Lam:\, \Ms(\RR^\Lam(\beta))&\rightarrow \Ms(\RR^\Lam(\beta+\alpha_i)),\\
M&\mapsto \RR^\Lam(\beta+\alpha_i)e(\beta,i)\otimes_{\RR^{\Lam}(\beta)}M .
\end{aligned}
$$
For any category $\mathcal{C}$ we use $[\mathcal{C}]$ to denote its Grothendieck group. Then the functors $q$ and $\Pi$ can descend to the Grothendieck group $[\Ms(\R(\beta))]$ for which we denote by $q[M]=[qM]$ and $\pi[M]=[\Pi M]$. This makes $[\Ms(\R(\beta))]$ an $\mathbb{A}^\pi$-module, where $\mathbb{A}=\Z[q,q^{-1}]$.  Let ${\rm E}_i:=[E_i^\Lam]$, ${\rm F}_i:=[F_i^\Lam]$, where $[E_i^\Lam]: [\Ps(\RR^\Lam(\beta+\alpha_i))]\rightarrow [\Ps(\RR^\Lam(\beta))]$ and $[F_i^\Lam]: [\Ps(\RR^\Lam(\beta))]\rightarrow [\Ps\RR^\Lam(\beta+\alpha_i))]$ are the naturally induced map on the Grothendieck groups.
We define $$
\RR^\Lam=\bigoplus_{n\geq 0, \beta\in Q_n^+}\RR^\Lam(\beta) .
$$
Then $$\begin{aligned}
[\Ps(\RR^\Lam)] & =\oplus_{\beta\in Q_n^+}[\Ps(\RR^\Lam(\beta)],\\
[\Ps(\RR^\Lam)] & =\oplus_{\beta\in Q_n^+}[\Ps(\RR^\Lam(\beta)].
\end{aligned}$$

Let $\tilde{\rm K}_i$ be an endomorphism on $[\Ps(\RR^\Lam)]$ and $[\Rs(\RR^\Lam)]$ defined by \begin{equation}\label{2ki}
\tilde{\rm K}_i|_{[\Ps(\RR^\Lam(\beta)]}:=(q_i^2\pi_i)^{\<h_i,\Lam-\beta\>},\quad \tilde{\rm K}_i|_{[\Rs(\RR^\Lam(\beta)]}:=(q_i^2\pi_i)^{\<h_i,\Lam-\beta\>}.
\end{equation}
Note that $q^{-(\alpha_i|\alpha_j)}=q^{\rd_ia_{ij}}=q_i^{a_{ij}}$. Thus, applying \cite[(8.16)]{KKO2} we get that  \begin{equation}\label{effe}
{\rm E}_i{\rm F}_j-\pi^{\rmp(i)\rmp(j)}q_i^{-a_{ij}}{\rm F}_j{\rm E}_i=\delta_{ij}\frac{1-\tilde{\rm K}_i}{1-q_i^2\pi_i} ,
\end{equation}
which is the same as the fourth equality in Definition \ref{ug} if we identify ${\rm E}_i, {\rm E}_j$ with $e_i, f_j$ respectively.

Let $\Lam\in P^+$. Let $\mv(\Lam)$ be the $P$-weighted $\mathcal{U}_q(\mathfrak{g})$-module generated by $v_\Lam$ of weight $\Lam$ with additional relations given by:$$e_iv_\Lam=0,\qquad\qquad f_i^{\<h_i,\Lam\>+1}v_\Lam=0\quad\, {\text for\,\,all\,\,} i\in I.
$$ We define an $\mathbb{A}^\pi$-form of $\mv(\Lam)$ by $$
\mv_{\mathbb{A}^\pi}(\Lam):=\mcu_{\mathbb{A}^\pi}(\fg)v_\Lam .
$$

\begin{thm}\text{(\cite[Theorem 8.14]{KKO2})}\label{KKO2mainthm} Let $\Lam\in P^+$. Then $[\Ps(\RR^\Lam)]$ is a $\mcu_{\mathbb{A}^\pi}(\fg)$-module, and there is a $\mcu_{\mathbb{A}^\pi}(\fg)$-module isomorphism: $$
[\Ps(\RR^\Lam)]\cong \mv_{\mathbb{A}^\pi}(\Lam). $$
\end{thm}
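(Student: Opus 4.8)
The plan is to adapt the strategy of Kang--Kashiwara from the non-super categorification, tracking the $\pi$-signs and parities imposed by the $(\Z\times\Z_2)$-graded super structure throughout. The argument splits into three movements: first, promote $[\Ps(\RR^\Lam)]$ to a $\mcu_{\mathbb{A}^\pi}(\fg)$-module by realising the generators $e_i,f_i,\wk_i^{\pm1}$ via the functors $E_i^\Lam,F_i^\Lam$ and the operators $\tilde{\rm K}_i$ of (\ref{2ki}); second, construct a surjective module homomorphism $\Phi\colon\mv_{\mathbb{A}^\pi}(\Lam)\twoheadrightarrow[\Ps(\RR^\Lam)]$ out of the highest weight vector; third, prove $\Phi$ is injective by matching graded super-ranks weight space by weight space.

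For the module structure I would first verify that $E_i^\Lam$ and $F_i^\Lam$ are exact, preserve $\Ps(\RR^\Lam)$, and are biadjoint up to a $(q,\pi)$-grading and parity shift; this rests on the projectivity of $e(\beta,i)\RR^\Lam(\beta+\alpha_i)$ as a right $\RR^\Lam(\beta)$-module together with the cyclotomic relation $a^\Lam(x_1)=0$. The essential categorical input is a short exact sequence of $(\RR^\Lam,\RR^\Lam)$-bimodules interchanging $F_j^\Lam E_i^\Lam$ and $E_i^\Lam F_j^\Lam$, whose image in the Grothendieck group yields exactly the commutator (\ref{effe}); here the factor $\pi^{\rmp(i)\rmp(j)}$ is dictated by the super-commutation of the odd generators. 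The remaining quantum super-Serre relations hold for the $F_i$ because the quiver Hecke superalgebra $\RR(\beta)$ categorifies $\mcu_{\mathbb{A}^\pi}^{-}(\fg)$, and they pass to the $E_i$ by biadjunction; the hypothesis $\pi_i^{a_{ij}}=1$ coming from (\ref{evenodd}) is precisely what lets the $\pi$-deformed Serre elements of Definition \ref{ug} lift to vanishing composites of functors. Together with the evident action of $\tilde{\rm K}_i$ from (\ref{2ki}), this makes $[\Ps(\RR^\Lam)]$ a $\mcu_{\mathbb{A}^\pi}(\fg)$-module.

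For the comparison map I would take the class $[\bk]$ of the one-dimensional projective $\RR^\Lam(0)=\bk$ as the image of $v_\Lam$. Weight reasons give $E_i[\bk]=0$, while the cyclotomic relation $a^\Lam(x_1)=0$ forces $F_i^{\<h_i,\Lam\>+1}[\bk]=0$, so $[\bk]$ satisfies exactly the defining relations of the universal highest weight module $\mv_{\mathbb{A}^\pi}(\Lam)$. Since every projective indecomposable of $\RR^\Lam(\beta)$ occurs as a summand of an iterated $F_i$-image of $[\bk]$, the module $[\Ps(\RR^\Lam)]$ is cyclic on $[\bk]$, and universality yields a surjective $\mcu_{\mathbb{A}^\pi}(\fg)$-module map $\Phi$.

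The hard part is injectivity. I would deduce it by showing that, in each weight $\Lam-\beta$ and each $(\Z\times\Z_2)$-degree, the two sides have equal rank, so that the surjection $\Phi$ must be an isomorphism; some care with scalars is needed because $\mathbb{A}^\pi$ has zero divisors, so the comparison is best run after the specialisations $\pi=\pm1$, where one works over the domain $\Z[q,q^{-1}]$. The ranks are matched through an abstract super-crystal argument: defining Kashiwara operators $\we_i,\wf_i$ on the simple $\RR^\Lam(\beta)$-supermodules via the head and socle of $E_i^\Lam$ and $F_i^\Lam$, one identifies the resulting crystal with the highest weight crystal $B(\Lam)$, whose $(\Lam-\beta)$-component has the expected cardinality. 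The main obstacle throughout is the case $i\in I_{\rm{odd}}$, where the odd nilHecke algebra replaces the usual nilHecke algebra, the polynomial generators anticommute, and the parity-changing functor $\Pi$ must be threaded correctly through the adjunctions and the bimodule filtrations; one must check that these sign-twisted short exact sequences remain exact so that the $(\Z\times\Z_2)$-graded ranks --- not merely the ungraded ones --- coincide.
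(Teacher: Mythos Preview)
The paper does not prove this statement at all: Theorem~\ref{KKO2mainthm} is quoted verbatim from \cite[Theorem~8.14]{KKO2} and used as a black box to derive the dimension formulae in Section~3 (via Lemma~\ref{keylem1}). There is therefore no ``paper's own proof'' to compare against.

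Your proposal is a reasonable high-level reconstruction of the Kang--Kashiwara--Oh argument in \cite{KKO1,KKO2}, and the three movements you describe (module structure via exact functors and the bimodule short exact sequence, the universal map out of $v_\Lam\mapsto[\bk]$, and injectivity via a crystal/rank comparison) do match the architecture of that paper. As a sketch it is fine; as a proof it is of course far from complete, since the genuine work lies in establishing the bimodule filtrations and the biadjunction with the correct $(q,\pi)$-shifts in the odd case, which occupies most of \cite{KKO2}. For the purposes of the present paper, however, no proof is expected: you should simply cite \cite[Theorem~8.14]{KKO2}.
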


By the definitions given above,
 $$
{\rm F}_i[\RR^\Lam(\beta)]=[\RR^\Lam(\beta+\alpha_i)e(\beta,i)],\quad {\rm E}_i[\RR^\Lam(\beta+\alpha_i)]=[e(\beta,i)\RR^\Lam(\beta+\alpha_i)].
$$

%
%

Recall that, for any $M=\oplus_{a\in\Z}(M_{a,\overline{0}}\oplus M_{a,\overline{1}})\in\rps(\RR(\beta))$, its $(q,\pi)$-dimension is given by: $$
\dim_q^{\pi}(M):=\sum_{a\in\Z}\bigl(\dim_{\bk}M_{a,\overline{0}}+\pi\dim_{\bk}M_{a,\overline{1}}\bigr)q^a\in\Z[q^{\pm1}]^\pi .
$$

\begin{lem}\label{keylem1}
Let $\Lam\in P^+$ and $\beta\in Q_n^+$. For any $\nu=(\nu_1,\cdots,\nu_n),\nu'=(\nu'_1,\cdots,\nu'_n)\in I^\beta$, we have $$e_{\nu_1}\cdots e_{\nu_n}f_{\nu'_n}\cdots f_{\nu'_1}v_\Lam=\dim_q^{\pi}\Bigl(e(\nu)\RR^\Lam(\beta)e(\nu')\Bigr)v_\Lam.
$$
\end{lem}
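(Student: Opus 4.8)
The plan is to deduce Lemma~\ref{keylem1} directly from the supercategorification isomorphism of Theorem~\ref{KKO2mainthm}, by tracking the action of the operators $\mathrm{E}_i$ and $\mathrm{F}_i$ on the distinguished class $[\RR^\Lam(0)]=[\bk]=v_\Lam$ in $[\Ps(\RR^\Lam)]\cong\mv_{\mathbb{A}^\pi}(\Lam)$. First I would recall the two identifications already set up in the excerpt: under the module isomorphism of Theorem~\ref{KKO2mainthm}, $\mathrm{E}_i$ acts as $e_i$ and $\mathrm{F}_i$ acts as $f_i$ (this is exactly what is observed right after~(\ref{effe})), and the lowest-weight generating object $[\RR^\Lam(0)]$ is sent to $v_\Lam$. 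Next I would compute $f_{\nu'_n}\cdots f_{\nu'_1}v_\Lam$ by iterating the explicit formula ${\rm F}_i[\RR^\Lam(\beta)]=[\RR^\Lam(\beta+\alpha_i)e(\beta,i)]$ recorded just after Theorem~\ref{KKO2mainthm}; unwinding the chain gives $f_{\nu'_n}\cdots f_{\nu'_1}v_\Lam=[\,\RR^\Lam(\beta)e(\nu')\,]$ as a class in $[\Ps(\RR^\Lam(\beta))]$ (possibly after keeping careful track of a grading/parity shift, but since $e(\nu')$ is an idempotent of degree $(0,\overline 0)$ and the $\mathrm{F}_i$'s are defined via tensoring with the concrete projective bimodules $\RR^\Lam(\beta+\alpha_i)e(\beta,i)$, no shift actually intervenes).

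Then I would apply $e_{\nu_1}\cdots e_{\nu_n}=\mathrm{E}_{\nu_1}\cdots\mathrm{E}_{\nu_n}$ to the class $[\RR^\Lam(\beta)e(\nu')]$, again iterating the companion formula ${\rm E}_i[\RR^\Lam(\beta+\alpha_i)]=[e(\beta,i)\RR^\Lam(\beta+\alpha_i)]$; since $\mathrm{E}_i$ is exact and left multiplication by $e(\beta,i)$ on $\RR^\Lam(\beta+\alpha_i)e(\nu')$ just picks out the summand indexed by $(\nu_1,\dots,\nu_n)$ in the appropriate step, the cumulative effect of all $n$ operators is to return the class $[\,e(\nu)\RR^\Lam(\beta)e(\nu')\,]$, which now lives in $[\Ps(\RR^\Lam(0))]=[\Ps(\bk)]$. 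A finite-dimensional $(\Z\times\Z_2)$-graded $\bk$-vector space has Grothendieck class in $[\Ps(\bk)]=\mathbb{A}^\pi v_\Lam$ equal to its $(q,\pi)$-dimension times $v_\Lam$, by the very definition of $\dim_q^\pi$; hence $e_{\nu_1}\cdots e_{\nu_n}f_{\nu'_n}\cdots f_{\nu'_1}v_\Lam=\dim_q^\pi\!\bigl(e(\nu)\RR^\Lam(\beta)e(\nu')\bigr)v_\Lam$, as claimed.

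The only subtlety — and the step I would be most careful about — is the bookkeeping of the $q$- and $\pi$-grading shifts in the definitions of $E_i^\Lam$ and $F_i^\Lam$: in the non-super KLR setting the induction/restriction functors carry grading shifts, and in the $(\Z\times\Z_2)$-graded setting there are analogous shifts by powers of $q$ and $\pi$ built into Kang--Kashiwara--Oh's definitions of the bimodules $e(\beta,i)\RR^\Lam(\beta+\alpha_i)$ and $\RR^\Lam(\beta+\alpha_i)e(\beta,i)$. I would therefore verify that the particular normalizations of $\mathrm{E}_i,\mathrm{F}_i$ used in Theorem~\ref{KKO2mainthm} (the same normalizations that make~(\ref{effe}) hold on the nose) are precisely the ones for which ${\rm F}_i[\RR^\Lam(\beta)]=[\RR^\Lam(\beta+\alpha_i)e(\beta,i)]$ and ${\rm E}_i[\RR^\Lam(\beta+\alpha_i)]=[e(\beta,i)\RR^\Lam(\beta+\alpha_i)]$ hold without extra scalar factors — this is exactly what the excerpt asserts in the two displayed identities following Theorem~\ref{KKO2mainthm}, so once those are invoked the computation is forced. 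With that normalization pinned down, the proof is a clean induction on $n$ applying the two displayed identities alternately, and the final identification of a Grothendieck class over $\bk$ with a $(q,\pi)$-dimension is immediate from the Definition of $\dim_q^\pi$.
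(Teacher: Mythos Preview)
Your proposal is correct and follows essentially the same route as the paper's own proof: both identify $v_\Lam$ with $[\RR^\Lam(0)]$, iterate the displayed identities ${\rm F}_i[\RR^\Lam(\beta)]=[\RR^\Lam(\beta+\alpha_i)e(\beta,i)]$ and ${\rm E}_i[\RR^\Lam(\beta+\alpha_i)]=[e(\beta,i)\RR^\Lam(\beta+\alpha_i)]$ to reach $[e(\nu)\RR^\Lam(\beta)e(\nu')]$, and then read off the $(q,\pi)$-dimension. Your extra paragraph on normalization and grading shifts is more cautious than the paper (which simply invokes the two displayed identities without further comment), but the substance is identical.
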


\begin{proof} By Theorem \ref{KKO2mainthm} and the definitions of the action of $E_i^\Lam$ and $F_i^\Lam$, we have $$\begin{aligned}
e_{\nu_1}\cdots e_{\nu_n}f_{\nu'_n}\cdots f_{\nu'_1}v_\Lam
&={\rm{E}}_{\nu_1}\cdots {\rm{E}}_{\nu_n}{\rm{F}}_{\nu'_n}\cdots {\rm{F}}_{\nu'_1}[\RR^\Lam(0)]
={\rm{E}}_{\nu_1}\cdots {\rm{E}}_{\nu_n}[\RR^\Lam(\beta)e(\nu')]\\
&=[e(\nu)\RR^\Lam(\beta)e(\nu')]=(\dim_q^\pi e(\nu)\RR^\Lam(\beta)e(\nu'))[\RR^\Lam(0)]\\
&=(\dim_q^\pi e(\nu)\RR^\Lam(\beta)e(\nu')) v_\Lam .
\end{aligned}
$$
This completes the proof of the lemma. \end{proof}


\begin{lem}\label{action1} Let $\Lam\in P^+$ and $i,j_1,\cdots,j_k\in I$. Then
$$\wk_if_{j_1}\cdots f_{j_k}v_\Lam=\pi_i^{\<h_i,\Lam\>}q_i^{2\<h_i,\Lam-\alpha_{j_1}-\cdots-\alpha_{j_k}\>}f_{j_1}\cdots f_{j_k}v_\Lam$$
\end{lem}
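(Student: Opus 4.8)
The plan is to prove the identity by induction on $k$ (equivalently, by iterating the commutation relation between $\wk_i$ and the $f_j$'s), so that everything is reduced to the action of $\wk_i$ on the highest weight vector $v_\Lam$.

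First I would dispose of the base case $k=0$. Since $\mv(\Lam)$ is a $P$-weighted $\mcu(\fg)$-module and $v_\Lam$ has weight $\Lam$, the defining property of $P$-weighted modules gives $\wk_i v_\Lam=q_i^{2\<h_i,\Lam\>}\pi_i^{\<h_i,\Lam\>}v_\Lam$, which is precisely the asserted formula with the empty product. For the inductive step I would use the defining relation $\wk_i f_j\wk_i^{-1}=q_i^{-2a_{ij}}f_j$ from Definition \ref{ug}, rewritten as $\wk_i f_{j_1}=q_i^{-2a_{i j_1}}f_{j_1}\wk_i$. Applying this once, then the induction hypothesis to $\wk_i f_{j_2}\cdots f_{j_k}v_\Lam$, and using $a_{i j_1}=\<h_i,\alpha_{j_1}\>$, the exponent of $q_i$ accumulates to $-2\<h_i,\alpha_{j_1}\>+2\<h_i,\Lam-\alpha_{j_2}-\cdots-\alpha_{j_k}\>=2\<h_i,\Lam-\alpha_{j_1}-\cdots-\alpha_{j_k}\>$, while the factor $\pi_i^{\<h_i,\Lam\>}$ is unchanged because the $\wk_i$–$f_j$ relation carries no $\pi$. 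This gives the claim.

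There is essentially no obstacle here; the argument is just bookkeeping of exponents. The one point worth stating cleanly is that the $\pi_i$ in the final answer comes solely from the action of $\wk_i$ on the weight vector $v_\Lam$ (via the $P$-weighted structure) and not from commuting $\wk_i$ past the $f_{j_t}$'s, so assumption (\ref{evenodd}) is not needed at this step. Alternatively, the same computation can be done non-inductively: first move $\wk_i$ all the way to the right past $f_{j_1}\cdots f_{j_k}$, producing the single scalar $q_i^{-2\<h_i,\alpha_{j_1}+\cdots+\alpha_{j_k}\>}$, and then apply $\wk_i$ to $v_\Lam$.
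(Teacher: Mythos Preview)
Your proof is correct and essentially coincides with the paper's. The paper writes it in the non-inductive form you mention at the end: it inserts $\wk_i^{-1}\wk_i$ between consecutive $f_{j_t}$'s, applies $\wk_i f_{j_t}\wk_i^{-1}=q_i^{-2a_{ij_t}}f_{j_t}$ for each factor, and then uses $\wk_i v_\Lam=(q_i^2\pi_i)^{\<h_i,\Lam\>}v_\Lam$; your inductive version is the same computation unrolled one step at a time, and your observation that the $\pi_i$ contribution comes solely from the action on $v_\Lam$ (so (\ref{evenodd}) is not invoked here) is exactly right.
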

\begin{proof} $$
\begin{aligned}
{\rm LHS}&=\bigl(\wk_if_{j_1}\wk_i^{-1}\bigr)\bigl(\wk_if_{j_2}\wk_i^{-1}\bigr)\cdots \bigl(\wk_if_{j_k}\wk_i^{-1}\bigr)\wk_iv_\Lam\\
&=q_i^{-2a_{ij_1}}\cdots q_i^{-2a_{ij_k}}(q_i^2\pi_i)^{\<h_i,\Lam\>}f_{j_1}\cdots f_{j_k}v_\Lam={\rm RHS}.
\end{aligned}$$
This completes the proof of the lemma.
\end{proof}

For each monomial of the form $f_{j_1}\cdots f_{j_n}$, we use the notation $f_{j_1}\cdots \widehat{f_{j_k}}\cdots f_{j_n}$ to denote the monomial obtained by removing $f_{j_k}$ from the monomial  $f_{j_1}\cdots f_{j_n}$. That is, $$
f_{j_1}\cdots \widehat{f_{j_k}}\cdots f_{j_n}:=f_{j_1}\cdots f_{j_{k-1}}f_{j_{k+1}}\cdots f_{j_n}.
$$
Similarly, for any $\beta\in Q_n^+$ and $\nu=(\nu_1,\cdots,\nu_n)\in I^\beta$, we define $$
(\nu_1,\cdots,\widehat{\nu_k},\cdots,\nu_n):=(\nu_1,\cdots,\nu_{k-1},\nu_{k+1},\cdots,\nu_n)\in I^{\beta-\alpha_{\nu_k}}.
$$

\begin{lem}\label{action2}  Let $\Lam\in P^+$ and $\beta\in Q_n^+$. For any $i\in I$ and $(j_1,\cdots,j_n)\in I^\beta$, we have
$$
\begin{aligned}e_if_{j_n}\cdots f_{j_1}v_\Lam &=\sum_{\substack{1\leq k\leq n\\ j_k=i}}f_{j_n}\cdots\widehat{f_{j_k}}\cdots f_{j_1}\frac{1-\pi_i^{\<h_i,\Lam\>}q_i^{2\<h_i,\Lam-\beta+\sum_{k\leq l\leq n}\alpha_{j_l}\>}}{1-q_i^2\pi_i}\\
&\qquad\qquad\times \pi_i^{\sum_{k<l\leq n}\rmp(j_l)}q_i^{-\<h_i,\sum_{k<l\leq n}\alpha_{j_l}\>}v_\Lam,
\end{aligned}
$$
where the summation is understood as zero if none of the $j_k$ is equal to $i$.
\end{lem}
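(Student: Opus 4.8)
The plan is to compute $e_i f_{j_n}\cdots f_{j_1}v_\Lam$ by induction on $n$, pushing $e_i$ to the right past each $f_{j_l}$ in turn and using the commutation relation
$$
e_i f_j = \pi^{\rmp(i)\rmp(j)}q_i^{-a_{ij}}f_j e_i + \delta_{ij}\frac{1-\wk_i}{1-q_i^2\pi_i}
$$
from Definition \ref{ug}. First I would apply this relation once to move $e_i$ past $f_{j_n}$. This produces two terms: a "pass-through" term $\pi^{\rmp(i)\rmp(j_n)}q_i^{-a_{i j_n}} f_{j_n}\bigl(e_i f_{j_{n-1}}\cdots f_{j_1}v_\Lam\bigr)$, to which the inductive hypothesis (for $n-1$, with $\beta$ replaced by $\beta-\alpha_{j_n}$) applies, and a "cancellation" term which is nonzero only when $j_n=i$, equal to $f_{j_{n-1}}\cdots f_{j_1}\cdot\frac{1-\wk_i}{1-q_i^2\pi_i}v_\Lam$ — wait, one must be careful: the relation gives $e_i f_{j_n}$, so the $\delta$-term is $\delta_{i,j_n}\frac{1-\wk_i}{1-q_i^2\pi_i}f_{j_{n-1}}\cdots f_{j_1}v_\Lam$, and here $\wk_i$ must then be commuted to the right past $f_{j_{n-1}}\cdots f_{j_1}$ before hitting $v_\Lam$.

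The key computational inputs are thus: (i) Lemma \ref{action1}, which evaluates $\wk_i f_{j_{n-1}}\cdots f_{j_1}v_\Lam$ and hence handles the $\delta$-term, giving the $k=n$ summand with the correct factor $\frac{1-\pi_i^{\<h_i,\Lam\>}q_i^{2\<h_i,\Lam-\alpha_{j_1}-\cdots-\alpha_{j_{n-1}}\>}}{1-q_i^2\pi_i}$; and (ii) the inductive hypothesis applied to the pass-through term, which supplies all summands with $1\le k\le n-1$. I would then check that multiplying the inductive-hypothesis expression (for index set $(j_1,\dots,j_{n-1})$ and weight datum $\beta-\alpha_{j_n}$) by the scalar-and-operator prefactor $\pi^{\rmp(i)\rmp(j_n)}q_i^{-a_{i j_n}}f_{j_n}$ reproduces exactly the $1\le k\le n-1$ summands of the claimed formula for $n$: the extra $f_{j_n}$ sits in the correct place in $f_{j_n}\cdots\widehat{f_{j_k}}\cdots f_{j_1}$ since $k<n$, while the exponents $\pi_i^{\sum_{k<l\le n}\rmp(j_l)}$ and $q_i^{-\<h_i,\sum_{k<l\le n}\alpha_{j_l}\>}$ each pick up precisely the $l=n$ contribution (note $a_{i j_n}=\<h_i,\alpha_{j_n}\>$), and the numerator $1-\pi_i^{\<h_i,\Lam\>}q_i^{2\<h_i,\Lam-\beta+\sum_{k\le l\le n}\alpha_{j_l}\>}$ is unchanged because replacing $\beta$ by $\beta-\alpha_{j_n}$ and restricting the sum to $k\le l\le n-1$ leaves $\Lam-\beta+\sum_{k\le l\le n}\alpha_{j_l}$ intact.

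The main obstacle I expect is purely bookkeeping: keeping the three families of sign/parity factors ($\pi^{\rmp(i)\rmp(j_l)}$ from each commutation, versus the accumulated $\pi_i^{\sum_{k<l\le n}\rmp(j_l)}$ in the target, plus whatever parity twist enters when $\wk_i$ is dragged through) consistent, and making sure the $q_i$-powers telescope correctly through the identity $q^{-(\alpha_i|\alpha_j)}=q_i^{a_{ij}}$. Since $\pi_i=\pi^{\rmp(i)}$ and $\pi^2=\text{(the image of }x^4\text{, i.e. }1)$ in $\Z[\pi]$... actually $\pi=x^2$ so $\pi^2=x^4=1$; hence $\pi^{\rmp(i)\rmp(j_l)}=\pi_i^{\rmp(j_l)}$, which is exactly the form that accumulates into $\pi_i^{\sum_{k<l\le n}\rmp(j_l)}$ — this is the reconciliation that makes the two expressions match. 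Once this identification is in place the induction closes, with the base case $n=1$ being a direct application of the $e_if_{j_1}$ relation together with $e_i v_\Lam=0$ and Lemma \ref{action1} (or $\wk_i v_\Lam=\pi_i^{\<h_i,\Lam\>}q_i^{2\<h_i,\Lam\>}v_\Lam$).
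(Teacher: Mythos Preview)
Your proposal is correct and follows essentially the same approach as the paper: induction on $n$, applying the commutation relation $e_if_j=\pi^{\rmp(i)\rmp(j)}q_i^{-a_{ij}}f_je_i+\delta_{ij}\frac{1-\wk_i}{1-q_i^2\pi_i}$ to peel off $f_{j_n}$, invoking the induction hypothesis on the pass-through term and Lemma~\ref{action1} on the $\delta$-term. Your explicit remark that $\pi^{\rmp(i)\rmp(j_l)}=\pi_i^{\rmp(j_l)}$ is exactly the identification the paper uses silently when it writes the commutator with $\pi_i^{\rmp(j_n)}$ rather than $\pi^{\rmp(i)\rmp(j_n)}$.
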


\begin{proof} We use induction on $n$. If $n=1$, then the lemma holds by Definition \ref{ug} and (\ref{2ki}). Now suppose that $n>1$. We have that $$\begin{aligned}
{\rm LHS}&=(\pi_i^{\rmp(j_n)}q_i^{-a_{ij_n}}f_{j_n}e_i+\delta_{i,j_n}\frac{1-\tilde{K_i}}{1-q^2_i\pi_i})f_{j_{n-1}}\cdots f_{j_1}v_\Lam\\
&=\pi_i^{\rmp(j_n)}q_i^{-a_{ij_n}}f_{j_n}\Bigl(\sum_{\substack{1\leq k\leq n-1\\ j_k=i}}f_{j_{n-1}}\cdots\widehat{f_{j_k}}\cdots f_{j_1}\frac{1-\pi_i^{\<h_i,\Lam\>}q_i^{2\<h_i,\Lam-\beta+\alpha_{j_n}+\sum_{k\leq l\leq n-1}\alpha_{j_l}\>}}{1-q_i^2\pi_i}\\
&\qquad\qquad\times \pi_i^{\sum_{k<l\leq n-1}\rmp(j_l)}q_i^{-\<h_i,\sum_{k<l\leq n-1}\alpha_{j_l}\>}\Bigr)v_\Lam+\delta_{i,j_n}\frac{1-\tilde{K_i}}{1-q^2_i\pi_i}f_{j_{n-1}}\cdots f_{j_1}v_\Lam\\
&=\sum_{\substack{1\leq k\leq n-1\\ j_k=i}}f_{j_{n}}\cdots\widehat{f_{j_k}}\cdots f_{j_1}\frac{1-\pi_i^{\<h_i,\Lam\>}q_i^{2\<h_i,\Lam-\beta+\sum_{k\leq l\leq n}\alpha_{j_l}\>}}{1-q_i^2\pi_i}\pi_i^{\sum_{k<l\leq n}\rmp(j_l)}q_i^{-\<h_i,\sum_{k<l\leq n}\alpha_{j_l}\>}v_\Lam\\
&\qquad\qquad  +\delta_{i,j_n}\frac{1-\pi_i^{\<h_i,\Lam\>}q_i^{2\<\Lam-\alpha_{j_1}-\cdots-\alpha_{j_{n-1}}\>}}{1-q^2_i\pi_i}f_{j_{n-1}}\cdots f_{j_1}v_\Lam\\
&={\rm RHS}.
\end{aligned}
$$
where the second equality follows from induction hypothesis and we used Lemma \ref{action1} in the third equality.
\end{proof}

\begin{lem}\label{quanteq}
Let $\Lam\in P^+$ and $\beta\in Q_n^+$. For any $i\in I$, we have
$$
\frac{1-\pi_i^{\<h_i,\Lam\>}q_i^{2\<h_i,\Lam-\beta\>}}{1-q_i^2\pi_i}=[\<h_i,\Lam-\beta\>]^\pi_iq_i^{\<h_i,\Lam-\beta\>-1} .
$$
\end{lem}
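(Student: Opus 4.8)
The plan is to verify the identity by a direct computation in $\Z[\pi]$, after one preliminary simplification of the exponent of $\pi_i$ on the left-hand side. First I would reduce $\pi_i^{\<h_i,\Lam\>}$ to $\pi_i^{\<h_i,\Lam-\beta\>}$. Writing $\beta=\sum_{j\in I}k_j\alpha_j$ with $k_j\in\Z_{\geq 0}$, one has $\<h_i,\beta\>=\sum_j k_j a_{ij}$. If $i\in I_{\mathrm{odd}}$ then the condition (\ref{evenodd}) forces $a_{ij}\in 2\Z$ for all $j$, so $\<h_i,\beta\>$ is even; if $i\in I_{\mathrm{even}}$ then $\pi_i=\pi^{\overline 0}=1$. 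In either case $\pi_i^{\<h_i,\beta\>}=\pi^{\rmp(i)\<h_i,\beta\>}=1$, since $\pi^2=1$ in $\Z[\pi]$. Hence $\pi_i^{\<h_i,\Lam\>}=\pi_i^{\<h_i,\Lam\>}\pi_i^{-\<h_i,\beta\>}=\pi_i^{\<h_i,\Lam-\beta\>}$, so, writing $m:=\<h_i,\Lam-\beta\>$, the left-hand side equals $(1-\pi_i^{m}q_i^{2m})/(1-q_i^2\pi_i)$.

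It then remains to check that $(1-\pi_i^{m}q_i^{2m})/(1-q_i^2\pi_i)=[m]^\pi_i q_i^{m-1}$, which I would do by expanding the definition in (\ref{Gauss2}) (read verbatim for an arbitrary integer $m$, as $m$ may a priori be negative):
\[
[m]^\pi_i q_i^{m-1}=\frac{(\pi_iq_i)^m-q_i^{-m}}{\pi_iq_i-q_i^{-1}}\cdot q_i^{m-1}=\frac{\pi_i^mq_i^{2m-1}-q_i^{-1}}{\pi_iq_i-q_i^{-1}}=\frac{\pi_i^mq_i^{2m}-1}{\pi_iq_i^{2}-1},
\]
the last step multiplying numerator and denominator by $q_i$; and $(\pi_i^mq_i^{2m}-1)/(\pi_iq_i^{2}-1)$ is visibly equal to $(1-\pi_i^{m}q_i^{2m})/(1-q_i^2\pi_i)$, which finishes the verification.

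I do not expect any genuine obstacle here: the statement is an elementary identity of Laurent polynomials. The only point requiring attention — and the only place the super-datum hypotheses enter — is the first step, where (\ref{evenodd}) together with $\pi^2=1$ is exactly what permits rewriting the power of $\pi_i$ so that its exponent matches that of $q_i^2$; with that done, the identity reduces immediately to the definition of the quantum integer $[m]^\pi_i$.
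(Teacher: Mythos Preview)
Your proof is correct and follows essentially the same route as the paper: both reduce the $\pi_i$-exponent using the fact that $\pi_i^{\langle h_i,\alpha_k\rangle}=1$ for every simple root $\alpha_k$ (which is exactly your observation $\pi_i^{\langle h_i,\beta\rangle}=1$), and then verify the resulting identity by unwinding the definition of $[m]_i^\pi$. The only cosmetic difference is the order---the paper performs the algebraic manipulation first and isolates the $\pi_i$-issue afterward, whereas you handle the exponent of $\pi_i$ up front---but the content is the same.
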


\begin{proof}
We have $$\frac{1-\pi_i^{\<h_i,\Lam\>}q_i^{2\<h_i,\Lam-\beta\>}}{1-q_i^2\pi_i}=\frac{q_i^{-\<h_i,\Lam-\beta\>}-\pi_i^{\<h_i,\Lam\>}q_i^{\<h_i,\Lam-\beta\>}}{q_i^{-1}-q_i\pi_i}\frac{q_i^{\<h_i,\Lam-\beta\>}}{q_i}.
$$
Hence, to complete the proof, we only need to show for any simple root $\alpha_k$, $$
\pi_i^{\<h_i,\alpha_k\>}=1 .
$$
Actually, if $i\in I_{\rm{even}}$, then $\pi_i=1$; if $i\in I_{\rm{odd}}$, then by definition, $\<h_i,\alpha_k\>$ is even, hence $\pi_i^{\<h_i,\alpha_k\>}=1$.
\end{proof}

\begin{dfn}{\rm (\cite[Definition 3.2]{HS}\label{keydfn1})} For any $w\in\Sym_n$, $t\in\{1,2,\cdots,n\}$, we define $$
J_w^{<t}:=\{1\leq j<t|w(j)<w(t)\} .
$$
Let $\Lam\in P^+$. For any $\nu=(\nu_1,\cdots,\nu_n)\in I^n$ and $1\leq t\leq n$, we define  \begin{equation}\label{Ndef}
N^\Lam(w,\nu,t):=\<h_{\nu_t},\Lam-\sum_{j\in J_w^{<t}}\alpha_{\nu_j}\>.
\end{equation}
For any $\nu,\nu'\in I^n$, we define $\Sym(\nu,\nu'):=\big\{w\in\Sym_n | w\nu =\nu' \big\}$.
\end{dfn}

\begin{dfn} Let $w\in\Sym_n$. For any $\nu=(\nu_1,\cdots,\nu_n)\in I^n$ and $1\leq t\leq n$, we define \begin{equation}\label{pdef}
\rmp(w,\nu,t):=\sum_{\substack{1\leq k<t \\ w(k)>w(t)}}\rmp(\nu_k).
\end{equation}
\end{dfn}

\begin{lem}{\rm (\cite[Lemma 3.5]{HS})}\label{eqa1} Let $\Lam\in P^+$ and $\nu,\nu'\in I^n$. For any $w\in\Sym(\nu,\nu')$ and $1\leq t\leq n$, we have that $$N^\Lam(w,\nu,t)=\<h_{\nu_{t}},\Lam-\sum_{\substack{1\leq j<w(t),\\ j\in \{w (1),\cdots\, ,w (t-1)\}}}\alpha_{\nu'_{j}}\>.$$
\end{lem}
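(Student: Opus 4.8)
The plan is to show that the two index sets appearing in the two expressions for $N^\Lam(w,\nu,t)$ are carried onto each other by $w$, so that the corresponding sums of simple roots agree term by term. Concretely, recall that $J_w^{<t}=\{1\le j<t\mid w(j)<w(t)\}$, so the first expression reads $\langle h_{\nu_t},\Lam-\sum_{j\in J_w^{<t}}\alpha_{\nu_j}\rangle$. The second expression sums $\alpha_{\nu'_k}$ over those $k$ with $1\le k<w(t)$ and $k\in\{w(1),\dots,w(t-1)\}$. First I would observe that $w$ restricts to a bijection from $\{1,\dots,t-1\}$ onto $\{w(1),\dots,w(t-1)\}$, and under this bijection the condition $w(j)<w(t)$ on the source corresponds exactly to the condition "$k<w(t)$" on the target when we write $k=w(j)$. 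Hence $w$ maps $J_w^{<t}$ bijectively onto $\{1\le k<w(t)\mid k\in\{w(1),\dots,w(t-1)\}\}$.

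The second step is to match the summands. For $j\in J_w^{<t}$ put $k=w(j)$; since $w\in\Sym(\nu,\nu')$ means $w\nu=\nu'$, the convention for the permutation action on tuples gives $\nu'_{w(j)}=\nu_j$ (equivalently $\nu'_k=\nu_{w^{-1}(k)}$), so $\alpha_{\nu_j}=\alpha_{\nu'_k}$. Therefore
$$
\sum_{j\in J_w^{<t}}\alpha_{\nu_j}=\sum_{\substack{1\le k<w(t),\\ k\in\{w(1),\dots,w(t-1)\}}}\alpha_{\nu'_k},
$$
and since also $\nu_t=\nu'_{w(t)}$ forces $h_{\nu_t}=h_{\nu'_{w(t)}}$ (though here we only need $h_{\nu_t}$ on both sides, which is literally the same symbol), applying $\langle h_{\nu_t},\Lam-(-)\rangle$ to this identity yields the claimed equality.

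The only point requiring care — and the main (minor) obstacle — is pinning down the exact convention for how $\Sym_n$ acts on $I^n$, i.e. whether $w\nu$ means $(w\nu)_k=\nu_{w^{-1}(k)}$ or $(w\nu)_k=\nu_{w(k)}$, since the bookkeeping in the displayed sums depends on it; I would fix this at the outset consistent with Definition \ref{keydfn1} and the usage in \cite{HS}, after which the argument is the elementary reindexing above. (Indeed this is exactly \cite[Lemma 3.5]{HS}, transported verbatim to the super setting, since the parity plays no role in this statement.)
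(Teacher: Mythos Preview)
Your proposal is correct: the reindexing $j\mapsto k=w(j)$ gives a bijection from $J_w^{<t}$ onto $\{1\le k<w(t)\mid k\in\{w(1),\dots,w(t-1)\}\}$, and the convention $(w\nu)_{w(j)}=\nu_j$ (consistent with the paper's usage, cf.\ the proof of Lemma~\ref{pequ}) matches the summands. The paper itself gives no proof here---it simply cites \cite[Lemma~3.5]{HS}---so your elementary argument is exactly the expected one.
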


\begin{lem}\label{pequ} Let $\nu,\nu'\in I^n$. For any $w\in \Sym(\nu,\nu')$ and $1\leq t\leq n$, we have $$
\rmp(w,\nu,t)=\sum_{\substack{w(t)<k\leq n\\k\in \{w (1),\cdots\, ,w (t-1)\} }}\rmp(\nu'_k).
$$
\end{lem}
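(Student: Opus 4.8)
The plan is to prove this by a direct bijection argument on the index sets, completely parallel to the reasoning behind Lemma \ref{eqa1}. Fix $w \in \Sym(\nu,\nu')$ and $1 \leq t \leq n$, so that $\nu' = w\nu$, i.e. $\nu'_k = \nu_{w^{-1}(k)}$ for all $k$. By definition \eqref{pdef} we have $\rmp(w,\nu,t) = \sum_{k} \rmp(\nu_k)$ summed over those $k$ with $1 \leq k < t$ and $w(k) > w(t)$. I would reindex this sum by setting $j = w(k)$: as $k$ ranges over $\{1,\dots,t-1\}$, the value $j = w(k)$ ranges over $\{w(1),\dots,w(t-1)\}$, and the two conditions $k < t$ (automatic under this reindexing) and $w(k) > w(t)$ translate into: $j \in \{w(1),\dots,w(t-1)\}$ and $j > w(t)$. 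Since $k = w^{-1}(j)$, the summand $\rmp(\nu_k) = \rmp(\nu_{w^{-1}(j)}) = \rmp(\nu'_j)$. Renaming the dummy variable $j$ back to $k$ gives exactly the right-hand side $\sum_{w(t) < k \leq n,\ k \in \{w(1),\dots,w(t-1)\}} \rmp(\nu'_k)$, where the upper bound $k \leq n$ is harmless since every element of $\{w(1),\dots,w(t-1)\}$ lies in $\{1,\dots,n\}$ anyway.

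The key steps, in order, are: (1) record that $\nu'_j = \nu_{w^{-1}(j)}$ from $w\nu = \nu'$; (2) apply the substitution $k \mapsto j = w(k)$, which is a bijection from $\{1,\dots,t-1\}$ onto $\{w(1),\dots,w(t-1)\}$; (3) check that the constraint $w(k) > w(t)$ becomes $j > w(t)$ and that $k < t$ is subsumed into $j \in \{w(1),\dots,w(t-1)\}$; (4) rewrite the summand via step (1) and rename the index. This is entirely formal once the dictionary between the two descriptions is set up.

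I do not expect any genuine obstacle here; the lemma is the parity analogue of Lemma \ref{eqa1}, whose proof (cited from \cite[Lemma 3.5]{HS}) uses precisely the same reindexing, and the only thing that changes is that we track the parity values $\rmp(\nu_k)$ rather than the roots $\alpha_{\nu_k}$. The one point worth a moment's care is the bookkeeping of which index set the summation runs over after the substitution — specifically confirming that "$1 \leq k < t$ with $w(k) > w(t)$" and "$k \in \{w(1),\dots,w(t-1)\}$ with $k > w(t)$" are carried to one another correctly — but this is exactly the combinatorial content already used for Lemma \ref{eqa1}, so I would simply invoke that parallel and present the substitution cleanly.
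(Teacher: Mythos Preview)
Your proof is correct and follows essentially the same approach as the paper: both arguments reindex the defining sum via the bijection $k \mapsto w(k)$ and use that $w\in\Sym(\nu,\nu')$ gives $\nu'_{w(k)} = \nu_k$. The paper's version is simply more terse, stating the key identity $\nu'_{w(i)} = \nu_i$ and then declaring the result immediate from the definition.
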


\begin{proof} For any $w(t)<k=w(i)\leq n$ with $i\in \{1,\cdots\, ,t-1\}$, we have $\nu'_k=\nu'_{w(i)}=\nu_i$ because $w\in\Sym(\nu,\nu')$.
The lemma follows at once from the definition of $\rmp(w,\nu,t)$.
\end{proof}

\medskip
\noindent
{\bf{Proof of Theorem \ref{mainthmA}}:} We claim that $$\begin{aligned}
&\quad\,\dim_q^{\pi}e(\nu)\RR^\Lam(\beta)e(\nu')\\
&=\sum_{\substack{(k_1,\cdots,k_n)\in\Sym_n(1,\cdots,n)\\ \nu_i=\nu'_{k_i}, \forall\,1\leq i\leq n}}\prod_{t=1}^{n}\Biggl(
\Bigl[\bigl(\Lam-\sum\limits_{\substack{1\leq i<k_t\\ i\neq k_s,\forall\,t\leq s\leq n}}\alpha_{\nu'_i}\bigr)(h_{\nu_t})\Bigr]^\pi_{\nu_t}q_{\nu_t}^{N^{\Lam}(1,\nu,t)-1}\pi_{\nu_t}^{\sum\limits_{\substack{i>k_t\\ i\neq k_s,\forall\,t\leq s\leq n}}\rmp(\nu'_{i})}\Biggr)
\end{aligned}
$$

We use induction on $|\beta|$. Suppose that the claim holds for any $\beta\in Q_{n-1}^+$. Now we assume $\beta\in Q_n^+$. Applying Lemmas \ref{keylem1}, \ref{action1}, \ref{action2} and \ref{quanteq}, we get that $$\begin{aligned}
&\quad\, \Bigl(\dim_{q}^\pi e(\nu)\RR^{\Lam}(\beta)e(\nu')\Bigl)v_{\Lam}\\
&=e_{\nu_1}\cdots\, e_{\nu_n}f_{\nu'_n} \cdots\, f_{\nu'_1}v_{\Lam}\\
&=\sum_{\substack{1\leq k_n\leq n\\ \nu_n=\nu'_{k_n}}}\frac{1-\pi_{\nu_n}^{\<h_{\nu_n},\Lam\>}q_{\nu_n}^{2\<h_{\nu_n},\Lam-\beta+\sum_{l\geq k_n}\alpha_{\nu'_l}\>}}{1-q_{\nu_n}^2\pi_{\nu_n}}\pi_{\nu_n}^{\sum_{l>k_n}\rmp(\nu'_l)}q_{\nu_n}^{-\<h_{\nu_n},\sum_{l>k_n}\alpha_{\nu'_l}\>}e_{\nu_{1}}\cdots\, e_{\nu_{n-1}}f_{\nu'_n} \cdots\, \widehat{f_{\nu'_{k_n}}} \\
&\qquad\qquad \times\cdots\times f_{\nu'_1}v_{\Lam}\\
&=\sum_{\substack{1\leq k_n\leq n\\ \nu_n=\nu'_{k_n}}} [\<h_{\nu_n},\Lam-\beta+\sum_{l\geq k_n}\alpha_{\nu'_l}\>]^\pi_{\nu_n}q_{\nu_n}^{\<h_{\nu_n},\Lam-\beta+\sum_{l\geq k_n}\alpha_{\nu'_l}\>-1}\pi_{\nu_n}^{\sum_{l>k_n}\rmp(\nu'_l)}q_{\nu_n}^{-\<h_{\nu_n},\sum_{l>k_n}\alpha_{\nu'_l}\>}\\
&\qquad\quad\times \dim_q^\pi e(\nu_1,\cdots,\nu_{n-1})\RR^\Lam(\beta-\alpha_{\nu_n})e(\nu'_1,\cdots,\widehat{\nu'_{k_n}},\cdots,\nu'_n)v_{\Lam}\\
&=\sum_{\substack{1\leq k_n\leq n\\ \nu_n=\nu'_{k_n}}}[\<h_{\nu_n},\Lam-\beta+\sum_{l\geq k_n}\alpha_{\nu'_l}\>]^\pi_{\nu_n}\pi_{\nu_n}^{\sum_{l>k_n}\rmp(\nu'_l)}{q_{\nu_{n}}^{1+(\Lambda-\beta)(h_{\nu_{n}})}}
\\
&\qquad\quad\times\Bigl(\dim_q^\pi e(\nu_1,\cdots,\nu_{n-1})\RR^{\Lam}(\beta-\alpha_{\nu_n})e(\nu'_1,\cdots,\widehat{\nu'_{k_n}} \cdots\, \nu'_n)\Bigl)v_{\Lam}.
\end{aligned} $$
It follows that  \begin{equation}\label{ind}\begin{aligned}
\dim_{q}^\pi\,e(\nu)\RR^{\Lam}(\beta)e(\nu')
&=\sum_{\substack{1\leq k_n\leq n\\ \nu_n=\nu'_{k_n}}}{q_{\nu_{n}}^{1+(\Lambda-\beta)(h_{\nu_{n}})}}\Bigl[(\Lam-\sum\limits_{i=1}^{k_{n}-1}\alpha_{\nu'_i})(h_{\nu_n})\Bigr]_{\nu_n}^\pi \pi_{\nu_n}^{\sum_{l>k_n}\rmp(\nu'_l)}\\
&\qquad\qquad\times\dim_q^\pi e(\nu_1,\cdots,\nu_{n-1})\RR^{\Lam}(\beta-\alpha_{\nu_n})e(\nu'_1,\cdots,\widehat{\nu'_{k_n}} \cdots\, \nu'_n).
\end{aligned}\end{equation}

We define $\tilde{\nu}'=(\tilde{\nu}'_1,\cdots,\tilde{\nu}'_{n-1}):=(\nu'_1,\cdots,\widehat{\nu'_{k_n}},\cdots,\nu'_n)$. Applying induction hypothesis, we can deduce that $$
\begin{aligned}
&\quad\, \Bigl(\dim_q^\pi e(\nu_1,\cdots\,\nu_{n-1})\RR^{\Lambda}(\beta-\alpha_{\nu_n})e(\nu'_1,\cdots,\widehat{\nu'_{k_n}},\cdots,\nu'_n)\Bigr)v_{\Lambda}\\
&=\Bigl(\dim_q^\pi e(\nu_1,\cdots\,\nu_{n-1})\RR^{\Lambda}(\beta-\alpha_{\nu_n})e(\tilde{\nu}'_1,\cdots,\tilde{\nu}'_{n-1})\Bigl)v_{\Lambda}\\
&=\sum_{\substack{(\tilde{k}_1,\cdots,\tilde{k}_{n-1})\in\Sym_{n-1}(1,\cdots,n-1)\\ \nu_i=\tilde{\nu}'_{\tilde{k}_i}, \forall\,1\leq i\leq n-1}}\prod_{t=1}^{n-1}
\Bigl(\Bigl[\bigl(\Lam-\sum\limits_{\substack{1\leq i<\tilde{k}_t\\ i\neq \tilde{k}_s,\forall\,t\leq s\leq n-1}}\alpha_{\tilde{\nu}'_i}\bigr)(h_{\nu_t})\Bigl]_{\nu_t}^\pi q_{\nu_t}^{N^{\Lam}(1,\nu,t)-1}\pi_{\nu_t}^{\sum\limits_{\substack{i>\tilde{k}_t\\ i\neq \tilde{k}_s,\forall\,t\leq s\leq n-1}}\rmp(\tilde{\nu}'_{i})}\Bigr)
v_{\Lambda}.
\end{aligned}
$$
For any given integer $1\leq k_n\leq n$, there is an associated natural bijection $\theta_{k_n}$ from the set $$
\Bigl\{(k_1,\cdots,k_{n-1})\Bigm|\begin{matrix}\text{$1\leq k_1,\cdots,k_{n-1}\leq n$, $\nu_i=\nu'_{k_i},\forall\,1\leq i\leq n-1$}\\ \text{$k_n\neq k_a\neq k_b, \forall\,1\leq a\neq b<n$}
\end{matrix}\Bigr\}
$$
onto the set $$
\Bigl\{(\tilde{k}_1,\cdots,\tilde{k}_{n-1})\Bigm|\begin{matrix}\text{$(\tilde{k}_1,\cdots,\tilde{k}_{n-1})\in\Sym_{n-1}(1,2,\cdots,n-1)$,}\\ \text{$\nu_i=\tilde{\nu}'_{\tilde{k}_i},\forall\,1\leq i\leq n-1$.}
\end{matrix}\Bigr\}
$$
which is defined by $$
\theta_{k_n}(k_1,\cdots,k_{n-1})=(\tilde{k}_1,\cdots,\tilde{k}_{n-1}),\quad
\tilde{k}_j:=\begin{cases}k_j, &\text{if $k_j<k_n$;}\\ k_j-1, &\text{if $k_j>k_n$.}\end{cases}\,\,\forall\,1\leq j\leq n-1.
$$
With this bijection $\theta_{k_n}$ in mind, we can deduce from the above calculation that $$
\begin{aligned}
&\quad\, \Bigl(\dim_q^\pi e(\nu_1,\cdots\,\nu_{n-1})\RR^{\Lambda}(\beta-\alpha_{\nu_n})e(\nu'_1,\cdots,\widehat{\nu'_{k_n}},\cdots,\nu'_n)\Bigl)v_{\Lambda}\\
&=\sum_{\substack{1\leq k_1,\cdots,k_{n-1}\leq n\\ \nu_i={\nu}'_{k_i}, \forall\,1\leq i\leq n-1\\ k_n\neq k_a\neq k_b,\forall\,1\leq a\neq b<n}}\prod_{t=1}^{n-1}
\Bigl(\Bigl[\bigl(\Lam-\sum\limits_{\substack{1\leq i<k_t\\ i\neq k_s,\forall\,t\leq s\leq n-1}}\alpha_{\nu'_i}\bigr)(h_{\nu_t})\Bigr]_{\nu_t}^\pi q_{\nu_t}^{N^{\Lam}(1,\nu,t)-1}\Bigr)\pi_{\nu_t}^{\sum\limits_{\substack{i>k_t\\ i\neq k_s,\forall\,t\leq s\leq n-1}}\rmp(\nu'_{i})}
v_{\Lambda}.
\end{aligned}
$$
Combining this with the equality (\ref{ind}), we prove our claim.

Finally, $\{k_1,\cdots,k_n\}$ is a permutation of $\{1,\cdots,n\}$ and $\nu_i={\nu}'_{k_i}, \forall\,1\leq i\leq n$ mean that there exists $w\in\Sym(\nu,\nu')$ such that $k_j=w(j)$, $\forall\,1\leq j\leq n$. Then it is clear that the theorem follows from our above claim and Lemma \ref{eqa1} and Lemma \ref{pequ}.
\qed\medskip

If we forget the $\Z_2$-grading or even the whole $(\Z\times\Z_2)$-grading, then we get the following corollary which recovers the results in \cite[Theorem 1.1, (1.2)]{HS}.

\begin{cor}\label{smaedimcor} Let $\Lam\in P^+$ and $\beta\in Q_n^+$. Let $\nu=(\nu_1,\cdots,\nu_n),\nu'=(\nu'_1,\cdots,\nu'_n)\in I^\beta$.
\begin{equation}\label{gradeddim}
\dim_{q}e(\nu)\RR^\Lam(\beta)e(\nu')=\sum_{\substack{w\in\Sym(\nu,\nu')}}\prod_{t=1}^{n}\Bigl([N^{\Lam}(w,\nu,t)]_{\nu_t}
q_{\nu_t}^{N^{\Lam}(1,\nu,t)-1}\Bigr);
\end{equation}

\begin{equation}\label{ungradeddim}
\dim e(\nu)\RR^{\Lambda}(\beta)e(\nu')=\sum\limits_{w\in\Sym(\nu,\nu')}\prod\limits_{t=1}^{n}N^\Lam(w,\nu,t).
\end{equation}
\end{cor}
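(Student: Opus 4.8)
The plan is to deduce Corollary~\ref{smaedimcor} from Theorem~\ref{mainthmA} by specializing first the variable $\pi$ and then the variable $q$. Recall that $\Z[\pi]=\Z[x]/\langle x^4-1\rangle$ with $\pi=x^2$, so that $\pi^2=1$; consequently there is a well-defined ring homomorphism $\eps\colon\Z[q^{\pm1}]^\pi\to\Z[q^{\pm1}]$ fixing $q$ and sending $\pi\mapsto1$. The first step is the elementary remark that for any $M=\oplus_{a\in\Z}(M_{a,\overline{0}}\oplus M_{a,\overline{1}})\in\rps(\RR^\Lam(\beta))$ one has $\eps\bigl(\dim_q^\pi M\bigr)=\sum_{a\in\Z}\bigl(\dim_\bk M_{a,\overline{0}}+\dim_\bk M_{a,\overline{1}}\bigr)q^a=\dim_q M$, the graded dimension of $M$ regarded merely as a $\Z$-graded module (the $\Z_2$-grading forgotten); this is immediate from the definition of $\dim_q^\pi$.

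Next I would apply $\eps$ to both sides of the equality in Theorem~\ref{mainthmA}. By the preceding remark the left hand side becomes $\dim_q e(\nu)\RR^\Lam(\beta)e(\nu')$. On the right hand side, for every $i\in I$ we have $\eps(\pi_i)=\eps(\pi^{\rmp(i)})=1$; hence, directly from the formula~(\ref{Gauss2}) defining $[m]_i^\pi$, we get $\eps\bigl([m]_{\nu_t}^\pi\bigr)=[m]_{\nu_t}$ and $\eps\bigl(\pi_{\nu_t}^{\rmp(w,\nu,t)}\bigr)=1$, while the powers $q_{\nu_t}^{N^\Lam(1,\nu,t)-1}$ are left unchanged. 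Carrying out these substitutions term by term in the sum over $w\in\Sym(\nu,\nu')$ yields exactly (\ref{gradeddim}).

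Finally, for (\ref{ungradeddim}) I would compose with the evaluation homomorphism $\Z[q^{\pm1}]\to\Z$, $q\mapsto1$, and apply it to (\ref{gradeddim}). On the left hand side $\dim_q e(\nu)\RR^\Lam(\beta)e(\nu')$ becomes the total dimension $\dim e(\nu)\RR^\Lam(\beta)e(\nu')$. On the right hand side $q_{\nu_t}=q^{\rd_{\nu_t}}\mapsto1$ kills each factor $q_{\nu_t}^{N^\Lam(1,\nu,t)-1}$, and $[m]_{\nu_t}\mapsto m$ for every integer $m$ (for $m\geq0$ this is the identity $[m]_i=q_i^{m-1}+q_i^{m-3}+\cdots+q_i^{-(m-1)}$ evaluated at $q=1$, and for $m<0$ it follows from $[m]_i=-[-m]_i$). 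Hence each product $\prod_{t=1}^n\bigl([N^\Lam(w,\nu,t)]_{\nu_t}q_{\nu_t}^{N^\Lam(1,\nu,t)-1}\bigr)$ collapses to $\prod_{t=1}^nN^\Lam(w,\nu,t)$, giving (\ref{ungradeddim}).

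There is no genuine obstacle here: the whole content of the corollary is the observation that $\eps$ and the evaluation $q\mapsto1$ are ring homomorphisms under which $\dim_q^\pi$, $\dim_q$ and $\dim$ correspond to one another, which is immediate from the definitions. The single point deserving a word of care is that the quantum integers occurring in Theorem~\ref{mainthmA} may have negative argument $N^\Lam(w,\nu,t)$ --- that identity is asserted for \emph{all} $\nu,\nu'\in I^\beta$, including those with $e(\nu)\RR^\Lam(\beta)e(\nu')=0$ --- so one should record that the specialization formulas $[m]_i^\pi\mapsto[m]_i\mapsto m$ hold for every $m\in\Z$, not merely for $m\geq0$.
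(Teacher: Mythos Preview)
Your proposal is correct and is precisely the argument the paper has in mind: the authors simply remark that forgetting the $\Z_2$-grading (i.e.\ setting $\pi\mapsto 1$) and then the $\Z$-grading (i.e.\ setting $q\mapsto 1$) in Theorem~\ref{mainthmA} yields the corollary. Your additional care about the specializations $[m]_i^\pi\mapsto[m]_i\mapsto m$ being valid for all $m\in\Z$ (not just $m\geq 0$) is a worthwhile observation that the paper leaves implicit.
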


\medskip
\noindent
{\bf{Proof of Corollary \ref{maincorA1}}:} It is clear that $e(\nu)\neq 0$ in $\R(\beta)$ if and only if $\dim e(\nu)\RR^{\Lambda}(\beta)e(\nu)\neq 0$. So Corollary \ref{maincorA1} follows from Corollary \ref{smaedimcor}.
\qed\medskip

\begin{examp}
We consider the type $B_2$ case, i.e. $$A=\begin{pmatrix}2\,&-2\\
-1\,&2\end{pmatrix}
$$ and let $1\in I_{\rm odd},\,2\in I_{\rm even}$. We choose $\nu=\nu'=(1,2,1),\,\Lam=2\Lam_1+\Lam_2$. Then one can easily check $$\Sym(\nu,\nu)=(1,(1,3)),$$ where $(1,3)$ denotes the transposition which swaps $1$ and $3$. In this case, $\rmp(1)=\overline{1}, \rmp(2)=\overline{0},\,d_1=1,\,d_2=2$, and $$\begin{aligned}
&N^\Lam(1,\nu,1)=N^\Lam(1,\nu,2)=N^\Lam(1,\nu,3)=2;\\
&N^\Lam((1,3),\nu,1)=N^\Lam((1,3),\nu,3)=2,\quad N^\Lam((1,3),\nu,2)=1;\\
&\rmp^\Lam(1,\nu,1)=\rmp^\Lam(1,\nu,2)=\rmp^\Lam(1,\nu,3)=\overline{0};\\
&\rmp^\Lam((1,3),\nu,1)=\overline{0},\quad \rmp^\Lam((1,3),\nu,2)=\rmp^\Lam((1,3),\nu,3)=\overline{1}.
\end{aligned}
$$ By Theorem \ref{mainthmA}, we have$$\begin{aligned}
\dim_q^\pi e(\nu)\RR^\Lam(2\alpha_1+\alpha_2) e(\nu)=&(\frac{(\pi q)^2-q^{-2}}{\pi q-q^{-1}}q)\cdot(\frac{q^4-q^{-4}}{q^2-q^{-2}}q^2)\cdot (\frac{(\pi q)^2-q^{-2}}{\pi q-q^{-1}}q)\\
&\quad\quad +(\frac{(\pi q)^2-q^{-2}}{\pi q-q^{-1}}q)\cdot q^2 \cdot (\frac{(\pi q)^2-q^{-2}}{\pi q-q^{-1}}q\pi)\\
=&q^8+3\pi q^6+4q^4+3\pi q^2+1.
\end{aligned}
$$
\end{examp}

\bigskip

\section{Monomial bases of cyclotomic Odd nilHecke Algebra and of the bi-weight space $e(\wnu)\R(\beta)e(\wnu)$}
In this section, we will use the dimension formula obtained in Corollary \ref{smaedimcor} to construct explicitly certain bi-weight space $e(\wnu)\R(\beta)e(\wnu)$. In particular, this will yield
a monomial bases for the cyclotomic odd nilHecke algebra.

We first consider the cyclotomic odd nilHecke algebra $\ONH_n^\ell$.

\begin{dfn}{\rm (\cite[Section 5]{EKL})} Let $\ell,n\in\Z_{\geq 0}$. The odd nilHecke algebra $\ONH_n$ is the unital associative $K$-algebra with generators $\tau_1,\cdots,\tau_{n-1}, x_1,\cdots,x_{n-1}$ and the following defining relations:
$$\begin{aligned}
& \tau_i^2=0,\quad \tau_i\tau_{i+1}\tau_{i}=\tau_{i+1}\tau_{i}\tau_{i+1},\\
& x_i\tau_i+\tau_ix_{i+1}=1,\,\,\,\, \tau_i x_i+x_{i+1}\tau_i=1,\\
& x_ix_j+x_jx_i=0\,\, (i\neq j),\,\,\,\,\tau_i\tau_j+\tau_j\tau_i=0, (|i-j|>1),\\
& x_i\tau_j+\tau_jx_i=0\,\, (i\neq j,j+1).
\end{aligned}
$$
The cyclotomic odd nilHecke algebra $\ONH_n^\ell$ is defined to be the quotient of $\ONH_n$ by its two-sided ideal generated by $x_1^\ell$.
\end{dfn}
In particular, if we assume $0\in I_{\rm{odd}}$, then $\ONH_n=\RR(n\alpha_0)$, and $\ONH_n^\ell=\RR^{\ell\Lam_0}(n\alpha_0)$. We call $\ell>0$ the level of $\ONH_n^\ell$.

\begin{lem} Let $1\leq k<n$. Then for any $l\geq 1$, we have
\begin{equation}\label{keyformula}
\tau_k x_k^l=(-1)^l x_{k+1}^l \tau_k+\sum_{a+b=l-1}(-1)^bx_{k+1}^bx_k^a.
\end{equation}
\end{lem}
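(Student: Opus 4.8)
The claim is the commutation formula
\[
\tau_k x_k^l=(-1)^l x_{k+1}^l \tau_k+\sum_{a+b=l-1}(-1)^bx_{k+1}^bx_k^a
\]
in $\ONH_n$, and the plan is a straightforward induction on $l$. For the base case $l=1$, this is exactly the defining relation $\tau_k x_k+x_{k+1}\tau_k=1$, which rearranges to $\tau_k x_k=-x_{k+1}\tau_k+1$, matching the right-hand side since the sum $\sum_{a+b=0}(-1)^bx_{k+1}^bx_k^a$ is just the single term $1$.

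For the inductive step, I would write $\tau_k x_k^{l}=(\tau_k x_k^{l-1})x_k$, apply the induction hypothesis to the first factor, and then push the one remaining $\tau_k$ that appears past a single $x_k$ using the $l=1$ relation again. Concretely, $(-1)^{l-1}x_{k+1}^{l-1}\tau_k x_k = (-1)^{l-1}x_{k+1}^{l-1}(1-x_{k+1}\tau_k) = (-1)^{l-1}x_{k+1}^{l-1} + (-1)^l x_{k+1}^l\tau_k$, which produces the desired leading term $(-1)^l x_{k+1}^l\tau_k$ together with an extra monomial $(-1)^{l-1}x_{k+1}^{l-1}$. The remaining contribution from the sum is $\sum_{a+b=l-2}(-1)^b x_{k+1}^b x_k^{a+1}$, and one checks that adjoining to it the stray term $(-1)^{l-1}x_{k+1}^{l-1}$ (the $a=0$, $b=l-1$ term) yields precisely $\sum_{a+b=l-1}(-1)^b x_{k+1}^b x_k^a$. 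Note that multiplying $x_k^a$ on the right by $x_k$ is unambiguous and no odd signs intervene here, since all the manipulations stay within the $x_k,x_{k+1}$ variables and only the relation $\tau_kx_k+x_{k+1}\tau_k=1$ is used; the relation $x_kx_{k+1}+x_{k+1}x_k=0$ is not needed because the monomials on the right are already written in the order $x_{k+1}^b x_k^a$.

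There is essentially no obstacle here; the only point requiring a little care is the bookkeeping of signs and the reindexing of the summation in the inductive step, making sure the extra monomial $(-1)^{l-1}x_{k+1}^{l-1}$ is exactly the missing boundary term of the sum $\sum_{a+b=l-1}$. I would present the computation as a short displayed chain of equalities.
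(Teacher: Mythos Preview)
Your proof is correct and essentially the same as the paper's: both argue by induction on $l$ using the defining relation $\tau_k x_k = 1 - x_{k+1}\tau_k$. The only cosmetic difference is that the paper factors $\tau_k x_k^l = (\tau_k x_k)x_k^{l-1}$ and then applies the induction hypothesis to $\tau_k x_k^{l-1}$, whereas you factor as $(\tau_k x_k^{l-1})x_k$ and apply the hypothesis first; the computations are equivalent.
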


\begin{proof} We use induction on $l$. If $l=1$, then the lemma follows from the defining relation of $\ONH_n^\ell$. Suppose that the lemma holds for $l-1$. Now $$
\tau_k x_k^l=(\tau_k x_k)x_k^{l-1}=(1-x_{k+1}\tau_k)x_k^{l-1}=x_k^{l-1}-x_{k+1}(\tau_kx_k^{l-1}).
$$
Applying the induction hypothesis to $\tau_kx_k^{l-1}$, we prove the lemma.
\end{proof}

For each $w\in\Sym_n$, we define $\tau_w:=\tau_{i_1}\cdots\tau_{i_k}$, where $s_{i_1}\cdots s_{i_k}$ is a reduced expression of $w$. Then the braided relations of $\ONH_n^\ell$ ensures that $\tau_w$ is well-defined up to a sign.

\begin{lem}{\rm (\cite[(2.37)]{EKL})}\label{multiply} Let $u,v\in\Sym_n$. Then we have\[\tau_u\tau_v=
\begin{cases}
0,& \text{if $\ell(u)+\ell(v)\neq\ell(uv)$};\\
\pm\tau_{uv}, & \text{if $\ell(u)+\ell(v)=\ell(uv)$}.
\end{cases}\]
\end{lem}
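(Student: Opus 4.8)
The plan is to prove the assertion by induction on $\ell(v)$, using only those relations of $\ONH_n$ that involve the $\tau$'s, namely $\tau_i^2=0$, the braid relation $\tau_i\tau_{i+1}\tau_i=\tau_{i+1}\tau_i\tau_{i+1}$ and $\tau_i\tau_j=-\tau_j\tau_i$ for $|i-j|>1$, together with the fact recalled just before the lemma that $\tau_w$ depends on the chosen reduced expression of $w$ only up to a sign. When $\ell(v)=0$ we have $v=1$, $\tau_v=1$ and $\ell(uv)=\ell(u)=\ell(u)+\ell(v)$, so the claim holds. For $\ell(v)\ge 1$, pick a reduced expression $v=s_is_{j_1}\cdots s_{j_{\ell(v)-1}}$ and put $v'':=s_{j_1}\cdots s_{j_{\ell(v)-1}}$, so that $v=s_iv''$, $\ell(v'')=\ell(v)-1$ and $\tau_v=\pm\tau_i\tau_{v''}$. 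Since $\ell(us_i)\in\{\ell(u)-1,\ell(u)+1\}$, there are two cases.

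If $\ell(us_i)=\ell(u)+1$, then appending $s_i$ to a reduced word for $u$ gives a reduced word for $us_i$, whence $\tau_u\tau_i=\pm\tau_{us_i}$ and $\tau_u\tau_v=\pm\tau_{us_i}\tau_{v''}$. Here $(us_i)v''=uv$ and $\ell(us_i)+\ell(v'')=\ell(u)+\ell(v)$, so the pair $(us_i,v'')$ lies in the ``additive'' case exactly when $(u,v)$ does; as $\ell(v'')<\ell(v)$, the inductive hypothesis yields $\tau_{us_i}\tau_{v''}=\pm\tau_{uv}$ if $\ell(uv)=\ell(u)+\ell(v)$ and $\tau_{us_i}\tau_{v''}=0$ otherwise, which is precisely the claim for $(u,v)$. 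If instead $\ell(us_i)=\ell(u)-1$, then $u$ has a reduced expression ending in $s_i$, say $u=u's_i$ with $\ell(u')=\ell(u)-1$; thus $\tau_u=\pm\tau_{u'}\tau_i$ and $\tau_u\tau_v=\pm\tau_{u'}\tau_i^2\tau_{v''}=0$ by $\tau_i^2=0$. This agrees with the claim, because here $\ell(uv)=\ell(us_iv'')\le\ell(us_i)+\ell(v'')=\ell(u)+\ell(v)-2<\ell(u)+\ell(v)$, so we are in the branch where the product must vanish. This closes the induction.

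The proof is essentially bookkeeping on reduced words; the only point that requires a little care is the split of the ``non-reduced'' situation into the branch $\ell(us_i)=\ell(u)-1$, where a factor $\tau_i^2$ is manufactured and the product dies, and the branch $\ell(us_i)=\ell(u)+1$, where one reduces to a strictly shorter $v$. Beyond this, the argument only invokes the standard facts that $\ell(ws)=\ell(w)\pm 1$ and that $\ell(ws)<\ell(w)$ is equivalent to $w$ having a reduced expression ending in $s$, plus the up-to-sign well-definedness of $\tau_w$ already available. (Alternatively, one may phrase the same proof by appealing to Tits' solution of the word problem: braid moves in $\Sym_n$ correspond to the braid and far-commutation relations of $\ONH_n$ up to sign, so a reduced concatenation identifies $\tau_u\tau_v$ with $\pm\tau_{uv}$, while a non-reduced concatenation can be transformed by braid moves into a word containing some $\tau_i\tau_i=\tau_i^2=0$.)
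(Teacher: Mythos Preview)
Your proof is correct. The induction on $\ell(v)$ is the standard argument, and the two-case split according to whether $\ell(us_i)=\ell(u)\pm1$ is handled cleanly; in particular the verification in the second case that $\ell(uv)\le\ell(u)+\ell(v)-2$ correctly places you in the vanishing branch.

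As for comparison with the paper: there is nothing to compare. The paper does not supply its own proof of this lemma---it is simply quoted from \cite[(2.37)]{EKL} and used as a black box in the subsequent arguments (Lemmas~\ref{keylem2}, \ref{keylem3} and Theorem~\ref{mainthm2}). Your write-up therefore goes beyond what the paper provides. The alternative Tits-word-problem phrasing you mention at the end is also valid and is essentially how the well-definedness of $\tau_w$ up to sign is justified in the first place.
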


Recall that for each $1\leq i<n$, $s_i:=(i,i+1)$ is the corresponding transposition.
\begin{dfn}
Let $\underline{w}=(s_{i_1},\cdots,s_{i_m})$ be an arbitrary expression of $w$ (i.e., $w=s_{i_1}\cdots s_{i_m}$). Let $\underline{e}=e_1\cdots e_m$ be a string of length $m$ such that $e_i\in\{0,1\}$ for each $i$. We define  \begin{equation}\label{le}
\ell(\underline{e}):=\sum_{i=1}^ke_i,\quad \underline{w}^{\underline{e}}:=s_{i_1}^{e_1}\cdots s_{i_m}^{e_m}.
\end{equation}
\end{dfn}

Recall that the ring of skew polynomials $\skp_a$ is defined to be $$
\skp_a:=\Z\<t_1,\cdots,t_a\>/\<t_it_j+t_jt_i=0,\,\,\text{for}\,\,i\neq j\> ,
$$
where $t_1,\cdots,t_a$ are skew commuting variables. We use the natural embedding $\skp_a\hookrightarrow \skp_{a+1}$ to identify $\skp_a$ as a subring of $\skp_{a+1}$. There are natural left (and right) actions of $\skp_n$ on $\ONH_n$ and $\ONH_n^\ell$ which are defined by multiplication followed with substituting each $t_j$ with $x_j$.

\begin{lem} Let $\ell,n\in\Z_{\geq 0}$. Then $\ONH_n^\ell\neq 0$ if and only if $\ell\geq n$.
\end{lem}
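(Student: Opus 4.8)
The plan is to establish both directions of the equivalence $\ONH_n^\ell\neq 0\iff \ell\geq n$ using the graded dimension formula of Theorem~\ref{mainthmA} (equivalently, its ungraded shadow in Corollary~\ref{smaedimcor}, equation~(\ref{ungradeddim})), applied to the special case $I\ni 0\in I_{\rm odd}$, $\beta=n\alpha_0$, $\Lam=\ell\Lam_0$, so that $\ONH_n^\ell=\RR^{\ell\Lam_0}(n\alpha_0)$. In this situation $I^\beta$ is the singleton $\{\nu\}$ with $\nu=(0,\cdots,0)$, so $\ONH_n^\ell\neq 0$ if and only if $e(\nu)\neq 0$, and by Corollary~\ref{maincorA1} this happens precisely when
\[
\sum_{w\in\Sym(\nu,\nu)}\prod_{t=1}^{n}N^{\ell\Lam_0}(w,\nu,t)\neq 0.
\]
Since all entries of $\nu$ are equal, $\Sym(\nu,\nu)=\Sym_n$, so the criterion reduces to a statement purely about the numbers $N^{\ell\Lam_0}(w,\nu,t)$.

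First I would compute $N^{\ell\Lam_0}(w,\nu,t)$ explicitly. By Definition~\ref{keydfn1}, $N^{\ell\Lam_0}(w,\nu,t)=\langle h_0,\ell\Lam_0-\sum_{j\in J_w^{<t}}\alpha_0\rangle=\ell\langle h_0,\Lam_0\rangle-|J_w^{<t}|\langle h_0,\alpha_0\rangle=\ell-2|J_w^{<t}|$, using $\langle h_0,\Lam_0\rangle=1$ and $\langle h_0,\alpha_0\rangle=a_{00}=2$. Here $J_w^{<t}=\{1\le j<t: w(j)<w(t)\}$, whose cardinality ranges over $\{0,1,\dots,t-1\}$ as $w$ varies, and for the identity $w=1$ we get $|J_1^{<t}|=t-1$, giving $N^{\ell\Lam_0}(1,\nu,t)=\ell-2(t-1)$.

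Next comes the key combinatorial observation. For the \emph{backward} direction (if $\ell\geq n$, then $\ONH_n^\ell\neq 0$): I claim the term indexed by the longest element $w_0$ (or more conveniently, I will show the whole sum is nonzero by a positivity/sign argument) is nonzero — actually the cleanest route is to note that for $\ell\geq n$ we have $\ell\geq 2t-1$ is \emph{not} generally true, so instead I will argue that the ungraded dimension equals $\dim\ONH_n^\ell$ which is known (or can be shown directly) to be $n!\cdot\binom{\ell}{n}$-type, hence positive iff $\ell\ge n$; more self-containedly, I will exhibit one permutation $w$ for which $\prod_t N^{\ell\Lam_0}(w,\nu,t)\ne 0$ and for which no cancellation occurs, e.g. by grouping permutations according to the multiset $\{|J_w^{<t}|\}_t$. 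For the \emph{forward} direction (if $\ONH_n^\ell\ne 0$ then $\ell\ge n$): if $\ell<n$, i.e. $\ell\le n-1$, then for \emph{every} $w\in\Sym_n$ there is some $t$ with $|J_w^{<t}|=(\ell+1)/2$-ish — more precisely, consider $t=n$; then $|J_w^{<n}|=w^{-1}$-count of values below $w(n)$ among the first $n-1$ positions, which can be made to equal any value in $\{0,\dots,n-1\}$, but I need a $t$ that works for \emph{all} $w$ simultaneously. The right statement: for each $w$, the multiset $\{|J_w^{<t}|: 1\le t\le n\}$ always contains a $0$ (take $t$ with $w(t)=1$), a value $\ge 1$, \dots; in fact I will show $\{|J_w^{<t}|:1\le t\le n\}$ always contains some value $\ge\lceil n/2\rceil$? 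No — the sharp fact I need is: for every $w$, $\max_t|J_w^{<t}|\ge$ something forcing a factor $\ell-2|J_w^{<t}|\le 0$, but that only gives $\le 0$, not $=0$. So the real argument must be sign-based: I will show $\sum_{w\in\Sym_n}\prod_{t=1}^n(\ell-2|J_w^{<t}|)$ equals (up to normalization) the ungraded dimension, which I compute recursively via~(\ref{ind}) — the recursion from the proof of Theorem~\ref{mainthmA} gives $\dim\ONH_n^\ell=\bigl(\sum_{k=1}^n(\ell-2(k-1))\bigr)\cdot\dim\ONH_{n-1}^{\ell}$ after reindexing, but more carefully each step contributes $N^\Lam(1,\nu,t)$-type factors; unwinding, $\dim\ONH_n^\ell=\prod_{t=1}^n\bigl(\ell-2(t-1)+\text{(shift)}\bigr)$-style product. \textbf{The main obstacle} is precisely pinning down this recursion cleanly and reading off that the product vanishes exactly when $\ell\le n-1$; I expect to prove by induction on $n$, using~(\ref{ind}) with $\beta=n\alpha_0$, that $\dim\ONH_n^\ell = n!\binom{\ell}{n}$ (interpreting $\binom{\ell}{n}=0$ when $\ell<n$), which is manifestly nonzero iff $\ell\ge n$. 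The induction step uses that there is exactly one $k_n$ (namely any, since all $\nu_i$ agree — so $n$ choices of $k_n$) and that $\RR^{\ell\Lam_0}(\beta-\alpha_0)=\ONH_{n-1}^\ell$, together with the evaluation of $[(\ell\Lam_0-\sum_{i<k_n}\alpha_0)(h_0)]_0=\ell-2(k_n-1)$ at $q=1$, giving the factor $\sum_{k=1}^n(\ell-2(k-1))$... which is \emph{not} obviously $n\cdot(\ell-n+1)$; rather $\sum_{k=1}^n(\ell-2k+2)=n\ell-n(n+1)+2n=n(\ell-n+1)$, so indeed $\dim\ONH_n^\ell=n(\ell-n+1)\cdot\frac{\dim\ONH_{n-1}^\ell}{\,?\,}$ — wait, this recursion as stated gives $\dim\ONH_n^\ell=n(\ell-n+1)\dim\ONH_{n-1}^{\ell}$ only if the inner dimension is $\ONH_{n-1}^\ell$ unchanged, which \emph{overcounts}; I must instead track that the $(q,\pi)$-grading forces the inner algebra to effectively have level $\ell$ but the product telescopes to $\dim\ONH_n^\ell=\prod_{j=0}^{n-1}(\ell-n+1)\cdot\binom{n}{1}\cdots$, i.e. $=n!\binom{\ell}{n}$. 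So the plan concretely: (1) reduce to the criterion via Corollary~\ref{maincorA1}; (2) compute $N^{\ell\Lam_0}(w,\nu,t)=\ell-2|J_w^{<t}|$; (3) prove $\dim\ONH_n^\ell=n!\binom{\ell}{n}$ by induction on $n$ using the recursion~(\ref{ind}), carefully matching the specialization at $q=\pi=1$; (4) conclude $\ONH_n^\ell\ne0\iff n!\binom{\ell}{n}\ne0\iff\ell\ge n$. I anticipate the bookkeeping in step (3) — especially verifying that the recursion~(\ref{ind}) specializes to $\dim\ONH_n^\ell = \bigl(\sum_{k=1}^n(\ell-2k+2)\bigr)/(\text{correction})\cdot\dim\ONH_{n-1}^\ell$ in a way that telescopes correctly — to be the only genuinely delicate point; everything else is a direct substitution into results already proved above.
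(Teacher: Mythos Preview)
Your core approach is sound and does prove the lemma, but it differs substantially from the paper's argument and contains one arithmetic slip. The paper's proof is a two-line reduction: Corollary~\ref{smaedimcor} shows that the ungraded dimension formula for $\ONH_n^\ell=\RR^{\ell\Lam_0}(n\alpha_0)$ coincides with that of the ordinary (non-super) cyclotomic nilHecke algebra, whence the result follows by citing the known non-super statement from~\cite{HuL}. Your route is instead self-contained within this paper: specializing the recursion~(\ref{ind}) at $q=\pi=1$ with $\nu=\nu'=(0,\dots,0)$ yields
\[
\dim\ONH_n^\ell=\Bigl(\sum_{k=1}^n(\ell-2k+2)\Bigr)\dim\ONH_{n-1}^\ell=n(\ell-n+1)\dim\ONH_{n-1}^\ell,
\]
which unwinds to $\dim\ONH_n^\ell=n!\prod_{i=0}^{n-1}(\ell-i)$, and this product is nonzero iff $\ell\geq n$. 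This is precisely the formula~(\ref{odddim}) that the paper records a few lines later. Note your stated closed form ``$n!\binom{\ell}{n}$'' is off by a factor of $n!$, since $n!\binom{\ell}{n}=\prod_{i=0}^{n-1}(\ell-i)$; the correct value is $(n!)^2\binom{\ell}{n}$, though this does not affect the equivalence. All of the exploratory dead ends in your write-up (the sign-cancellation argument, the search for a single good $w$, the attempt to bound $\max_t|J_w^{<t}|$) should simply be deleted: none of them can work directly, because individual summands $\prod_t(\ell-2|J_w^{<t}|)$ may well be nonzero with mixed signs when $\ell<n$, and only the full sum vanishes. The recursion argument alone is clean and complete. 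What your approach buys is independence from the external references; what the paper's approach buys is brevity.
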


\begin{proof} By Corollary \ref{smaedimcor} and \cite[Corollary 3.22]{HS}, we see that the cyclotomic odd nilHecke algebra $\ONH_n^\ell$ has the same dimension as the usual cyclotomic nilHecke algebra $\HH_{\ell,n}^{(0)}$. So the lemma follows from \cite[(2.7)]{HuL}.
\end{proof}

Let $1\leq k<n$. We define $\underline{S_k}:=(s_k,s_{k-1},\cdots,s_1)$. For each $\underline{e}=(e_k,e_{k-1},\cdots,e_1)$ with $e_i\in\{0,1\},\forall\,1\leq i\leq k$, we define $$
\tau_{\underline{S_k}^{\underline{e}}}=\tau_{s_k^{e_k}}\tau_{s_{k-1}^{e_{k-1}}}\cdots \tau_{s_1^{e_1}},
$$
where we understand $\tau_{1}$ as $1$. The following Lemma is crucial in constructing the monomial basis of $\ONH_n^\ell$.

\begin{lem}\label{keylem2} Let $\ell,n\in\Z_{\geq 0}$ with $\ell\geq n$. For each $1\leq k<n$, there is a set of skew polynomials $\{g_{\underline{e},k+1}\in\skp_{k+1}|\underline{e}=(e_k,\cdots,e_1), e_i\in\{0,1\},\forall\,1\leq i\leq k\}$ such that \begin{enumerate}
\item each $g_{\underline{e},k+1}$ is a polynomial in $t_{k+1}$ of degree $\ell-k+\ell(\underline{e})$ with leading coefficient invertible and other coefficients in $\skp_k$.
\item $\sum_{\underline{e}}g_{\underline{e},k+1}\tau_{\underline{S_k}^{\underline{e}}}=0$ holds in $\ONH_n^\ell$.
\end{enumerate}
\end{lem}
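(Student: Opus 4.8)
The plan is to prove this by induction on $k$, starting from the cyclotomic relation $x_1^\ell = 0$ in $\ONH_n^\ell$ and repeatedly conjugating/multiplying by the $\tau_i$'s to ``push'' the relation to higher indices. For the base case $k=1$: the relation $x_1^\ell=0$ together with \eqref{keyformula} gives $\tau_1 x_1^\ell = (-1)^\ell x_2^\ell \tau_1 + \sum_{a+b=\ell-1}(-1)^b x_2^b x_1^a = 0$. Since $x_1^\ell = 0$, multiplying $x_1^\ell$ on the left by $\tau_1$ and using this identity expresses a relation of the form $g_{(0),2}\tau_1 + g_{(1),2} = 0$ (reading $\tau_{\underline{S_1}^{(0)}} = \tau_{s_1}$ and $\tau_{\underline{S_1}^{(1)}} = 1$, or the other convention), where $g_{(1),2} = (-1)^\ell x_2^\ell$ is a polynomial in $t_2$ of degree $\ell = \ell - 1 + \ell(\underline e)$ with $\ell(\underline e)=1$, leading coefficient $\pm 1$ invertible, and $g_{(0),2} = -\sum_{a+b=\ell-1}(-1)^b x_2^b x_1^a$ has top $t_2$-degree $\ell-1 = \ell - 1 + 0$ with leading coefficient $\pm x_1^{0}=\pm 1 \in \skp_1$ invertible. (One has to match the indexing conventions in the statement carefully, but the shape is exactly as claimed.)

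For the inductive step, suppose the relation $\sum_{\underline e}g_{\underline e,k+1}\tau_{\underline{S_k}^{\underline e}}=0$ holds, with each $g_{\underline e,k+1}$ a polynomial in $t_{k+1}$ of degree $\ell - k + \ell(\underline e)$ over $\skp_k$ with invertible leading coefficient. I would multiply this relation on the left by $\tau_{k+1}$. Using \eqref{keyformula} to move $\tau_{k+1}$ past the top power of $x_{k+1}$ in each $g_{\underline e,k+1}$ — i.e.\ $\tau_{k+1}x_{k+1}^m = (-1)^m x_{k+2}^m\tau_{k+1} + (\text{lower in }x_{k+2})$ — and using the skew-commutation of $x_{k+1}$ with $x_1,\dots,x_k$ plus the relations $x_i\tau_{k+1}+\tau_{k+1}x_i=0$ for $i \le k$, each term $\tau_{k+1}g_{\underline e,k+1}\tau_{\underline{S_k}^{\underline e}}$ becomes a sum of two pieces: one of the form $h_{\underline e}(t_{k+2})\,\tau_{k+1}\tau_{\underline{S_k}^{\underline e}}$ with $h_{\underline e}$ of $t_{k+2}$-degree one more than $\deg_{t_{k+1}}g_{\underline e,k+1}$ and invertible leading coefficient, and a ``remainder'' piece of strictly lower $t_{k+2}$-degree times $\tau_{k+1}\tau_{\underline{S_k}^{\underline e}}$ together with pieces involving $\tau_{\underline{S_k}^{\underline e}}$ alone (no $\tau_{k+1}$). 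Now $\tau_{k+1}\tau_{\underline{S_k}^{\underline e}}$, by definition of $\underline{S_k}$ and Lemma \ref{multiply}, is $\pm\tau_{\underline{S_{k+1}}^{\underline e'}}$ where $\underline e' = (1,e_k,\dots,e_1)$, provided the lengths add; when they don't, the product is $0$ by Lemma \ref{multiply}, which only kills terms. Meanwhile the terms $\tau_{\underline{S_k}^{\underline e}}$ with no $\tau_{k+1}$ are exactly the $\tau_{\underline{S_{k+1}}^{\underline e'}}$ with $e'$ having a $0$ in the new ($s_{k+1}$) slot. Collecting everything, one obtains an identity $\sum_{\underline e'} g_{\underline e',k+2}\,\tau_{\underline{S_{k+1}}^{\underline e'}} = 0$ over all strings $\underline e'$ of length $k+1$, and a bookkeeping check shows $\deg_{t_{k+2}} g_{\underline e',k+2} = \ell - (k+1) + \ell(\underline e')$ with invertible leading coefficient and other coefficients in $\skp_{k+1}$ — which is exactly what was to be shown.

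The main obstacle I expect is the careful sign- and degree-tracking through the non-commutative manipulations: because the $\tau_i$'s square to zero and skew-commute (rather than commute) with the $x_j$'s, and because $\tau_w$ is only defined up to sign, one must verify that the leading coefficient of each $g_{\underline e',k+2}$ in $t_{k+2}$ does not accidentally vanish or collide with a lower-degree contribution from a \emph{different} $\underline e$-term that maps to the same $\underline e'$. The key point making this work is that distinct strings $\underline e$ of length $k$ produce, under $\tau_{k+1}\tau_{\underline{S_k}^{\underline e}}$, distinct $\tau_{\underline{S_{k+1}}^{\underline e'}}$ (the map $\underline e\mapsto(1,\underline e)$ is injective, and separately $\underline e\mapsto(0,\underline e)$), so there is no interference between the ``top'' contribution of one term and the ``remainder'' of another except within a single $\underline e'$; within that single index the top-degree piece in $t_{k+2}$ comes from a unique source and keeps its invertible leading coefficient. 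Once this combinatorial separation is in place, the induction closes and the lemma follows.
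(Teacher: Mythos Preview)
Your approach is essentially the paper's: induct on $k$, starting from $\tau_1 x_1^\ell=0$ expanded via \eqref{keyformula}, and for the inductive step left-multiply the relation at level $k$ by $\tau_{k+1}$ and use \eqref{keyformula} to split each $\tau_{k+1}g_{\underline e,k+1}$ into a part $h'_{k+2}\tau_{k+1}$ (which becomes $g_{1\underline e,k+2}$) and a part $h_{k+2}$ with no $\tau_{k+1}$ (which becomes $g_{0\underline e,k+2}$). Two small corrections: the $t_{k+2}$-degree of the piece multiplying $\tau_{k+1}$ is \emph{equal to} $\deg_{t_{k+1}}g_{\underline e,k+1}$, not one more (and the remaining piece has degree one less), though your final formula $\ell-(k+1)+\ell(\underline e')$ is right; and Lemma~\ref{multiply} is not needed in the inductive step, since $\tau_{\underline{S_{k+1}}^{(1,\underline e)}}=\tau_{k+1}\tau_{\underline{S_k}^{\underline e}}$ holds by definition of the notation.
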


\begin{proof} We use induction on $k$. If $k=1$, then \eqref{keyformula} implies $$0=\tau_1 x_1^\ell=(-1)^\ell x_2^\ell \tau_1+\sum_{a+b=\ell-1}(-1)^bx_2^bx_1^a.
$$ Hence the lemma follows in this case because we can take $$
g_{0,2}=\sum_{a+b=\ell-1}(-1)^bx_2^bx_1^a,\quad \,g_{1,2}=(-1)^\ell x_2^\ell. $$

Suppose the lemma holds for an integer $1<k<n-1$. We want to prove it holds for the integer $k+1$. By induction hypothesis,  there is a set of skew polynomials $\{g_{\underline{e},k+1}\in\skp_{k+1}|\underline{e}=(e_1,\cdots,e_k), e_i\in\{0,1\},\forall\,1\leq i\leq k\}$ such that $$ 0=\sum_{\underline{e}}g_{\underline{e},k+1}\tau_{\underline{S_k}^{\underline{e}}},
$$
where  $g_{\underline{e},k+1}$ is a polynomial in $t_{k+1}$ of degree $\ell-k+\ell(\underline{e})$ with leading coefficient invertible and other coefficients in $\text{SkPol}_k$. In particular, \begin{equation}\label{induction} 0=\tau_{k+1}\sum_{\underline{e}}g_{\underline{e},k+1}\tau_{\underline{S_k}^{\underline{e}}}.
\end{equation}
Applying \eqref{keyformula} we can write that $$
\tau_{k+1}g_{\underline{e},k+1}=h_{k+2}+h'_{k+2}\tau_{k+1},
$$
where $h_{k+2}$ (resp., $h'_{k+2}$) is a polynomial in $t_{k+2}$ of degree $\ell-k-1+\ell(0\underline{e})$ (resp., $\ell-k+\ell(\underline{e})=\ell-k-1+\ell(1\underline{e})$) with leading coefficient invertible and other coefficients in $\skp_{k+1}$.
Now we define $g_{0\underline{e},k+2}:=h_{k+2},\, g_{1\underline{e},k+2}:=h'_{k+2}$. Then the lemma follows from \eqref{induction} and Lemma \ref{multiply}.
\end{proof}

\begin{thm}\label{mainthm2} Let $\ell,n\in\Z_{\geq 0}$ with $\ell\geq n$. Then the following set \begin{equation}\label{bases1}
\Sigma:=\{x_{1}^{k_1}\cdots x_{n}^{k_n}\tau_w\,|\,w\in\Sym_n,\,\,0\leq k_i\leq\ell-i,\forall\,1\leq i\leq n\}
\end{equation} forms a $\bk$-basis of $\ONH_n^\ell$.
\end{thm}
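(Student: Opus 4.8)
\medskip
\noindent\textbf{Proof proposal for Theorem~\ref{mainthm2}.}
The plan is to show that $\Sigma$ spans $\ONH_n^\ell$ over $\bk$ and then to match cardinalities. For the dimension, apply Corollary~\ref{smaedimcor} to $\ONH_n^\ell=\RR^{\ell\Lam_0}(n\alpha_0)$ (with $0\in I_{\rm{odd}}$) and $\nu=\nu'=(0,\dots,0)$: here $\Sym(\nu,\nu)=\Sym_n$ and $N^{\ell\Lam_0}(w,\nu,t)=\langle h_0,\ell\Lam_0-\sum_{j\in J_w^{<t}}\alpha_0\rangle=\ell-2|J_w^{<t}|$, so
\[
\dim_\bk\ONH_n^\ell=\sum_{w\in\Sym_n}\prod_{t=1}^n\bigl(\ell-2|J_w^{<t}|\bigr).
\]
Via the standard factorial-base bijection $\Sym_n\xrightarrow{\,\sim\,}\prod_{t=1}^n\{0,1,\dots,t-1\}$ sending $w$ to $(|J_w^{<t}|)_{t=1}^n$, this sum factors as $\prod_{t=1}^n\bigl(\sum_{c=0}^{t-1}(\ell-2c)\bigr)=\prod_{t=1}^n t(\ell-t+1)=n!\prod_{i=1}^n(\ell-i+1)=|\Sigma|$ (this value of $\dim_\bk\ONH_n^\ell$ is also the one implicit in the proof of the preceding lemma). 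Hence it suffices to prove that $\Sigma$ spans $\ONH_n^\ell$.

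By Proposition~\ref{stdBasis} applied to $\ONH_n=\RR(n\alpha_0)$, the images of the monomials $x_1^{a_1}\cdots x_n^{a_n}\tau_w$ ($a_i\in\Z_{\geq 0}$, $w\in\Sym_n$) span $\ONH_n^\ell$, and any one of them with $a_1\ge\ell$ is $0$ by the cyclotomic relation $x_1^\ell=0$. I will show that each lies in the $\bk$-span of $\Sigma$, arguing by induction along the lexicographic order on the key $\bigl(\binom{n}{2}-\ell(w),\,a_n,\,a_{n-1},\dots,a_1\bigr)$. If all $a_i\le\ell-i$ the monomial is in $\Sigma$. Otherwise let $m$ be the largest index with $a_m\ge\ell-m+1$; since $m=1$ makes the monomial $0$, assume $m\ge2$ and apply the relation of Lemma~\ref{keylem2} for $k=m-1$. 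Its $\underline{e}=\underline{0}$ summand contributes $g_{\underline{0},m}=u\,x_m^{\ell-m+1}+L$, where $u\in\bk^\times$ and $L$ is a polynomial in $x_m$ of degree $<\ell-m+1$ with coefficients in $\skp_{m-1}$; hence
\[
x_m^{\ell-m+1}=-u^{-1}L-u^{-1}\sum_{\underline{e}\neq\underline{0}}g_{\underline{e},m}\,\tau_{\underline{S_{m-1}}^{\underline{e}}} .
\]
Left-multiplying by $x_1^{a_1}\cdots x_{m-1}^{a_{m-1}}x_m^{a_m-\ell+m-1}$ and right-multiplying by $x_{m+1}^{a_{m+1}}\cdots x_n^{a_n}\tau_w$ rewrites $x_1^{a_1}\cdots x_n^{a_n}\tau_w$ as a $\bk$-linear combination of: (i) monomials $x_1^{b_1}\cdots x_n^{b_n}\tau_w$ coming from $L$, with $b_i=a_i$ for $i>m$ and $b_m<a_m$; and (ii) terms from the summands $\underline{e}\neq\underline{0}$, in which $\tau_{\underline{S_{m-1}}^{\underline{e}}}$ skew-commutes past $x_{m+1},\dots,x_n$ and, by Lemma~\ref{multiply}, $\tau_{\underline{S_{m-1}}^{\underline{e}}}\tau_w$ equals either $0$ or $\pm\tau_{w'}$ with $\ell(w')=\ell(w)+\ell(\underline{e})>\ell(w)$ (the word $s_{m-1}^{e_{m-1}}\cdots s_1^{e_1}$ being reduced, as a strictly decreasing subword). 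In every case the resulting monomials have strictly smaller key, so the induction hypothesis applies; when $w=w_0$ the type-(ii) terms all vanish and only the $L$-terms remain, which handles the base of the length induction.

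The routine matters I am suppressing are the signs produced by the skew-commutation relations of $\ONH_n$ when reordering the $x_i$ and moving the $\tau$'s past the $x_i$; the harmless fact that the $L$-term can raise $a_1,\dots,a_{m-1}$ but only boundedly and without affecting $a_{m+1},\dots,a_n$; and the verification that the key above, ordered lexicographically on $\{0,\dots,\binom{n}{2}\}\times\Z_{\geq 0}^n$, is well-founded. The one genuinely delicate point is the choice of this key: Lemma~\ref{keylem2} trades a low power of $x_m$ carrying a trivial $\tau$-part for \emph{higher} powers of $x_m$, so no order based on $x$-degrees alone can work; but those higher powers always come attached to a strictly \emph{longer} $\tau_{w'}$, and Lemma~\ref{multiply} guarantees simultaneously that the length strictly increases and that the recursion terminates, being forced to stop once $\ell(w)=\binom{n}{2}$, where the offending terms vanish. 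Making this interplay precise through the order above is where the real work lies.
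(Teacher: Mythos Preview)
Your proof is correct and takes essentially the same approach as the paper: both show spanning via Lemma~\ref{keylem2} (to rewrite $x_m^{\ell-m+1}$ as a lower-degree piece plus terms carrying nontrivial $\tau$-factors) together with Lemma~\ref{multiply} (to force those $\tau$-factors into strictly longer $\tau_{w'}$ upon right-multiplication by $\tau_w$), and both pair this with the dimension count coming from Corollary~\ref{smaedimcor}. The only differences are organisational: the paper frames the induction as a nested triple (upward on $m$, downward on $\ell(w)$, upward on $t_m$) and cites \cite[Corollaries~3.8, 3.22]{HS} for the value $n!\prod_{i=0}^{n-1}(\ell-i)$, whereas you package the induction into a single well-founded lexicographic key and compute the dimension directly from the Lehmer-code bijection.
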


\begin{proof} In fact, by  Corollary \ref{smaedimcor} and \cite[Corollaries 3.8, 3.22]{HS}, we can deduce that \begin{equation}\label{odddim}
\dim\ONH_n^\ell=n!\prod_{i=0}^{n-1}(\ell-i).
\end{equation} Hence we only need to show that the set $\Sigma$ is a set of $\bk$-linear generators of $\ONH_n^\ell$. For each integer $1\leq m\leq n$, we set $$\widetilde{\ONH}_{\leq m}^\ell
:=\text{$\bk$-span}\{x_{1}^{k_1}\cdots x_{m}^{k_m}\tau_w\,|\,w\in\Sym_n\,\,0\leq k_i\leq\ell-i,\forall\,1\leq i\leq m\},
$$
We claim that \begin{equation}\label{generating}
x_{1}^{t_1}\cdots x_{m}^{t_m}\tau_w\in \widetilde{\ONH}_{\leq m}^\ell,\quad\forall\,w\in\Sym_n,\, t_1,\cdots,t_m\in\Z_{\geq 0} .
\end{equation}
We prove \eqref{generating} by induction upward on $m$, downward on $\ell(w)$ and then upward on $t_m$. If $m=1,\,\ell(w)=n(n-1)/2,\,t_m=0$, then clearly \eqref{generating} holds. Now we consider the term $x_{1}^{t_1}\cdots x_{m}^{t_m}\tau_w$ in general case. If $t_m\leq\ell-m$, then by induction hypothesis, $$
x_{1}^{t_1}\cdots x_{m-1}^{t_{m-1}}\tau_w\in \widetilde{\ONH}_{\leq m-1}^\ell,
$$ and hence $$
x_{1}^{t_1}\cdots x_{m}^{t_{m}}\tau_w\in \widetilde{\ONH}_{\leq m}^\ell.
$$

If $t_m>\ell-m$, then applying Lemma \ref{keylem2} we can deduce that \begin{equation}\label{transformation}
x_m^{\ell-(m-1)}=h_m\cdot 1+\sum_{w'\neq 1}f_{m,\,w'}\tau_{w'},
\end{equation}
where $h_m\in\skp_m$ is a polynomial of $t_m$ of degree $\leq\ell-m$ with coefficients in $\text{SkPol}_{m-1}$, $f_{m,\,w'}\in\text{SkPol}_m$. Now, left-multiplying \eqref{transformation} by $x_{1}^{t_1}\cdots x_{m}^{t_{m}-(\ell-m+1)}$ and post-multiplying $\tau_w$ and using the Lemma \ref{multiply}, we can get that $$ x_{1}^{t_1}\cdots x_{m}^{t_{m}}\tau_w=h'_m\tau_w+\sum_{w'\neq 1}f'_{m,\,w'}\tau_{w'},
$$  where $h'_m$ is a polynomial of $x_m$ of degree $\leq t_m-1$ with coefficients in $\text{SkPol}_{m-1}$, $f_{m,\,w'}\in\text{SkPol}_m$, and $f_{m,\,w'}\neq 0$ only if $\ell(w')>\ell(w)$ (by Lemma \ref{multiply}). Applying induction hypothesis on the right-hand side of the equation above we complete the proof of \eqref{generating}. Finally, \eqref{generating} implies that $\widetilde{\ONH}_{\leq n}^\ell=\ONH_n^\ell$. Hence the statement comes from \eqref{odddim}.
\end{proof}

In the rest of this section, we shall generalise Theorem \ref{mainthm2} to a more general bi-weight subspace of an arbitrary cyclotomic quiver Hecke superalgebra as we did for the usual cyclotomic quiver Hecke algebras in \cite[Theorem 4.8]{HS}. To do this, we need some notations. We fix $p\in\N$, $\fb:=(b_1,\cdots,b_p)\in\N^p$ and $\nu^1,\cdots,\nu^p\in I$ such that $\nu^i\neq\nu^j$ for any $1\leq i\neq j\leq p$ and $\sum_{i=1}^{p}b_i=n$.
Recall that in (\ref{wnu0}) we have defined $$
\wnu=(\wnu_1,\cdots,\wnu_n):=\bigl(\underbrace{\nu^1,\cdots,\nu^1}_{\text{$b_1$ copies}},\cdots,\underbrace{\nu^p,\cdots,\nu^p}_{\text{$b_p$ copies}}\bigr)\in I^\beta ,
$$
where $\beta=\sum_{i=1}^{p}b_i\alpha_{\nu^i}\in Q_n^+$. Let $\Lam\in P^+$ be an arbitrary integral dominant weight. Our purpose is to give a monomial basis for the bi-weight subspace $e(\wnu)\RR^\Lam(\beta)e(\wnu)$, which will reduce to the monomial basis we constructed in Theorem \ref{mainthm2} in the special case when $p=1$, $\Lam=\ell\Lam_{\nu^1}$ and $\nu^1\in I_{\rm{odd}}$.

We set $b_0:=0, c_t:=\sum_{i=0}^{t}b_i$ for any $0\leq t\leq p$. We set \begin{equation}\label{sb}
\Sym_{\fb}:=\Sym_{\{1,\cdots,c_1\}}\times\Sym_{\{c_1+1,\cdots,c_2\}}\times\cdots\times\Sym_{\{c_{p-1}+1,\cdots,n\}}.
\end{equation}
For each $1\leq t\leq p$, we define $$
N^\Lam_t(\wnu):=N^\Lam(1,\wnu,c_{t-1}+1).
$$ When $c_{i-1}<k\leq c_{i}$ for some $1\leq i\leq p$, we define \begin{equation}\label{nwvk}
N^\Lam(\wnu,k)=N^\Lam_{i}(\wnu)-(k-c_{i-1}-1).
\end{equation}

The following corollary is an analogue of \cite[Theorem 4.4]{HS} for cyclotomic quiver Hecke superalgebras.

\begin{cor}\label{symdim}  Let $\Lam\in P^+$. Let $\wnu$ be given as in (\ref{wnu0}). Let $\beta\in Q_n^+$ such that $\wnu\in I^\beta$. Then we have
$$ \dim e(\wnu)\RR^{\Lam}(\beta)e(\wnu)=\prod_{i=1}^{p}\Bigl(b_{i}!\prod_{j=0}^{b_i-1}(N^\Lam_{i}(\wnu)-j)\Bigr)=
\Bigl(\prod_{i=1}^{p}b_{i}!\Bigr)\Bigl(\prod_{k=1}^{n}N^\Lam(\wnu,k)\Bigr). $$
In particular, $e(\wnu)\neq 0$ in $\RR^\Lam(\beta)$ if and only if $N^\Lam_{i}(\wnu)\geq b_{i}$ for any $1\leq i\leq p$.
\end{cor}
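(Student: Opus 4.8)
The plan is to derive Corollary~\ref{symdim} purely as a specialization of the dimension formula in Corollary~\ref{smaedimcor}, reorganizing the sum over $\Sym(\wnu,\wnu)$ by exploiting the very rigid combinatorics of the tuple $\wnu$. First I would observe that since $\nu^1,\dots,\nu^p$ are pairwise distinct, a permutation $w\in\Sym_n$ satisfies $w\wnu=\wnu$ if and only if $w$ preserves each of the blocks $\{c_{i-1}+1,\dots,c_i\}$; that is, $\Sym(\wnu,\wnu)=\Sym_{\fb}$ exactly. This reduces the ungraded formula \eqref{ungradeddim} to
$$
\dim e(\wnu)\RR^\Lam(\beta)e(\wnu)=\sum_{w\in\Sym_{\fb}}\prod_{t=1}^{n}N^\Lam(w,\wnu,t).
$$

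Next I would analyze a single factor $N^\Lam(w,\wnu,t)=\<h_{\wnu_t},\Lam-\sum_{j\in J_w^{<t}}\alpha_{\wnu_j}\>$ for $w\in\Sym_{\fb}$. Fix the block index $i$ with $c_{i-1}<t\le c_i$. Since $w$ fixes all blocks setwise, every index $j<t$ lying in a block $\{c_{i'-1}+1,\dots,c_{i'}\}$ with $i'<i$ automatically satisfies $w(j)<w(t)$ (as $w(j)$ stays in block $i'$ and $w(t)$ stays in block $i$), so such $j$ always belong to $J_w^{<t}$; and no index $j<t$ from a later block can occur. Within block $i$ itself, the set $\{j : c_{i-1}<j<t,\ w(j)<w(t)\}$ depends only on the restriction $w_i:=w|_{\{c_{i-1}+1,\dots,c_i\}}$. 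Writing $\wnu_t=\nu^i$ and collecting the contributions of the earlier full blocks into $N^\Lam_i(\wnu)=N^\Lam(1,\wnu,c_{i-1}+1)=\<h_{\nu^i},\Lam-\sum_{i'<i}b_{i'}\alpha_{\nu^{i'}}\>$, one gets $N^\Lam(w,\wnu,t)=N^\Lam_i(\wnu)-\bigl|\{j: c_{i-1}<j<t,\ w(j)<w(t)\}\bigr|$. Hence $\prod_{t=1}^n N^\Lam(w,\wnu,t)=\prod_{i=1}^p\prod_{t=c_{i-1}+1}^{c_i}\bigl(N^\Lam_i(\wnu)-a_i(w,t)\bigr)$, where $a_i(w,t)$ is the number of inversions "into position $t$" for $w_i$.

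Then I would perform the summation. Because $\Sym_{\fb}=\prod_{i=1}^p\Sym_{\{c_{i-1}+1,\dots,c_i\}}$, the sum factors as a product over $i$ of $\sum_{w_i\in\Sym_{b_i}}\prod_{t}\bigl(N^\Lam_i(\wnu)-a_i(w_i,t)\bigr)$. The key combinatorial identity — essentially the statement that the inversion-type statistic gives a product formula, exactly the computation behind \cite[Corollary~3.8, 3.22]{HS} used already in the proof of Theorem~\ref{mainthm2} — is that for any integer $M$,
$$
\sum_{u\in\Sym_m}\prod_{t=1}^{m}\bigl(M-a(u,t)\bigr)=m!\prod_{j=0}^{m-1}(M-j),
$$
which one proves by induction on $m$ (peeling off the position where $1$ is sent, or equivalently noting the multiset $\{a(u,t)\}_t$ ranges so that the generating sum telescopes; alternatively one may invoke the known dimension of the cyclotomic nilHecke algebra as in the proof of the lemma preceding Theorem~\ref{mainthm2}). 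Applying this with $m=b_i$ and $M=N^\Lam_i(\wnu)$ for each $i$ yields
$$
\dim e(\wnu)\RR^\Lam(\beta)e(\wnu)=\prod_{i=1}^p\Bigl(b_i!\prod_{j=0}^{b_i-1}\bigl(N^\Lam_i(\wnu)-j\bigr)\Bigr),
$$
and the second equality in the corollary is just the definition \eqref{nwvk} of $N^\Lam(\wnu,k)$, which lists the numbers $N^\Lam_i(\wnu)-j$ for $0\le j\le b_i-1$ as $k$ runs through block $i$. Finally, the "in particular" clause follows from Corollary~\ref{maincorA1}: $e(\wnu)\ne 0$ iff this dimension is nonzero, and since each $N^\Lam_i(\wnu)$ is a nonnegative integer (as $\Lam\in P^+$ forces $\<h_{\nu^i},\Lam\>\ge 0$, though intermediate terms could in principle be negative — one checks the product $\prod_{j=0}^{b_i-1}(N^\Lam_i(\wnu)-j)$ is nonzero precisely when $N^\Lam_i(\wnu)\ge b_i$), the whole product is nonzero iff $N^\Lam_i(\wnu)\ge b_i$ for all $i$.

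The main obstacle I anticipate is not the block-decomposition bookkeeping (which is routine once one believes $\Sym(\wnu,\wnu)=\Sym_{\fb}$) but pinning down the combinatorial identity cleanly and making sure the statistic $a_i(w,t)$ is identified correctly with the quantity appearing in $N^\Lam(w,\wnu,t)$ — in particular verifying that the contribution of all strictly earlier blocks is constant and equals $N^\Lam_i(\wnu)$ independently of $w$, and that within-block indices $j>t$ never enter $J_w^{<t}$. Once these are nailed down, the argument is a short computation; for the identity itself the safest route in the write-up is to cite the corresponding computation in \cite{HS} (Corollaries~3.8 and~3.22), exactly as is done in the proof of Theorem~\ref{mainthm2} and the lemma preceding it, rather than reproving it.
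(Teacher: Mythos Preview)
Your approach is exactly the paper's: the authors simply write ``This follows directly from \eqref{ungradeddim}'', and you are supplying the details of that derivation. The block decomposition $\Sym(\wnu,\wnu)=\Sym_{\fb}$, the factorisation of the sum, and the appeal to the combinatorial identity from \cite[Corollaries~3.8, 3.22]{HS} are all the right moves.

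There is, however, a concrete slip in your within-block computation. You write
\[
N^\Lam(w,\wnu,t)=N^\Lam_i(\wnu)-\bigl|\{j: c_{i-1}<j<t,\ w(j)<w(t)\}\bigr|,
\]
but each such $j$ has $\wnu_j=\nu^i=\wnu_t$, and $\langle h_{\nu^i},\alpha_{\nu^i}\rangle=a_{ii}=2$, not $1$. The correct expression is
\[
N^\Lam(w,\wnu,t)=N^\Lam_i(\wnu)-2\,a_i(w,t).
\]
Consequently the identity you need is
\[
\sum_{u\in\Sym_m}\prod_{t=1}^{m}\bigl(M-2\,a(u,t)\bigr)=m!\prod_{j=0}^{m-1}(M-j),
\]
not the version with $M-a(u,t)$ (which is already false for $m=2$: it gives $2M^2-M$ rather than $2M(M-1)$). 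The corrected identity is precisely what \cite[Corollaries~3.8, 3.22]{HS} establish, so once you insert the missing factor of $2$ your argument goes through unchanged and matches the paper's intended one-line proof.
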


\begin{proof} This follows directly from \eqref{ungradeddim}.
\end{proof}

Recall $$
\mathcal{P}_\nu:=\bk\<{\rm x}_1,\cdots,{\rm x}_n\>/\<{\rm x}_a{\rm x}_b-(-1)^{\rmp(\nu_a)\rmp(\nu_b)}{\rm x}_b{\rm x}_a|1\leq a<b\leq n\> .
$$
For any $1\leq m\leq n$, we set $\nu_{\leq m}:=(\nu_1,\cdots,\nu_{m})$. There is a natural embedding $\mathcal{P}_{\nu_{\leq m}} \hookrightarrow \mathcal{P}_\nu$ which identifies $\mathcal{P}_{\nu_{\leq m}}$ as a subring of $\mathcal{P}_\nu$.

\begin{lem}\label{annihilator poly} Let $\Lam\in P^+$. Let $\wnu$ be given as in (\ref{wnu0}). Let $\beta\in Q_n^+$ such that $\wnu\in I^\beta$. Then for each $1\leq i\leq p$, there exists an element $q_i\in \mathcal{P}_{\wnu_{\leq  c_{i-1}+1}}$ which is a polynomial in ${\rm x}_{c_{i-1}+1}$ of degree $N^\Lam_{i}(\wnu)$ with leading coefficient invertible and other coefficients in $\mathcal{P}_{\wnu_{\leq c_{i-1}}}$, and such that
$q_ie(\wnu)=0$ in $e(\wnu)\RR^\Lam({\beta})e(\wnu)$.
\end{lem}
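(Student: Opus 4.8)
The plan is to imitate the inductive construction of the ``straightening'' polynomials from the odd nilHecke case (Lemma \ref{keylem2}), but working inside the cyclotomic quiver Hecke superalgebra $\RR^\Lam(\beta)$ and starting from the cyclotomic relation $a^\Lam(x_1)e(\beta)=0$ rather than from $x_1^\ell=0$. Fix $i$ with $1\le i\le p$ and write $c:=c_{i-1}$, $\nu^i$ for the common value $\wnu_{c+1}=\cdots=\wnu_{c_i}$, and $m:=N^\Lam_i(\wnu)=N^\Lam(1,\wnu,c+1)=\langle h_{\nu^i},\Lam-\sum_{j\le c}\alpha_{\wnu_j}\rangle$. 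The first step is to produce, for $k=1$, a relation of the shape $q\,e(\wnu)=0$ where $q\in\mathcal P_{\wnu_{\le 1}}$ is a monic (up to a unit) polynomial in ${\rm x}_1$ of degree $N^\Lam(1,\wnu,1)=\langle h_{\wnu_1},\Lam\rangle$: this is exactly the defining cyclotomic relation $a^\Lam(x_1)e(\wnu)=x_1^{\langle h_{\wnu_1},\Lam\rangle}e(\wnu)=0$, so the base case is immediate with $q=x_1^{\langle h_{\wnu_1},\Lam\rangle}$.

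For the inductive step I want to pass from a relation annihilating $e(\wnu)$ involving ${\rm x}_k$ to one involving ${\rm x}_{k+1}$, dropping the degree by one each time we cross a position inside the block $\{c+1,\dots,c_i\}$ — precisely the combinatorics encoded in (\ref{nwvk}). Concretely, suppose we have $q_k\,e(\wnu)=0$ with $q_k$ a polynomial in ${\rm x}_k$ of the correct degree and invertible leading coefficient, coefficients in $\mathcal P_{\wnu_{\le k-1}}$. Multiply by $\tau_k$ on the left. Since $\wnu_k=\wnu_{k+1}=\nu^i$ when $c<k<c_i$, the commutation relations in Definition \ref{qhs} — in particular $\tau_a x_{a+1}e(\nu)-(-1)^{\rmp(\nu_a)\rmp(\nu_{a+1})}x_a\tau_a e(\nu)=\delta_{\nu_a,\nu_{a+1}}e(\nu)$ and its companion, together with $\tau_a^2e(\nu)=Q_{\nu_a,\nu_{a+1}}(x_a,x_{a+1})e(\nu)=0$ (as $Q_{ii}=0$) — let me rewrite $\tau_k q_k$ as $h + h'\tau_k$ where $h$ is a polynomial in ${\rm x}_{k+1}$ of degree one less than $q_k$, with invertible leading coefficient and coefficients in $\mathcal P_{\wnu_{\le k}}$. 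The key point, analogous to (\ref{keyformula}), is that ``moving $\tau_k$ past a power of $x_k$'' converts $x_k^l$ into a sum of $(-1)^l x_{k+1}^l\tau_k$ plus lower terms in $x_{k+1}$; the $\tau_k^2=0$ relation then kills the $\tau_k$-part after one more application, and one reads off a relation $h\,e(s_k\wnu)=h\,e(\wnu)=0$ (note $s_k\wnu=\wnu$ here since $\wnu_k=\wnu_{k+1}$). This gives $q_{k+1}:=h$. Iterating $k$ from $c$ up to $c_i-1$, starting from the degree-$N^\Lam(1,\wnu,c+1)$ relation at position $c+1$ (which is obtained from the base case by sliding through positions $1,\dots,c$ exactly as in the non-super proof of \cite[Theorem 4.4]{HS}, using that $\wnu_1=\cdots=\wnu_{c_1}$, etc.), produces the required $q_i$ at position $c_{i-1}+1$ of degree $N^\Lam_i(\wnu)$.

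The main obstacle I anticipate is bookkeeping the signs coming from the super (skew-)commutation relations: every time $\tau_k$ is pushed past $x_k$ or $x_{k+1}$, or two odd generators are transposed, a sign $(-1)^{\rmp(\cdot)\rmp(\cdot)}$ appears, and one must check these do not spoil the claim that the leading coefficient of $q_i$ in ${\rm x}_{c_{i-1}+1}$ is a \emph{unit} (rather than merely nonzero) and that the lower coefficients genuinely land in $\mathcal P_{\wnu_{\le c_{i-1}}}$. In the odd nilHecke model these signs are exactly $(-1)^l$ in (\ref{keyformula}), harmless since $-1$ is invertible; in the general superalgebra they are products of such signs and the same reasoning applies, but the verification that no ${\rm x}_{c_{i-1}+1}$-degree is accidentally lost (which would happen only if a leading term cancelled) requires tracking that at each step the top term is $\pm(\text{unit})\,{\rm x}_{k+1}^{\deg}$ with the unit inherited from the previous step times a sign. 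A secondary technical point is that $Q_{\nu^i,\nu^i}=0$ and, more generally, condition (3) on the $Q_{ij}$ (namely $t_{i,j;(r,s)}=0$ when $i=j$) is what makes $\tau_k^2e(\wnu)=0$ and hence makes the recursion terminate cleanly — this must be invoked explicitly. Once $q_i$ is constructed the statement follows, and this lemma will then feed into the proof of Theorem \ref{mainthmB} by providing, together with Corollary \ref{symdim}, the dimension count needed to show the spanning set (\ref{base1B}) is a basis.
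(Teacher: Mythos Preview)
Your proposal conflates this lemma with the next one and leaves the actual content of Lemma~\ref{annihilator poly} unproved. What the lemma asks for is a single annihilating polynomial at the \emph{first} position $c_{i-1}+1$ of block $i$; the within-block recursion you describe (pass from $q_k$ to $q_{k+1}$ dropping degree by one when $\wnu_k=\wnu_{k+1}$) is precisely the content of Lemma~\ref{keylem3}, and even there one does not extract a relation of the form $h\,e(\wnu)=0$: from $\tau_k q_k e(\wnu)=0$ you only get $(h+h'\tau_k)e(\wnu)=0$, i.e.\ $h\,e(\wnu)=-h'\tau_k e(\wnu)$, and multiplying once more by $\tau_k$ gives $h\tau_k e(\wnu)=0$, not $h\,e(\wnu)=0$. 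So the step ``one reads off a relation $h\,e(\wnu)=0$'' is unjustified. What you actually need for \emph{this} lemma is a mechanism to cross the first $c_{i-1}$ positions, whose entries $\nu^1,\dots,\nu^{i-1}$ are all \emph{different} from $\nu^i$; your degree-drop-by-one step does not apply there, and you do not explain what does (the parenthetical appeal to ``sliding through positions $1,\dots,c$'' is exactly the missing argument).

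The paper's proof is not inductive at all but a direct conjugation. Let $\widehat{\nu}$ be $\wnu$ with its $(c_{i-1}+1)$-th entry $\nu^i$ moved to the front, and sandwich the cyclotomic relation $a^\Lam_{\nu^i}(x_1)e(\widehat{\nu})=0$ between $\tau_{c_{i-1}}\cdots\tau_1$ on the left and $\tau_1\cdots\tau_{c_{i-1}}$ on the right. Since every intermediate swap involves entries distinct from $\nu^i$, one may commute $a^\Lam_{\nu^i}(x_1)$ past the left $\tau$-string to $\pm a^\Lam_{\nu^i}(x_{c_{i-1}+1})$, and then the remaining product $\tau_{c_{i-1}}\cdots\tau_1 e(\widehat{\nu})\tau_1\cdots\tau_{c_{i-1}}$ collapses, via the relation $\tau_a^2 e(\nu)=Q_{\nu_a,\nu_{a+1}}(x_a,x_{a+1})e(\nu)$, to a product of (sign-twisted) $Q_{\nu^t,\nu^i}(x_d,x_{c_{i-1}+1})$ over $1\le t<i$ and $c_{t-1}<d\le c_t$, times $e(\wnu)$. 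The resulting $q_i=\pm a^\Lam_{\nu^i}({\rm x}_{c_{i-1}+1})\prod\widetilde{Q}_{\nu^t,\nu^i}({\rm x}_d,{\rm x}_{c_{i-1}+1})$ has ${\rm x}_{c_{i-1}+1}$-degree $\langle h_{\nu^i},\Lam\rangle+\sum_{t<i}b_t(-a_{\nu^i,\nu^t})=N^\Lam_i(\wnu)$ with invertible leading coefficient, because each $Q_{\nu^t,\nu^i}$ contributes top degree $-a_{\nu^i,\nu^t}$ in its second variable with coefficient in $\bk^\times$. Note that it is precisely the $\tau^2=Q$ relation for \emph{distinct} neighbours (not $\tau^2=0$) that does the work here, raising the degree to the correct value; your approach, built around $\tau_k^2=0$, cannot see this.
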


\begin{proof} Let $\widehat{\nu}$ be the $n$-tuple obtained by moving the $(c_{i-1}+1)$-th component of $\wnu$ (which is exactly $\nu^{i}$) to the first position and unchanging the relative positions of all the other components.
Consider $$\tau_{c_{i-1}}\tau_{c_{i-1}-1}\cdots\tau_{1}a^{\Lambda}_{\nu^{i}}(x_{1})e(\widehat{\nu})\tau_{1}\tau_{2}\cdots\tau_{c_{i-1}}.$$  By definition $a^{\Lambda}_{\nu^{i}}(x_{1})e(\widehat{\nu})=0$ in $\RR^\Lam(\beta)$. On the other hand, since $\nu^i\neq\nu^t$ for any $1\leq t<i$, we can deduce from the commutator relations in Definition \ref{qhs} that  $$
\tau_{c_{i-1}}\tau_{c_{i-1}-1}\cdots\tau_{1}a^{\Lambda}_{\nu^{i}}(x_{1})e(\widehat{\nu})=
(\pm a^{\Lambda}_{\nu^{i}}(x_{c_{i-1}+1}))\tau_{c_{i-1}}\tau_{c_{i-1}-1}\cdots\tau_{1}e(\widehat{\nu}).
$$
Also, from the defining relations in Definition \ref{qhs} we can get that $$
\tau_{c_{i-1}}\tau_{c_{i-1}-1}\cdots\tau_{1}e(\widehat{\nu})\tau_{1}\tau_{2}\cdots\tau_{c_{i-1}}
=\prod_{\substack{t=1}}^{i-1}\prod_{d=c_{t-1}+1}^{c_{t}}\widetilde{Q}_{\nu^{t},\nu^i}({\rm x}_d,{\rm x}_{c_{i-1}+1})e(\wnu),
$$
where $\widetilde{Q}_{\nu^{t},\nu^i}({\rm x}_d,{\rm x}_{c_{i-1}+1})$ is a polynomial on ${\rm x}_d,{\rm x}_{c_{i-1}+1}$ whose monomials are in one-to-one correspondence with the monomials of ${Q}_{\nu^{t},\nu^i}({\rm x}_d,{\rm x}_{c_{i-1}+1})$ and each monomial of $\widetilde{Q}_{\nu^{t},\nu^i}({\rm x}_d,{\rm x}_{c_{i-1}+1})$ differs with the corresponding monomials of ${Q}_{\nu^{t},\nu^i}({\rm x}_d,{\rm x}_{c_{i-1}+1})$ by $
\pm 1$. Now we define $$q_i=\pm {a^{\Lambda}_{\nu^{i}}}({\rm x}_{c_{i-1}+1})\prod_{\substack{t=1}}^{i-1}\prod_{d=c_{t-1}+1}^{c_{t}}\widetilde{Q}_{\nu^{t},\nu^{i}}({\rm x}_{d},{\rm x}_{c_{i-1}+1}).$$
A direct computation shows this is a polynomial in ${\rm x}_{c_{i-1}+1}$ of degree $N^\Lam_{i}(\wnu)$ with leading coefficient invertible and other coefficients in $P_{\wnu_{\leq c_{i-1}}}$, and it satisfies that
$q_ie(\wnu)=0$ in $e(\wnu)\RR^\Lam({\beta})e(\wnu)$.
\end{proof}

\begin{lem}\label{keylem3} Let $\Lam\in P^+$. Let $\wnu$ be given as in (\ref{wnu0}). Let $\beta\in Q_n^+$ such that $\wnu\in I^\beta$. Suppose that $e(\wnu)\neq 0$ in $\RR^\Lam(\beta)$.
For each $1\leq i\leq p$ and $c_{i-1}<k< c_{i}$, we define $\underline{\widetilde{S}_k}:=(s_k,s_{k-1},\cdots,s_{c_{i-1}+1})$. Then there is a set $$
\Sigma:=\bigl\{q_{\underline{e},k+1}\in P_{\wnu_{\leq k+1}}\bigm|\underline{e}=(e_k,e_{k-1},\cdots,e_{c_{i-1}+1}),\,
e_i\in\{0,1\},\forall\,c_{i-1}+1\leq i\leq k\bigr\}$$ such that \begin{enumerate}
\item each $q_{\underline{e},k+1}$ is a skew polynomial in ${\rm x}_{k+1}$ of degree $N^\Lam_{i}(\wnu)-(k-c_{i-1}-\ell(\underline{e}))$ with leading coefficient invertible and other coefficients in $P_{\wnu_{\leq k}}$.
\item $\sum_{\underline{e}}q_{\underline{e},k+1}\tau_{\underline{\widetilde{S}_k}^{\underline{e}}}=0$ holds in $e(\wnu)\RR^\Lam({\beta})e(\wnu)$.
\end{enumerate}
\end{lem}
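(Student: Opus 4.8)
The plan is to mimic, step for step, the proof of Lemma~\ref{keylem2}, with a single substitution: the place where that argument fed in the trivial relation $\tau_1{\rm x}_1^{\ell}=0$ will here be fed the annihilating polynomial $q_i$ supplied by Lemma~\ref{annihilator poly}. Fix $i$; the assertion is vacuous unless $b_i\geq 2$, so assume this and induct on $k$ over the range $c_{i-1}+1\leq k\leq c_i-1$. The entire argument takes place inside the block of positions $c_{i-1}+1,\dots,c_i$, all of which carry the label $\nu^i$, so every $\tau_{k+1}$ that occurs acts at two positions both labelled $\nu^i$; this is precisely what lets one compute inside $e(\wnu)\RR^\Lam(\beta)e(\wnu)$ as in the odd (or usual) nilHecke algebra, and in particular lets Lemma~\ref{multiply} govern the relevant $\tau$-products (note that $Q_{\nu^i,\nu^i}=0$, so $\tau_a^2e(\wnu)=0$ and the braid relation holds whenever the three adjacent labels coincide).

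First I would record the monochromatic analogue of \eqref{keyformula}: if $\nu_a=\nu_{a+1}=j$ and $\epsilon:=(-1)^{\rmp(j)}$, then the commutator relation of Definition~\ref{qhs} gives $\tau_a{\rm x}_a e(\nu)=\epsilon\,{\rm x}_{a+1}\tau_a e(\nu)-\epsilon\, e(\nu)$, and an easy induction on $l$ yields
$$
\tau_a{\rm x}_a^l e(\nu)=\epsilon^l{\rm x}_{a+1}^l\tau_a e(\nu)+\Bigl(\sum_{r+s=l-1}(\pm)\,{\rm x}_{a+1}^s{\rm x}_a^r\Bigr)e(\nu),
$$
which for $j\in I_{\rm odd}$ is exactly \eqref{keyformula}. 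For the base case $k=c_{i-1}+1$ I would take $q_i=\sum_j a_j{\rm x}_{c_{i-1}+1}^{j}$ as in Lemma~\ref{annihilator poly} (so $\deg q_i=N^\Lam_i(\wnu)$, top coefficient a unit, the remaining $a_j$ lying in $\mathcal P_{\wnu_{\leq c_{i-1}}}$) and, using $q_ie(\wnu)=0$ in $\RR^\Lam(\beta)$, left-multiply by $\tau_{c_{i-1}+1}$. Commuting $\tau_{c_{i-1}+1}$ rightwards past each $a_j$ (the $\tau{\rm x}=\pm{\rm x}\tau$ relation, which only produces signs) and then past ${\rm x}_{c_{i-1}+1}^{j}$ by the displayed identity, one reaches a relation $q_{(0),c_{i-1}+2}+q_{(1),c_{i-1}+2}\,\tau_{c_{i-1}+1}=0$ in $e(\wnu)\RR^\Lam(\beta)e(\wnu)$ in which $q_{(1),c_{i-1}+2}=\sum_j(\pm)a_j\epsilon^{j}{\rm x}_{c_{i-1}+2}^{j}$ has ${\rm x}_{c_{i-1}+2}$-degree $N^\Lam_i(\wnu)$ with invertible leading coefficient, while the remainder $q_{(0),c_{i-1}+2}$ is a polynomial in ${\rm x}_{c_{i-1}+2}$ of degree $N^\Lam_i(\wnu)-1$ whose leading term comes solely from the $j=N^\Lam_i(\wnu)$ summand (hence has invertible coefficient) and whose lower coefficients lie in $\mathcal P_{\wnu_{\leq c_{i-1}+1}}$; since $\ell((0))=0$ and $\ell((1))=1$, this is the claim for $k=c_{i-1}+1$.

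For the inductive step, assume the claim for some $k$ with $c_{i-1}+1\leq k\leq c_i-2$ and left-multiply the relation $\sum_{\underline e}q_{\underline e,k+1}\tau_{\underline{\widetilde S_k}^{\underline e}}=0$ by $\tau_{k+1}$. Since $k+1,k+2$ again lie in the $i$-th block, the same passage (commuting $\tau_{k+1}$ past the coefficients of $q_{\underline e,k+1}$, then past powers of ${\rm x}_{k+1}$ by the displayed formula) rewrites $\tau_{k+1}q_{\underline e,k+1}$ as $h_{k+2}+h'_{k+2}\tau_{k+1}$, where $h'_{k+2}$ has ${\rm x}_{k+2}$-degree $\deg q_{\underline e,k+1}$ and $h_{k+2}$ has degree $\deg q_{\underline e,k+1}-1$, each with invertible leading coefficient and remaining coefficients in $\mathcal P_{\wnu_{\leq k+1}}$; since $\deg q_{\underline e,k+1}=N^\Lam_i(\wnu)-(k-c_{i-1}-\ell(\underline e))$ these degrees are exactly $N^\Lam_i(\wnu)-\bigl((k+1)-c_{i-1}-\ell(1\underline e)\bigr)$ and $N^\Lam_i(\wnu)-\bigl((k+1)-c_{i-1}-\ell(0\underline e)\bigr)$. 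Using $\tau_{\underline{\widetilde S_k}^{\underline e}}=\tau_{\underline{\widetilde S_{k+1}}^{0\underline e}}$ and $\tau_{k+1}\tau_{\underline{\widetilde S_k}^{\underline e}}=\tau_{\underline{\widetilde S_{k+1}}^{1\underline e}}$ (the word $s_{k+1}s_k^{e_k}\cdots s_{c_{i-1}+1}^{e_{c_{i-1}+1}}$ being strictly decreasing in its indices, hence reduced, so that Lemma~\ref{multiply} applies), I would set $q_{0\underline e,k+2}:=h_{k+2}$ and $q_{1\underline e,k+2}:=h'_{k+2}$ and sum over $\underline e$ to obtain $\sum_{\underline f}q_{\underline f,k+2}\tau_{\underline{\widetilde S_{k+1}}^{\underline f}}=0$, the sum running over all strings $\underline f$ of length $k+1-c_{i-1}$, which is the claim for $k+1$.

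The hard part is not the shape of the argument, which is dictated by the proof of Lemma~\ref{keylem2}, but the sign- and degree-bookkeeping imposed by the super-structure: every time a $\tau$ passes an ${\rm x}$, or two ${\rm x}$'s are transposed, a sign $(-1)^{\rmp(\cdot)\rmp(\cdot)}$ intervenes, and one must check that the two invariants being propagated survive — namely that the leading coefficient in the ``new'' variable ${\rm x}_{k+2}$ remains a unit and that all lower coefficients remain in $\mathcal P_{\wnu_{\leq k+1}}$. Both follow at once from the observation that the ${\rm x}_{k+2}$-leading term of $h'_{k+2}$ (respectively $h_{k+2}$) is produced by a single monomial of $q_{\underline e,k+1}$, namely the top one, whose coefficient is a unit, so that no cancellation can occur, together with the fact that commuting $\tau_{k+1}$ rightwards never introduces a variable of index exceeding $k+2$. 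It is exactly this obligation to remain inside a monochromatic block that forces the staircase words $\underline{\widetilde S_k}$ into the statement, which is why the present proof genuinely departs from the non-super treatment in \cite{HS} rather than transcribing it.
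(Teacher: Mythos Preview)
Your proposal is correct and follows essentially the same route as the paper's proof: both start from the annihilating polynomial $q_i$ of Lemma~\ref{annihilator poly}, left-multiply by $\tau_{c_{i-1}+1}$ to launch the induction, and at each step left-multiply by the next $\tau_{k+1}$, split $\tau_{k+1}q_{\underline e,k+1}$ into $h_{k+2}+h'_{k+2}\tau_{k+1}$ via the commutator relations, and invoke Lemma~\ref{multiply} inside the monochromatic block. Your write-up is in fact more careful than the paper's in making explicit the monochromatic analogue of \eqref{keyformula}, in justifying why Lemma~\ref{multiply} is applicable here (since $Q_{\nu^i,\nu^i}=0$ forces $\tau_a^2e(\wnu)=0$ and kills the braid correction), and in checking that the leading coefficient in ${\rm x}_{k+2}$ stays a unit because it arises from a single top monomial.
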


\begin{proof} By Corollary \ref{symdim}, the assumption that $e(\wnu)\neq 0$ implies that $N^\Lam_{i}(\wnu)\geq b_{i}$ for any $1\leq i\leq p$. Let $1\leq i\leq p$ and $c_{i-1}< k< c_{i}$.

If $k=c_{i-1}+1$, then the lemma follows directly from Lemma \ref{annihilator poly}. In general, we use induction on $k-c_{i-1}$. Suppose the lemma holds when  $k-c_{i-1}<m$. We now assume that $k-c_{i-1}=m$. Set $\underline{\widetilde{S}_{k-1}}:=(s_{k-1},\cdots,s_{c_{i-1}})$.
By induction hypothesis,  there is a set of skew polynomials $\{q_{\underline{e}',k}\in\skp_{k}|\underline{e}'=(e_1,\cdots,e_{k-1}), e_i\in\{0,1\},\forall\,1\leq i\leq k-1\}$ such that $$ 0=\sum_{\underline{e}'}q_{\underline{e}',k}\tau_{\underline{\tilde{S}_{k-1}}^{\underline{e}'}},
$$
where $q_{\underline{e}',k}$ is a skew polynomial in ${\rm x}_{k}$ of degree $N^\Lam_{i}(\wnu)-(k-1-c_{i-1}-\ell(\underline{e}'))$ with leading coefficient invertible and other coefficients in $\text{SkPol}_{k-1}$. In particular, \begin{equation}\label{induction2} 0=\tau_{k}\sum_{\underline{e}}q_{\underline{e}',k}\tau_{\underline{\tilde{S}_{k-1}}^{\underline{e}'}}.
\end{equation}
Applying the commutator relations between $\tau_k$ and $x_k,x_{k+1}$ we can deduce that $$
\tau_{k}q_{\underline{e}',k}=h_{k+1}+h'_{k+1}\tau_{k},
$$
where $h_{k+1}$ (resp., $h'_{k+1}$) is a polynomial in ${\rm x}_{k+1}$ of degree $N^\Lam_{i}(\wnu)-(k-c_{i-1}-\ell(\underline{e}'))$ (resp., $N^\Lam_{i}(\wnu)-(k-1-c_{i-1}-\ell(\underline{e}'))=N^\Lam_{i}(\wnu)-(k-c_{i-1}-\ell(1\underline{e}'))$) with leading coefficient invertible and other coefficients in $\skp_{k}$.
Now we define $g_{0\underline{e}',k+1}:=h_{k+1},\, g_{1\underline{e}',k+1}:=h'_{k+1}$. Then the lemma follows from \eqref{induction2} and Lemma \ref{multiply}.
\end{proof}

%
%
%
%
%
%
%
%
%
%
%
%

\begin{thm}\label{mainthm3} Let $\Lam\in P^+$. Let $\wnu$ be given as in (\ref{wnu0}). Let $\beta\in Q_n^+$ such that $\wnu\in I^\beta$.
The following set \begin{equation}\label{base1}
\Bigl\{e(\wnu)\prod_{k=1}^{n}x_{k}^{r_{k}}\tau_{w }\Bigm|\begin{matrix}\text{$w\in\Sym_{\fb}$, for any $1\leq i\leq p$, $c_{i-1}<k\leq c_{i}$,}\\
\text{$0\leq r_{k}<N^\Lam(\wnu,k)$}\end{matrix}\Bigr\}
\end{equation} forms a $\bk$-basis of $e(\wnu)\RR^\Lam({\beta})e(\wnu)$.
\end{thm}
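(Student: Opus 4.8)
The plan is to deduce the theorem from the exact dimension formula of Corollary~\ref{symdim} by a pure spanning argument, so that linear independence never has to be addressed directly. If $e(\wnu)=0$ both sides vanish, so assume $e(\wnu)\neq 0$; then Corollary~\ref{symdim} gives $\dim_\bk e(\wnu)\RR^\Lam(\beta)e(\wnu)=\bigl(\prod_{i=1}^p b_i!\bigr)\bigl(\prod_{k=1}^n N^\Lam(\wnu,k)\bigr)$, and the same corollary forces $N^\Lam_i(\wnu)\geq b_i$, hence $N^\Lam(\wnu,k)\geq 1$ for every $k$. Consequently the family \eqref{base1} has at most $|\Sym_\fb|\cdot\prod_{k=1}^n N^\Lam(\wnu,k)=\dim_\bk e(\wnu)\RR^\Lam(\beta)e(\wnu)$ elements, so it will be enough to prove that \eqref{base1} spans $e(\wnu)\RR^\Lam(\beta)e(\wnu)$. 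By Proposition~\ref{stdBasis} together with the identity $\Sym(\wnu,\wnu)=\Sym_\fb$, that space is spanned by the standard monomials $e(\wnu)x_1^{t_1}\cdots x_n^{t_n}\tau_w$ with $w\in\Sym_\fb$ and $t_j\in\Z_{\geq 0}$, and I would show that each such monomial lies in the span of \eqref{base1}.

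Two structural facts are the engine of the reduction. First, inside a block $\{c_{i-1}+1,\dots,c_i\}$ every entry of $\wnu$ equals $\nu^i$, and since $Q_{\nu^i,\nu^i}=0$ the relation $\tau_a^2e(\wnu)=0$ and the ``clean'' braid relation $\tau_{a+1}\tau_a\tau_{a+1}e(\wnu)=\tau_a\tau_{a+1}\tau_ae(\wnu)$ hold; combined with the across-block commutations $\tau_a\tau_b e(\wnu)=\pm\tau_b\tau_a e(\wnu)$ for $|a-b|>1$ (which occur whenever $a,b$ are $\Sym_\fb$-generators from different blocks, since then $|a-b|\geq 2$), this shows that the subalgebra generated by $e(\wnu)$ and the $\tau_j e(\wnu)$ with $j\notin\{c_1,\dots,c_p\}$ degenerates to a product of ordinary and odd nilHecke algebras, so that the analogue of Lemma~\ref{multiply} holds: for $u,v\in\Sym_\fb$, $\tau_u\tau_v\,e(\wnu)=\pm\tau_{uv}\,e(\wnu)$ if $\ell(uv)=\ell(u)+\ell(v)$, and $\tau_u\tau_v\,e(\wnu)=0$ otherwise. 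Second, taking the $\underline e=\underline 0$ component of the identities in Lemma~\ref{annihilator poly} (for $k=c_{i-1}+1$) and Lemma~\ref{keylem3} (for $c_{i-1}+1<k\leq c_i$) yields, in $e(\wnu)\RR^\Lam(\beta)e(\wnu)$, a relation of the shape
$$
x_k^{N^\Lam(\wnu,k)}e(\wnu)=P_k\,e(\wnu)+\sum_{\underline e\neq\underline 0}R_{k,\underline e}\,\tau_{\underline{\widetilde{S}_{k-1}}^{\underline e}}\,e(\wnu),
$$
where $P_k$ and the $R_{k,\underline e}$ are skew polynomials in $x_1,\dots,x_k$ of $x_k$-degree $<N^\Lam(\wnu,k)$, and each $\tau_{\underline{\widetilde{S}_{k-1}}^{\underline e}}$ with $\underline e\neq\underline 0$ is a product of $s_{c_{i-1}+1},\dots,s_{k-1}$ of length $\geq 1$.

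The induction I would run is nested. Outermost: downward induction on $\ell(w)$, with base case $w=w_0$ the longest element of $\Sym_\fb$, where every $\tau_u\tau_{w_0}$ with $\ell(u)\geq 1$ already vanishes. For fixed $w$: lexicographic induction on the ``excess vector'' $(d_n,d_{n-1},\dots,d_1)$ of the standard monomial $x_1^{t_1}\cdots x_n^{t_n}$, with $d_k:=\max\{0,\,t_k-N^\Lam(\wnu,k)+1\}$. If all $d_k=0$ the monomial already lies in \eqref{base1}. Otherwise pick the largest $k$ with $d_k>0$, so $t_j<N^\Lam(\wnu,j)$ for all $j>k$; write $x_k^{t_k}=x_k^{t_k-N^\Lam(\wnu,k)}\,x_k^{N^\Lam(\wnu,k)}$ and substitute the displayed relation for $x_k^{N^\Lam(\wnu,k)}e(\wnu)$ (this is legitimate because $x_j$ commutes with $e(\wnu)$). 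The $P_k$-contribution re-expands, using skew-commutativity of $x_1,\dots,x_n$, into standard monomials whose $x_k$-degree has dropped to $\leq t_k-1$ while the degrees in $x_{k+1},\dots,x_n$ are unchanged, hence with strictly smaller excess vector; these are handled by the inner induction. In each $R_{k,\underline e}$-contribution, I would commute $\tau_{\underline{\widetilde{S}_{k-1}}^{\underline e}}$ (all of whose indices are $<k$) past $x_{k+1}^{t_{k+1}}\cdots x_n^{t_n}$ — only signs appear — and then collapse $\tau_{\underline{\widetilde{S}_{k-1}}^{\underline e}}\tau_w\,e(\wnu)$ by the first fact: it is either $0$ or $\pm\tau_{w''}e(\wnu)$ with $\ell(w'')=\ell(w)+\ell(\underline{\widetilde{S}_{k-1}}^{\underline e})>\ell(w)$; after re-expanding the surviving $x$-factors into standard monomials one gets terms $\pm(\text{standard monomial})\,\tau_{w''}$, handled by the outer induction. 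This closes both inductions and hence proves the claim.

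The main obstacle will be the bookkeeping in that reduction step, and it rests on the two places where the block structure is genuinely used: (i) the annihilator relation at position $k$ involves only $x_1,\dots,x_k$ and $\tau_{s_j}$ with $j<k$, so that substituting it really does leave the degrees in $x_{k+1},\dots,x_n$ untouched and the excess vector drops in lex order; and (ii) the degeneration of the intra-block relations to nilHecke type, which is exactly what keeps Lemma~\ref{multiply}'s clean multiplication rule valid in the bi-weight space and thus forces the $R_{k,\underline e}$-terms to move strictly upward in the length filtration. Carrying the various super signs through the skew-commutations is tedious but never obstructs anything. Finally, no separate linear-independence step is needed: it is Corollary~\ref{symdim} that promotes ``spanning'' to ``basis''.
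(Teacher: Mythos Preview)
Your argument is correct and essentially identical to the paper's: both reduce to a spanning argument via the dimension count of Corollary~\ref{symdim}, and both drive the spanning by substituting the annihilator relations of Lemmas~\ref{annihilator poly} and~\ref{keylem3}, absorbing the $\tau$-error terms through downward induction on $\ell(w)$ using the nilHecke-type multiplication rule (your ``first fact'', the paper's Lemma~\ref{multiply}); the only difference is bookkeeping---the paper runs a triple induction upward on $m$, downward on $\ell(w)$, upward on $t_m$, while you use $\ell(w)$ followed by lex order on the excess vector. One small inaccuracy that does not affect the argument: the $R_{k,\underline e}$ have $x_k$-degree $N^\Lam(\wnu,k)+\ell(\underline e)$, not $<N^\Lam(\wnu,k)$ (check the degree statement in Lemma~\ref{keylem3}), but since you dispose of those terms via the outer length induction rather than any degree bound, nothing breaks.
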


\begin{proof} We follow a similar idea used in the proof of Theorem \ref{mainthm2}. For each $1\leq m\leq n$, we define $$\Sigma_{\leq m}:=\{x_{1}^{k_1}\cdots x_{m}^{k_m}\tau_w\,|\,w\in\Sym_{\fb},\,0\leq k_j<N^\Lam(\wnu,j),\forall\,1\leq j\leq m\}.
$$
By Theorem \ref{symdim}, we know that $$
\dim e(\wnu)\RR^\Lam({\beta})e(\wnu)=\# \Sigma_{\leq n}.
$$
Therefore, to prove the theorem, it suffices to show that $\Sigma_{\leq n}$ spans the whole $\bk$-space $e(\wnu)\RR^\Lam({\beta})e(\wnu)$. We claim that for any $1\leq m\leq n$, any $w\in\Sym_{\fb},\,t_1,\cdots,t_m\geq 0$, \begin{equation}\label{generating2}
x_{1}^{t_1}\cdots x_{m}^{t_m}\tau_w\in\text{$\bk$-Span} \{z|z\in\Sigma_{\leq m}\} .
\end{equation}

In fact, using Lemma \ref{annihilator poly} and Lemma \ref{keylem3}, we can prove the above claim by induction upward on $m$, downward on $\ell(w)$ then upward on $t_m$ which is exactly the same argument used in the proof of Theorem \ref{mainthm2}. In particular, $\Sigma_{\leq n}$ is a $\bk$-linear spanning set of $e(\wnu)\RR^\Lam({\beta})e(\wnu)$. This completes the proof of the theorem.
\end{proof}

\bigskip

\section{Monomial bases and Indecomposability of some $\RR^\Lam(\beta)$}
Throughout this section, we shall assume that \begin{equation}\label{assump1}
\beta=\alpha_{1}+\alpha_{2}+\cdots+\alpha_{n}, \quad \alpha_i\neq\alpha_j,\,\forall\,1\leq i\neq j\leq n
\end{equation}
The purpose of this section is to construct an explicit monomial basis for $\RR^{\Lambda}(\beta)$ and show that $\RR^{\Lambda}(\beta)$ is indecomposable under the assumption (\ref{assump1}).

\begin{lem}\label{Lm41} Assume (\ref{assump1}) holds. Let $\mu\in I^{\beta}$. Then $e(\mu)\neq 0$ if and only if $N^\Lam(1,\mu,k)>0$ for any $1\leq k\leq n$. In this case, $\dim e(\mu)\RR^\Lam(\beta)e(\mu)=\prod_{k=1}^{n}N^\Lam(1,\mu,k)$.
\end{lem}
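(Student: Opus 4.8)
The plan is to deduce Lemma \ref{Lm41} directly from the dimension formulas in Corollary \ref{smaedimcor}. Under the assumption (\ref{assump1}), the simple roots $\alpha_1,\dots,\alpha_n$ appearing in $\beta$ are pairwise distinct, so for any $\mu\in I^\beta$ the components $\mu_1,\dots,\mu_n$ are pairwise distinct as well; consequently $\Sym(\mu,\mu)=\{1\}$, the trivial group. Indeed if $w\mu=\mu$ then $\mu_{w(k)}=\mu_k$ for all $k$, and distinctness forces $w(k)=k$. This is the key combinatorial simplification that makes the whole lemma collapse to a single term.

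First I would apply (\ref{ungradeddim}) with $\nu=\nu'=\mu$. Since $\Sym(\mu,\mu)=\{1\}$, the right-hand side reduces to $\prod_{t=1}^{n}N^\Lam(1,\mu,t)$, so
$$
\dim e(\mu)\RR^\Lam(\beta)e(\mu)=\prod_{k=1}^{n}N^\Lam(1,\mu,k).
$$
This already gives the last assertion of the lemma outright, independent of whether $e(\mu)$ vanishes. For the first assertion, note that $e(\mu)\neq 0$ in $\RR^\Lam(\beta)$ if and only if $e(\mu)\RR^\Lam(\beta)e(\mu)\neq 0$ (one direction is trivial; conversely $e(\mu)=e(\mu)^2\cdot e(\mu)\in e(\mu)\RR^\Lam(\beta)e(\mu)$, so if the bi-weight space is zero then $e(\mu)=0$). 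Hence $e(\mu)\neq 0$ if and only if $\prod_{k=1}^{n}N^\Lam(1,\mu,k)\neq 0$. Finally, each factor $N^\Lam(1,\mu,k)=\langle h_{\mu_k},\Lam-\sum_{j\in J_1^{<k}}\alpha_{\mu_j}\rangle=\langle h_{\mu_k},\Lam-\sum_{j<k}\alpha_{\mu_j}\rangle$ is a nonnegative integer, because for the identity permutation $J_1^{<k}=\{1,\dots,k-1\}$ and $N^\Lam(1,\mu,k)$ equals the $h_{\mu_k}$-coordinate of a dominant-minus-partial-root weight which is known (and can be checked by a standard induction, or cited from the non-super case) to stay nonnegative along any such path so long as $e(\mu)\neq0$. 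Rather than re-prove positivity of each factor I would simply observe: a product of integers is nonzero iff every factor is nonzero, and each $N^\Lam(1,\mu,k)$ is a nonnegative integer, so $\prod_k N^\Lam(1,\mu,k)\neq 0$ iff $N^\Lam(1,\mu,k)>0$ for every $k$. This yields the stated equivalence.

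The only mild subtlety — the step I would flag as the "hard part," though it is quite soft here — is justifying that each $N^\Lam(1,\mu,k)$ is genuinely a \emph{nonnegative} integer, so that "product nonzero" can be upgraded to "each factor strictly positive." A priori $\langle h_{\mu_k},\Lam-\sum_{j<k}\alpha_{\mu_j}\rangle$ could be negative, in which case the product could still be nonzero while some factor is negative. However, if some $N^\Lam(1,\mu,k)<0$, one can show $e(\mu)=0$: truncating $\mu$ to its first $k$ entries and using the induction scheme already set up in the proof of Theorem \ref{mainthmA} (equation (\ref{ind})), a negative value of the relevant weight pairing forces the corresponding $[\;\cdot\;]^\pi_{\mu_k}$ factor, hence the whole dimension, to vanish; alternatively this is immediate from the explicit combinatorics of the cyclotomic relation $a^\Lam(x_1)=0$. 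Thus negativity of a factor already implies $e(\mu)=0$, and on the complementary locus where $e(\mu)\neq0$ all factors are $\geq 0$, giving the clean statement. I would present this as a one-line remark rather than a separate argument, citing the analogous non-super statement, and otherwise the proof is a direct corollary of Corollary \ref{smaedimcor}.
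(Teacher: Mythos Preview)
Your approach is exactly the paper's: reduce to the ungraded dimension formula (Corollary~\ref{smaedimcor}), observe that assumption~(\ref{assump1}) forces $\Sym(\mu,\mu)=\{1\}$, and read off $\dim e(\mu)\RR^\Lam(\beta)e(\mu)=\prod_{k}N^\Lam(1,\mu,k)$. The paper routes this through Corollary~\ref{symdim} (the special case $b_1=\cdots=b_p=1$), but that is the same computation.

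One remark on the ``mild subtlety'' you flag. Your worry that some $N^\Lam(1,\mu,k)$ might be negative is unnecessary here, and your proposed fix (that $[m]^\pi_{\mu_k}$ vanishes for $m<0$) is actually false: the quantum integer $[m]^\pi_i$ is nonzero for negative $m$. The clean reason nonnegativity holds is much simpler and specific to assumption~(\ref{assump1}): since $\mu_1,\dots,\mu_n$ are pairwise distinct, for each $j<k$ we have $\mu_j\neq\mu_k$ and hence $a_{\mu_k,\mu_j}\le 0$ by the Cartan matrix axioms. Therefore
\[
N^\Lam(1,\mu,k)=\langle h_{\mu_k},\Lam\rangle-\sum_{j<k}a_{\mu_k,\mu_j}\ \ge\ 0,
\]
as $\Lam\in P^+$. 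With this in hand, ``product nonzero'' is immediately equivalent to ``each factor strictly positive'', and no truncation or induction is needed.
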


\begin{proof} This follows directly from Theorem \ref{symdim}.
\end{proof}

Note that the assumption (\ref{assump1}) implies that for $\mu,\nu\in I^\beta$, there exists a unique $d_{\mu,\nu}\in\Sym_n$ such that $d_{\mu,\nu}\mu=\nu$. Using Corollary \ref{smaedimcor} we see that $\dim e(\mu)\RR^\Lam(\beta)e(\nu)=\prod_{k=1}^{n}N^\Lam(d_{\mu,\nu},\mu,k)$. Thus, we have the following result.

\begin{cor}\label{cor42}  Assume (\ref{assump1}) holds. Let $\mu,\nu\in I^{\beta}$. The $e(\mu)\RR^\Lam(\beta)e(\nu)\neq 0$
if and only if $N^\Lam(d_{\mu,\nu},\mu,k)\neq 0$ for any $1\leq k\leq n$.
\end{cor}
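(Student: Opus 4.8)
The plan is to deduce this statement directly from the ungraded dimension formula (\ref{ungradeddim}) in Corollary \ref{smaedimcor}, exactly along the lines already indicated in the paragraph preceding the statement. The crucial observation is that under the assumption (\ref{assump1}) every element of $I^\beta$ is an $n$-tuple with pairwise distinct entries (they are precisely the symbols attached to $\alpha_1,\dots,\alpha_n$, in some order). Consequently, for any $\mu,\nu\in I^\beta$ the set $\Sym(\mu,\nu)=\{w\in\Sym_n\mid w\mu=\nu\}$ contains at most one element, and it contains exactly one, namely $d_{\mu,\nu}$, since $\mu$ and $\nu$ are rearrangements of the same tuple of distinct symbols.

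First I would substitute $\Sym(\mu,\nu)=\{d_{\mu,\nu}\}$ into (\ref{ungradeddim}), which collapses the sum to a single term and yields
$$\dim_{\bk} e(\mu)\RR^\Lam(\beta)e(\nu)=\prod_{t=1}^{n}N^\Lam(d_{\mu,\nu},\mu,t).$$
Then $e(\mu)\RR^\Lam(\beta)e(\nu)\neq 0$ is equivalent to this dimension being nonzero, and since a finite product of integers is nonzero if and only if each of its factors is nonzero, this is in turn equivalent to $N^\Lam(d_{\mu,\nu},\mu,k)\neq 0$ for every $1\leq k\leq n$. That is precisely the asserted equivalence.

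There is essentially no obstacle here: the corollary is an immediate specialisation of Corollary \ref{smaedimcor} together with the elementary fact that a product vanishes exactly when one of its factors does. The only point that deserves to be stated with care is the equality $|\Sym(\mu,\nu)|=1$, which rests entirely on the distinctness of the entries of the tuples in $I^\beta$ forced by (\ref{assump1}); everything else is formal.
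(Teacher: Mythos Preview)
Your proposal is correct and follows exactly the argument the paper gives in the paragraph immediately preceding Corollary~\ref{cor42}: under assumption~(\ref{assump1}) one has $\Sym(\mu,\nu)=\{d_{\mu,\nu}\}$, so (\ref{ungradeddim}) reduces to the single product $\prod_{k=1}^{n}N^\Lam(d_{\mu,\nu},\mu,k)$, and the equivalence follows. There is nothing to add.
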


\begin{lem}\label{keylem4} Assume (\ref{assump1}) holds. Let $1\leq k\leq n$ with $N^\Lam(d_{\mu,\nu},\mu,k)>0$. Then there exists an element $p_{k}\in \mathcal{P}_{\nu_{\leq k}}$ which can be viewed as a polynomial in ${\rm x}_{k}$ of degree $N^\Lam(d_{\mu,\nu},\mu,k)$ with leading coefficient invertible and other coefficients in $\mathcal{P}_{\nu_{< k}}$. Moreover, $ \tau_{d_{\mu,\nu}}p_{k}e(\mu)$ is a zero element in $e(\nu)\RR^\Lam({\beta})e(\mu)$.
\end{lem}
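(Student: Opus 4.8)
The plan is to imitate the construction in Lemma \ref{annihilator poly}, but now working with a single reduced word for $d_{\mu,\nu}$ rather than a cycle that pulls one component to the front. First I would fix a reduced expression $d_{\mu,\nu}=s_{j_1}\cdots s_{j_\ell}$ and set $\tau_{d_{\mu,\nu}}:=\tau_{j_1}\cdots\tau_{j_\ell}$. The starting point is the cyclotomic relation $a^\Lambda_{\mu_1}(x_1)e(\mu)=0$ in $\RR^\Lam(\beta)$, which says $x_1^{\langle h_{\mu_1},\Lambda\rangle}e(\mu)=0$. The strategy is to transport this relation along the permutation $d_{\mu,\nu}$ using the commutation/straightening relations of Definition \ref{qhs}: commuting the $\tau$'s past the power of $x_1$ successively replaces $x_1$ by $x_k$ (where $k=d_{\mu,\nu}(1)$, i.e.\ the position that picks up the first component) and, because the indices $\mu_1=\nu_k$ differs from all the $\mu_t$ it passes (assumption (\ref{assump1}) guarantees all indices in $\mu$ are distinct, so in particular $\nu_k\ne \nu_t$ for $t<k$), each crossing relation $\tau_a^2 e = Q_{\nu_a,\nu_{a+1}}(x_a,x_{a+1})e$ contributes an invertible-leading-coefficient factor rather than a $\delta$-term. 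Collecting these factors, together with the transported power of $x_k$, yields an element $p_k\in\mathcal P_{\nu_{\le k}}$ which, regarded as a polynomial in ${\rm x}_k$, has degree $\langle h_{\mu_1},\Lambda\rangle + \sum (\text{degrees of the relevant }Q)$ and invertible leading coefficient, with remaining coefficients in $\mathcal P_{\nu_{<k}}$.

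The key bookkeeping step is to verify that the resulting degree in ${\rm x}_k$ equals exactly $N^\Lam(d_{\mu,\nu},\mu,k)=\langle h_{\nu_k},\Lambda-\sum_{j\in J^{<k}_{d_{\mu,\nu}}}\alpha_{\nu_j}\rangle$. This follows from the homogeneity conditions (i) on the $Q_{ij}$: a monomial ${\rm x}_1^r{\rm x}_2^s$ of $Q_{ij}$ satisfies $-2(\alpha_i|\alpha_j)-r(\alpha_i|\alpha_i)-s(\alpha_j|\alpha_j)=0$, so the contribution in the ${\rm x}_k$-variable of each crossing with a component indexed $\nu_j$ is (up to normalization by $\mathrm d_{\nu_k}$) controlled by $-(\alpha_{\nu_k}|\alpha_{\nu_j})/\mathrm d_{\nu_k} = -a_{\nu_k,\nu_j}=-\langle h_{\nu_k},\alpha_{\nu_j}\rangle$; summing over the crossings that occur in the reduced word — which are precisely those $j<k$ with $d_{\mu,\nu}^{-1}$ placing $j$ before $k$, i.e.\ $j\in J^{<k}_{d_{\mu,\nu}}$ — and adding the initial $\langle h_{\mu_1},\Lambda\rangle=\langle h_{\nu_k},\Lambda\rangle$ gives exactly $N^\Lam(d_{\mu,\nu},\mu,k)$. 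I would phrase this using the $\widetilde Q$-notation already introduced in the proof of Lemma \ref{annihilator poly}: explicitly $p_k = \pm\, a^\Lambda_{\mu_1}({\rm x}_k)\prod_{j\in J^{<k}_{d_{\mu,\nu}}}\widetilde Q_{\nu_j,\nu_k}({\rm x}_j,{\rm x}_k)$, and then read off the degree from condition (i) together with condition (ii) ($t_{i,j;(-a_{ij},0)}\in\bk^\times$) which ensures the leading coefficient is invertible.

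Finally I would check that $\tau_{d_{\mu,\nu}}p_k e(\mu)=0$ in $e(\nu)\RR^\Lam(\beta)e(\mu)$: this is exactly the content of the transport argument above — starting from $a^\Lambda_{\mu_1}(x_1)e(\widehat\mu)=0$ where $\widehat\mu$ is the intermediate tuple, sliding the $\tau$'s through produces $\tau_{d_{\mu,\nu}}$ on the left, $e(\mu)$ on the right, and the accumulated polynomial $p_k$ in between (up to sign). The main obstacle I anticipate is purely combinatorial: carefully tracking, for a general reduced word of $d_{\mu,\nu}$, exactly which pairs of strands cross and therefore which $Q$-factors and which degree contributions appear, and confirming this is independent of the chosen reduced expression — but since $\tau_{d_{\mu,\nu}}$ is only well-defined up to sign anyway (Lemma \ref{multiply} / Proposition \ref{stdBasis}) and the set of crossing pairs $\{(j,k): j<k,\ d^{-1}_{\mu,\nu}(j)<d^{-1}_{\mu,\nu}(k) \text{ realized as inversions}\}$ is a braid-invariant, this is manageable. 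Once $p_k$ is in hand, the lemma is proved; the distinctness hypothesis (\ref{assump1}) is what makes the whole computation clean, since no $\delta_{\nu_a,\nu_{a+1}}$-terms ever intervene.
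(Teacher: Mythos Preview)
There is a genuine gap. The lemma requires a polynomial $p_k$ for \emph{every} $1\le k\le n$, but your construction fixes $k=d_{\mu,\nu}(1)$ and starts from $a^\Lambda_{\mu_1}(x_1)e(\mu)=0$. Left-multiplying this relation by $\tau_{d_{\mu,\nu}}$ gives exactly $\tau_{d_{\mu,\nu}}p_1e(\mu)=0$ with $p_1=x_1^{\langle h_{\mu_1},\Lambda\rangle}$, i.e.\ only the case $k=1$; commuting the $\tau$'s to the left of the $x$-power does not manufacture a relation $\tau_{d_{\mu,\nu}}p_k e(\mu)=0$ for a polynomial in $x_k$ on the $\mu$ side, it merely rewrites the same zero as an element of $e(\nu)\RR^\Lambda(\beta)$. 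More seriously, your procedure never produces any $\tau_a^2$, so the $Q$-factors you invoke cannot arise: sliding $\tau$'s past $x$'s uses only the linear commutators $\tau_a x_b=\pm x_{s_a(b)}\tau_a$ (the $\delta$-terms vanish by (\ref{assump1})), not the quadratic relation $\tau_a^2=Q$. Consequently the explicit formula you write down is not the output of the transport you describe, and its indices are in any case off: since $p_k$ multiplies $e(\mu)$ and $J^{<k}_{d_{\mu,\nu}}$ indexes positions in $\mu$, the factors must involve $\mu_k$ and $\mu_j$ rather than $\mu_1,\nu_k,\nu_j$.

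The missing idea is a factorisation of $d_{\mu,\nu}$ tailored to $k$. Set $i:=d_{\mu,\nu}(k)$ and $\mathcal J_i:=\{m<k:d_{\mu,\nu}(m)>i\}$, the positions before $k$ that land \emph{after} $i$. A length-additive permutation $u_1$ slides these $g:=|\mathcal J_i|$ entries to positions $k-g+1,\dots,k$, so that in $\mu^{[1]}:=u_1\mu$ the entry $\mu_k$ sits at position $k-g$, preceded precisely by the $\mu_j$ with $j\in J^{<k}_{d_{\mu,\nu}}$. Now one runs the sandwich of Lemma~\ref{annihilator poly}, namely $\tau_{k-g-1}\cdots\tau_1\,a^\Lambda_{\mu_k}(x_1)e(\widehat\mu)\,\tau_1\cdots\tau_{k-g-1}$, which \emph{does} create $\tau_a^2$ terms and hence the $\widetilde Q$-factors; this yields $p_k^{[1]}e(\mu^{[1]})=0$ with $p_k^{[1]}$ of degree $N^\Lambda(d_{\mu,\nu},\mu,k)$ in $x_{k-g}$. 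Commuting back through $\tau_{u_1}$ (only sign changes, no $\delta$'s) gives $\tau_{u_1}p_ke(\mu)=0$ for the desired $p_k$, and since $d_{\mu,\nu}=u_2u_1$ with $\ell(d_{\mu,\nu})=\ell(u_2)+\ell(u_1)$ one concludes $\tau_{d_{\mu,\nu}}p_ke(\mu)=\tau_{u_2}\cdot 0=0$. It is this factorisation $d_{\mu,\nu}=u_2u_1$, together with the sandwich producing the $Q$-factors, that your plan lacks.
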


\begin{proof}
Suppose $\mu_k=\nu_i$, where $1\leq i\leq n$. We define $\mathcal{J}_i:=\{1\leq m<k|d_{\mu,\nu}(m)>{i}\}$ and write $$
\mathcal{J}_i=\{m_j|1\leq j\leq g, 1\leq m_{1}<m_{2}<\cdots<m_{g}<k\}.
$$
Then $\mathcal{J}_i=\{1\leq m\leq k|\mu_m=\nu_t, i<t\leq n\}$.

We consider the following products of cycles: $$\begin{aligned}
u_1:&=(k-g+1,k-g,\cdots,m_1+1,m_1)(k-g+2,k-g+1,\cdots,m_2+1,m_2)\cdots \\
&\qquad\qquad (k,k-1,\cdots,m_g+1,m_g) .\end{aligned}
$$
We have $$
u_1=(s_{k-g}\cdots s_{m_{1}+1}s_{m_{1}})(s_{k-g+1}\cdots s_{m_2+1}s_{m_2})\cdots (s_{k-1}\cdots s_{m_g+1}s_{m_g}),
$$
and this is a reduced expression of $u_1$. We set $\mu^{[1]}:= u_1\mu$. In other words, $\mu^{[1]}$ is obtained from $\mu$ by moving its $m_1$-th, $\cdots$, $m_g$-th components to the $(k-g+1)$-th, $\cdots$, $k$-th positions respectively, and unchanging the relative positions of all the remaining components of $\mu$.
In particular, we have $\mu^{[1]}_{k-g}=\mu_k=\nu_i$.

Let $\widehat{\mu}$ be the $n$-tuple obtained from $\mu^{[1]}$ by moving the $(k-g)$-th component $\mu^{[1]}_{k-g}$ (which is equal to $\mu_k$ by construction) of $\mu^{[1]}$  to the first position and unchanging the relative positions of all the other components. We consider $$\tau_{k-g}\tau_{k-g-1}\cdots\tau_2\tau_{1}a^{\Lambda}_{\nu^{i}}(x_{1})e(\widehat{\mu})\tau_{1}\tau_{2}\cdots\tau_{k-g-1}\tau_{k-g}.
$$
The same argument as in the proof of Lemma \ref{annihilator poly} shows that this equals to $p_k^{[1]}e(\mu^{[1]})$, where $p_k^{[1]}$ can be viewed as a polynomial in ${\rm x}_{k-g}$ of degree $N^\Lam(d_{\mu,\nu},\mu,k)$ with leading coefficient in $\bk^\times$ and other coefficients in $\mathcal{P}_{\mu^{[1]}_{< k-g}}$. Clearly, this is zero in $\RR^\Lam({\beta})e(\mu^{[1]})$.

By the commutator relations between $\tau_r$ and $x_s$, we can find a skew polynomial $p_{k}\in\mathcal{P}_{\nu_{\leq k}}$ in ${\rm x}_{k}$ of degree $N^\Lam(d_{\mu,\nu},\mu,k)$ with leading coefficient invertible and other coefficients in $\mathcal{P}_{\nu_{< k}}$ and such that $p_k^{[1]}\tau_{u_1}e(\mu)=\tau_{u_1}p_k e(\mu)$. In fact, up to a sign on each monomial, $p_{k}$ is equal to $u_1^{-1}(p_k^{[1]})$. Thus we have $$\begin{aligned}\label{poly}
\tau_{u_1}p_k e(\mu)=p_k^{[1]}\tau_{u_1}e(\mu)=p_k^{[1]}e(\mu^{[1]})\tau_{u_1}=0.
\end{aligned}
$$
Finally, by construction we can find $u_2\in\Sym_n$ such that $d_{\mu,\nu}=u_2u_1$, and $\ell(d_{\mu,\nu})=\ell(u_2)+\ell(u_1)$. Note that under our assumption (\ref{assump1}), $\tau_{d_{\mu,\nu}}$ depends only on $d_{\mu,\nu}$ but not on the choices of the reduced expression of $d_{\mu,\nu}$. Hence we complete our proof.
\end{proof}

\medskip
\noindent
{\bf{Proof of Theorem \ref{mainthmC} Part 1)}:} This follows from Lemma \ref{keylem4} and Corollary \ref{smaedimcor}.\qed\medskip
%
%

For each $\beta\in Q_n^+$, let $\mathcal{Z}:=Z(\RR^\Lam(\beta))$ be the center of $\RR^\Lam(\beta)$. Then $\mathcal{Z}$ is naturally $\Z$-graded. Let $\mathcal{Z}_0$ be the degree $0$ component of $\mathcal{Z}$. It is well-known that the quiver Hecke superalgebra $\RR(\beta)$ is indecomposable because $\mathcal{Z}_0=\bk e(\beta)$ by \cite[\S4.5]{HW},  where $e(\beta):=\oplus_{\nu\in I^\beta}e(\nu)$. However, this is unclear for the cyclotomic quiver Hecke superalgebra $\RR^\Lam(\beta)$. Even for the usual cyclotomic quiver Hecke algebra, this is unclear except for some special cases. In the rest of this section, we shall study the indecomposability of the quiver Hecke superalgebra $\RR^\Lam(\beta)$.

\begin{lem}\label{lem43} Assume (\ref{assump1}) holds. Then $$
\mathcal{Z}\subseteq\bigoplus_{\mu\in I^\beta}\mathcal{P}_{\mu}e(\mu).
$$
\end{lem}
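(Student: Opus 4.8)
The plan is to show that every homogeneous central element $z\in\mathcal{Z}$ already lies in the subspace $\bigoplus_{\mu\in I^\beta}\mathcal{P}_\mu e(\mu)$, i.e. that its "off-diagonal" components $e(\nu)z e(\mu)$ with $\nu\neq\mu$ all vanish. First I would write $z=\sum_{\mu,\nu\in I^\beta}e(\nu)z e(\mu)$ using the orthogonal idempotent decomposition $e(\beta)=\sum_{\nu\in I^\beta}e(\nu)$, and observe that centrality forces $e(\nu)z=z e(\nu)$ for each $\nu$, so in fact $z=\sum_{\mu}e(\mu)z e(\mu)$ already kills the components with $\nu\neq\mu$. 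The point is then only to see that each diagonal block $e(\mu)z e(\mu)$ lies in $\mathcal{P}_\mu e(\mu)$, i.e. that the central element has no $\tau$-part in each bi-weight space.

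The key step is to use the explicit monomial basis of $e(\mu)\RR^\Lam(\beta)e(\mu)$ supplied by Theorem \ref{mainthm3} (equivalently Theorem \ref{mainthmB}): under assumption (\ref{assump1}) the group $\mathfrak{S}_{\fb}$ is trivial (all $b_i=1$), so that basis is simply $\{e(\mu)\prod_k x_k^{r_k}\mid 0\le r_k<N^\Lam(1,\mu,k)\}$, which is precisely a basis of $\mathcal{P}_\mu e(\mu)$ (modulo the cyclotomic relations). Hence $e(\mu)\RR^\Lam(\beta)e(\mu)=\mathcal{P}_\mu e(\mu)$ as a vector space when (\ref{assump1}) holds, and therefore $e(\mu)z e(\mu)\in\mathcal{P}_\mu e(\mu)$ automatically. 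Combining with the first step, $z=\sum_\mu e(\mu)z e(\mu)\in\bigoplus_\mu\mathcal{P}_\mu e(\mu)$, which is the assertion.

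So in fact under (\ref{assump1}) the lemma is almost immediate from the earlier structural results; the only thing requiring a line of care is the identification $e(\mu)\RR^\Lam(\beta)e(\mu)=\mathcal{P}_\mu e(\mu)$, which one reads off from the shape of the basis in Theorem \ref{mainthm3} together with the fact that $\mathfrak{S}_{\fb}$ degenerates to $\{1\}$ in this case, and from the Remark identifying the subalgebra generated by the $x_k e(\mu)$ with a quotient of $\mathcal{P}_\mu$. One should double-check that $N^\Lam(1,\mu,k)$ is exactly the degree bound making $\{\prod_k x_k^{r_k}e(\mu)\}$ a basis of $\mathcal{P}_\mu e(\mu)$ inside $\RR^\Lam(\beta)$, which follows from Lemma \ref{Lm41}.

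The main (and really only) obstacle is conceptual rather than technical: one must be careful that the monomial basis of Theorem \ref{mainthm3} genuinely spans $e(\mu)\RR^\Lam(\beta)e(\mu)$ and not a proper subspace — but this is exactly the content of that theorem, so no further work is needed. If one wanted a proof independent of Theorem \ref{mainthmB}, one could instead argue directly: for $\nu\neq\mu$, pick an index $k$ where $\mu$ and $\nu$ first differ and use that $e(\nu)z e(\mu)$, being a sum of basis elements $e(\nu)x^{\ua}\tau_{d_{\mu,\nu}}e(\mu)$ with a genuinely positive-length permutation $d_{\mu,\nu}$, is annihilated by commuting with a suitable power of $x_1,\dots,x_n$ (using the distinctness of the $\alpha_i$) — but invoking Theorem \ref{mainthm3} is cleanest and is the approach I would take.
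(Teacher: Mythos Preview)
Your proposal is correct and follows the same two-step outline as the paper: first use $ze(\mu)=e(\mu)z$ to reduce to the diagonal blocks, then identify $e(\mu)\RR^\Lam(\beta)e(\mu)=\mathcal{P}_\mu e(\mu)$. The only difference is in how you justify the second step: you invoke Theorem~\ref{mainthm3} (noting that under (\ref{assump1}) one has $b_i=1$ for all $i$, so $\Sym_{\fb}=\{1\}$), whereas the paper simply asserts that (\ref{assump1}) forces $e(\mu)\RR^\Lam(\beta)e(\mu)=\mathcal{P}_\mu e(\mu)$. The paper's implicit reasoning is more elementary: by Proposition~\ref{stdBasis} the space $e(\mu)\RR(\beta)e(\mu)$ is spanned by monomials $e(\mu)x^{\underline{a}}\tau_w$ with $w\mu=\mu$, and distinctness of the $\mu_k$ forces $w=1$; hence already in $\RR(\beta)$ one has $e(\mu)\RR(\beta)e(\mu)=\mathcal{P}_\mu e(\mu)$, and the equality survives in the cyclotomic quotient by surjectivity. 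Your route via Theorem~\ref{mainthm3} is valid but uses a stronger result than necessary.
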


\begin{proof} For any $x\in\mathcal{Z}$ and $\mu\in I^\beta$, we have that $ze(\mu)=e(\mu)z$. It follows that $$
\mathcal{Z}\subseteq\bigoplus_{\mu\in I^\beta}e(\mu)\RR^\Lam(\beta)e(\mu).
$$
Now the assumption (\ref{assump1}) implies that $e(\mu)\RR^\Lam(\beta)e(\mu)=\mathcal{P}_{\mu}e(\mu)$. This proves the lemma.
\end{proof}

\begin{dfn} Let $\mu,\nu\in I^\beta$ with  $e(\mu)\neq 0\neq e(\nu)$ in $\RR^\Lam(\beta)$. We define $\mu\sim\nu$ if and only if there exists a sequence
$\mu^{[0]}:=\mu, \mu^{[1]},\cdots,\mu^{[k-1]},\mu^{[k]}:=\nu\in I^\beta$ such that for any $1\leq t\leq k$, $$
e(\mu^{[t-1]})\RR^\Lam(\beta)e(\mu^{[t]})\neq 0 .
$$
\end{dfn}
It is clear that ``$\sim$'' is an equivalence relation on $\{\mu\in I^\beta|\text{$e(\mu)\neq 0$ in $\RR^\Lam(\beta)$}\}$.

\begin{prop}\label{prop41} Assume (\ref{assump1}) holds. Let $\mu,\nu\in I^\beta$ with $e(\mu)\neq 0\neq e(\nu)$ in $\RR^\Lam(\beta)$. Then we have $\mu\sim\nu$.
\end{prop}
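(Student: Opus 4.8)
The plan is to reduce Proposition~\ref{prop41} to a purely combinatorial connectivity statement and then establish that statement by induction on $n$. \textbf{Step 1 (reduction to adjacent transpositions).} Under (\ref{assump1}) the set $I^\beta$ consists of the $n!$ rearrangements of $(\alpha_1,\dots,\alpha_n)$, and for $\mu\in I^\beta$, $1\le a<n$ we write $s_a\mu$ for the tuple obtained by swapping the $a$-th and $(a+1)$-th entries, so that $d_{\mu,s_a\mu}=s_a$. A direct inspection of the sets $J_w^{<t}$ in Definition~\ref{keydfn1} for $w=s_a$ gives $N^\Lam(s_a,\mu,k)=N^\Lam(1,\mu,k)$ for every $k\ne a+1$, and $N^\Lam(s_a,\mu,a+1)=N^\Lam(1,s_a\mu,a)$. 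Feeding this into the formula $\dim e(\mu)\RR^\Lam(\beta)e(\nu)=\prod_k N^\Lam(d_{\mu,\nu},\mu,k)$ recalled before Corollary~\ref{cor42}, and using Lemma~\ref{Lm41}, we see that if $e(\mu)\ne 0$ and $e(s_a\mu)\ne 0$ then all factors $N^\Lam(s_a,\mu,k)$ are positive, hence $e(\mu)\RR^\Lam(\beta)e(s_a\mu)\ne 0$ and $\mu\sim s_a\mu$. Consequently it suffices to show that the graph $G$ on the vertex set $\{\mu\in I^\beta : e(\mu)\ne 0\}$, with $\mu$ joined to $s_a\mu$ whenever both are vertices, is connected.

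\textbf{Step 2 (combinatorial reformulation).} By Lemma~\ref{Lm41} and $N^\Lam(1,\mu,k)=\langle h_{\mu_k},\Lam\rangle+\sum_{j<k}(-a_{\mu_k\mu_j})$, a sum of non-negative integers, we have $e(\mu)\ne 0$ if and only if for each $k$ either $\langle h_{\mu_k},\Lam\rangle>0$ or $a_{\mu_k\mu_j}\ne 0$ for some $j<k$. Setting $S:=\{i : \langle h_i,\Lam\rangle>0\}$ and letting $\Gamma$ be the graph on $\{1,\dots,n\}$ with $i,j$ adjacent iff $a_{ij}\ne 0$, the vertices of $G$ are exactly the orderings $(\mu_1,\dots,\mu_n)$ of $\{1,\dots,n\}$ in which every $\mu_k$ is either a seed in $S$ or $\Gamma$-adjacent to a previously listed entry. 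We must prove that these \emph{search orderings} of $(\Gamma,S)$ form a single connected component under validity-preserving adjacent transpositions.

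\textbf{Step 3 (the connectivity induction).} The engine is an elementary \emph{move-a-seed-to-the-front} lemma: if $(\mu_1,\dots,\mu_n)$ is a search ordering and $\mu_p\in S$, then applying $s_{p-1},s_{p-2},\dots,s_1$ in turn passes only through search orderings, ending at $(\mu_p,\mu_1,\dots,\widehat{\mu_p},\dots,\mu_n)$; indeed each swap puts the seed $\mu_p$ into a position where validity is automatic, moves the displaced entry one slot to the right (which only enlarges its set of predecessors), and leaves every other position with exactly the same set of predecessors. Now induct on $n$, the case $n\le 1$ being trivial. Given search orderings $\mu$ and $\mu'$, their first entries lie in $S$ (a first entry has no predecessor), so by the lemma we may bring $\mu'_1$ to the front of $\mu$, obtaining $\mu''$ with $\mu''_1=\mu'_1$. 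Deleting this common first entry turns $\mu''$ and $\mu'$ into search orderings of the residual pair $(\Gamma',S')$, where $\Gamma'=\Gamma\setminus\{\mu'_1\}$ and $S'=(S\setminus\{\mu'_1\})\cup\{\,\Gamma\text{-neighbours of }\mu'_1\,\}$; moreover, an adjacent transposition of a residual ordering preserves validity exactly when the corresponding transposition performed one slot to the right, and fixing the first slot, preserves validity of the full ordering. Hence the induction hypothesis connects the residuals of $\mu''$ and $\mu'$, and this path lifts to a path in $G$; therefore $\mu\sim\mu'$.

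\textbf{Main obstacle.} Steps~1 and~2 are mechanical applications of Theorem~\ref{mainthmA} and Corollary~\ref{smaedimcor} together with Definition~\ref{keydfn1}; the substance lies in Step~3. The delicate points are the move-a-seed-to-the-front lemma, which is exactly what lets one line up the leading entries of two arbitrary search orderings, and the realization that the recursion must be phrased combinatorially in terms of $(\Gamma',S')$ rather than in terms of a cyclotomic quiver Hecke superalgebra: passing from $(\Gamma,S)$ to $(\Gamma',S')$ mirrors deleting the vertex $\mu'_1$, but the weight $\Lam-\alpha_{\mu'_1}$ need not be dominant, so there may be no algebra $\RR^{\Lam'}(\beta-\alpha_{\mu'_1})$ to recurse on directly, even though the underlying combinatorics is unaffected.
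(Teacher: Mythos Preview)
Your proof is correct and takes a genuinely different route from the paper's.

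The paper argues directly with the quantities $N^\Lam(w,\mu,k)$: given $\mu\ne\nu$, it lets $a$ be the first position where they differ, finds $t_1>a$ with $\nu_{t_1}=\mu_a$, and shows by an explicit inequality that $N^\Lam(s_{t_1-1},s_{t_1-1}\nu,k)>0$ for every $k$ (the nontrivial case $k=t_1-1$ is bounded below by $N^\Lam(1,\mu,a)>0$), whence $e(s_{t_1-1}\nu)\RR^\Lam(\beta)e(\nu)\ne 0$; a double induction on the pair $(a,t_1-a)$ then finishes. In particular the entry being moved leftwards need not be a ``seed'' in your sense; validity at each swap is deduced by comparison with the known positivity at position $a$ in $\mu$.

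You instead (i) isolate once and for all the fact that $e(\mu)\ne0\ne e(s_a\mu)$ already forces $e(\mu)\RR^\Lam(\beta)e(s_a\mu)\ne0$, (ii) translate $e(\mu)\ne0$ into the ``search-ordering'' condition for the pair $(\Gamma,S)$, and (iii) prove connectivity of search orderings by always sliding a seed to position $1$ and inducting on $n$ in the purely combinatorial statement. Your recasting (ii) makes transparent why the result is insensitive to the super structure and to $\operatorname{char}\bk$, and your move in (iii) is a clean special case of the paper's move; the price is having to phrase the induction combinatorially via $(\Gamma',S')$, as you correctly note, since $\Lam-\alpha_{\mu'_1}$ need not be dominant. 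The paper's argument is shorter and stays inside the $N^\Lam$-formalism; yours exposes the underlying graph connectivity and would apply verbatim to any problem with the same search-ordering combinatorics.
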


\begin{proof} If $\mu=\nu$, then the proposition follows from Lemma \ref{Lm41}. In this case we set $a(\mu,\nu)=n, d(\mu,\nu)=0$. Now suppose $\mu\neq\nu$. Let $1\leq a\leq n$ be the minimal integer such that
$\mu_{a}\neq\nu_{a}$. Note that $\mu,\nu\in I^\beta$ and (\ref{assump1}) holds. Thus we have that $\nu_{t_1}=\mu_a$, where $a<t_1\leq n$, and $\mu_s=\nu_s$ for any $1\leq s\leq a-1$. In this case, we set $a(\mu,\nu):=a$, $d(\mu,\nu):=t_1-a$.
We use induction downwards on $a(\mu,\nu)$ and upwards on $d(\mu,\nu)$ to prove the proposition.

Since $e(\nu)\neq 0$, we have $N^\Lam(1,\nu,k)>0$ for any $1\leq k\leq n$. By assumption, $\nu_{t_1}=\mu_{a}$ and $t_{1}>a$. Then $$\begin{aligned}
&\quad\,N^\Lam(s_{t_{1}-1},s_{t_{1}-1}\nu,t_1-1)=N^\Lam(1,\nu,t_1)+\<\alpha_{\nu_{t_1-1}},h_{\nu_{t_1}}\>\\
&=\<\Lam,h_{\nu_{t_1}}\>-\sum_{1\leq j<t_1-1}\<\alpha_{\nu_j},h_{\nu_{t_1}}\>\\
&\geq\<\Lam,h_{\mu_a}\>-\sum_{1\leq j<a}\<\alpha_{\nu_j},h_{\mu_a}\>
=N^\Lam(1,\mu,a)>0 .
\end{aligned}
$$
For any $1\leq k\leq n$ with $k\neq t_1-1, t_1$, one can check by definition that $$
N^\Lam(s_{t_{1}-1},s_{t_{1}-1}\nu,k)=N^\Lam(1,\nu,k)>0,
$$
while $N^\Lam(s_{t_{1}-1},s_{t_{1}-1}\nu,t_1)=N^\Lam(1,\nu,t_1-1)>0$. Applying Corollary \ref{cor42}, we can deduce that $$
e(s_{t_{1}-1}\nu)\RR^\Lam(\beta)e(\nu)\neq 0 .
$$
In particular, $s_{t_{1}-1}\nu\sim\nu$. Note that $(s_{t_{1}-1}\nu)_{t_1-1}=\nu_{t_1}=\mu_a$ and $(s_{t_{1}-1}\nu)_s=\mu_s$ for any $1\leq s\leq a$. We are in a position to apply the induction hypothesis, which implies that $s_{t_{1}-1}\nu\sim\mu$. Thus $\mu\sim\nu$. This completes the proof of the proposition.
\end{proof}

\medskip
\noindent
{\bf{Proof of Theorem \ref{mainthmC} Part 2)}:} Set $I_0:=\{\mu\in I^\beta|e(\mu)\neq 0\}$. In view of Lemma \ref{lem43}, it suffices to show that for any proper subset $J\subsetneq I_0$, $\sum_{\mu\in J}e(\mu)$ is not a center element in $\RR^\Lam(\beta)$.

Suppose this is not the case. Let $J\subsetneq I_0$ be a proper subset of $I_0$  such that $0\neq e_J:=\sum_{\mu\in J}e(\mu)\in\mathcal{Z}$. Applying Proposition \ref{prop41} we can find $\nu\in J$ and $\nu'\in I_0\setminus J$ such that $e(\nu)\RR^\Lam(\beta)e(\nu')\neq 0$. We fix a nonzero element
$0\neq x\in e(\nu)\RR^\Lam(\beta)e(\nu')$. Then as $e(\mu)e(\mu')=\delta_{\mu,\mu'}e(\mu)$, we have $$
0=xe_J=e_J x=e(\nu)x=x,
$$
which is a contradiction! This completes the proof of Theorem \ref{mainthmC}.
\qed\medskip

\begin{rem}
In \cite[Conjecture 3.33]{SVV}, Shan, Varagnolo and Vasserot have conjectured for the usual cyclotomic quiver Hecke algebra, that $\dim\mathcal{Z}_0=1$ and hence $\RR^\Lam(\beta)$ is indecomposable for any
symmetrizable Cartan matrix under the assumption $\cha\bk=0$, and they proved the conjecture when $\mathfrak{g}$ is symmetric of finite type and $\cha\bk=0$. Our above theorem verifies this conjecture for the more general cyclotomic quiver Hecke superalgebra and those special $\beta$ but without any assumption on $\cha\bk$. The argument makes essentially use of our dimension formula (Corollary \ref{smaedimcor}). We hope this approach can be generalized to work for any $\beta\in Q_n^+$.
\end{rem}

\bigskip

\bigskip


\begin{thebibliography}{2}

\bibitem{BKM}
{\sc G.~Benkart, S.-J. Kang, D.~Melville}, {\em Quantized enveloping algebras for Borcherds superalgebras}, Trans. Amer. Math. Soc., {\bf 350} (1998), 3297--3319.

\bibitem{BK:GradedKL}
{\sc J.~Brundan and A.~Kleshchev},  {\em Blocks of cyclotomic {H}ecke algebras and {K}hovanov-{L}auda algebras}, Invent. Math., {\bf 178}
  (2009), 451--484.


\bibitem{EKL}
{\sc A.P.~Ellis, M.~Khovanov and A.D.~Lauda}, {\em The odd nilHecke algebra and its diagrammatics}, Int. Math. Res. Not., {\bf 16}, (2012), 991--1062.

\bibitem{HuL}
{\sc J.~Hu and X.f.~Liang}, {\em On the structure of cyclotomic nilHecke algebras}, Pac. J. Math., {\bf 296}(1) (2018), 105--139.

\bibitem{HM}
{\sc J.~Hu and A.~Mathas}, {\em Graded cellular bases for the cyclotomic Khovanov-Lauda-Rouquier algebras of type $A$}, Adv. Math., {\bf 225}(2) (2010), 598--642.

\bibitem{HuMathas:SeminormalQuiver}
\leavevmode\vrule height 2pt depth -1.6pt width 23pt,
{\em Seminormal forms and cyclotomic quiver {H}ecke algebras of type {A}}, Math. Ann.,
  \textbf{364} (2016), 1189--1254.

\bibitem{HS}
{\sc J.~Hu, L.~Shi}, {\em Graded dimensions and monomial bases for the cyclotomic quiver Hecke algebras}, preprint, {arXiv:2108.05508}, 2021.

\bibitem{HW}
{\sc D.~Hill and W.~Wang}, {\em Categorification of quantum Kac-Moody superalgebras}, Trans. Amer. Math. Soc. , {\bf 367} (1995),
1183--1216.

\bibitem{Kac}
{\sc V.G. Kac},
{{\em Infinite dimensional Lie algebras}, 3rd ed., Cambridge University Press, Cambridge, 1990.}

\bibitem{KK}
{\sc S.~J. Kang and M.~Kashiwara}, {\em Categorification of highest weight modules via Khovanov-Lauda-Rouquier algebras}, Invent. Math., {\bf 190} (2012), 699--742.

\bibitem{KKO1}
{\sc S.~J. Kang, M.~Kashiwara and S.~j Oh}, {\em Supercategorification of quantum Kac-Moody
algebras}, Adv. Math., {\bf 242} (2013), 116--162.

\bibitem{KKO2}
\leavevmode\vrule height 2pt depth -1.6pt width 23pt,  {\em Supercategorification of quantum Kac-Moody
algebras II}, Adv. Math., {\bf 265} (2014), 169--240.


\bibitem{KKT}
{\sc S.~J. Kang, M.~Kashiwara and S.~Tsuchioka}, {\em Quiver Hecke Superalgebras}, J. Reine Angew. math., {\bf 711} (2016), 1--54.

\bibitem{KW}
{\sc T.A.~Khongsap and W.~Wang}, {\em Hecke-Clifford algebras and spin Hecke algebras I, The classical affine type},
Transform. Groups,  {\bf 13} (2008), 389--412.

\bibitem{KL1}
{\sc M.~Khovanov and A.D.~Lauda}, {\em A diagrammatic approach to categorification of quantum groups, I}, Represent. Theory, {\bf 13} (2009), 309--347.

\bibitem{KL2}
\leavevmode\vrule height 2pt depth -1.6pt width 23pt,  {\em A diagrammatic approach to categorification of quantum groups, II}, Trans. Amer. Math. Soc., {\bf 363} (2011), 2685--2700.

\bibitem{Klesh:book}
{\sc A.~S. Kleshchev}, {\em Linear and projective representations of symmetric groups}, CUP, 2005.

\bibitem{Kle2}
\leavevmode\vrule height 2pt depth -1.6pt width 23pt,  {\em Representation theory of symmetric groups and related Hecke algebras},
Bull. Amer. Math. Soc. (N.S.), {\bf 47} (2010), 419--481.

\bibitem{Lu}
{\sc G.~Lusztig},  {\em Introduction to Quantum groups}, Birkh\"auser, 1994.



\bibitem{Rou1}
{\sc R.~Rouquier}, {\em $2$-Kac--Moody algebras}, preprint, math.RT/0812.5023v1, 2008.

\bibitem{Rou2}
\leavevmode\vrule height 2pt depth -1.6pt width 23pt, {\em Quiver Hecke algebras and 2-Lie algebras}, Algebr. Colloq. {\bf 19} (2012), 359--410.


\bibitem{SVV}
{\sc P.~Shan, M.~Varagnolo and E.~Vasserot}, {\em On the center of quiver-Hecke algebras}, Duke Math. J., {\bf 166}(6) (2017), 1005--1101.

\bibitem{VV}
{\sc M.~Varagnolo and E.~Vasserot}, {\em Canonical bases and KLR algebras}, J. reine angew. Math., {\bf 659} (2011), 67--100.

\bibitem{W}
{\sc W.~Wang}, {\em Double affine Hecke algebras for the spin symmetric group}, Math. Res.
Lett., {\bf 16} (2009), 1071--1085.

\bibitem{Web}
{\sc B.~Webster}, {\em Knot invariants and higher representation theory},  Memoirs of the American Mathematical Society, {\bf 250}, (2017).


\end{thebibliography}
\end{document}